\documentclass[leqno, 10pt, a4paper]{amsart}
\usepackage{amsmath}
\usepackage{amssymb, latexsym, mathrsfs, a4wide, mathabx, dsfont}
\usepackage{xcolor}

\pagestyle{plain}

 \newtheorem{theorem}{Theorem}[section]
 \newtheorem{lemma}[theorem]{Lemma}
 \newtheorem{proposition}[theorem]{Proposition}
 \newtheorem{corollary}[theorem]{Corollary}

 \newtheorem*{theorem*}{Theorem}
\newtheorem*{proposition*}{Proposition}
\newtheorem*{lemma*}{Lemma}

\theoremstyle{definition}
 \newtheorem{definition}[theorem]{Definition}

 \theoremstyle{remark}
 \newtheorem{example}[theorem]{Example}
 \newtheorem{remark}[theorem]{Remark}

   \newtheorem*{claim*}{Claim}

\newcommand{\op}[1]{\operatorname{#1}}


\newcommand{\acou}[2]{\ensuremath{\left\langle #1 , #2 \right\rangle}} 
\newcommand{\acout}[2]{\ensuremath{\langle #1 , #2\rangle}} 
 
\newcommand{\scal}[2]{\ensuremath{\left\langle #1 | #2 \right\rangle}} 
\newcommand{\scalt}[2]{\ensuremath{\langle #1 | #2 \rangle}} 

\newcommand{\acoup}[2]{\ensuremath{\left(#1,#2\right)}}

\newcommand{\Tr}{\ensuremath{\op{Tr}}}

\def\XXint#1#2#3{{\setbox0=\hbox{$#1{#2#3}{\int}$}
\vcenter{\hbox{$#2#3$}}\kern-.5\wd0}}


\newcommand{\GL}{\op{GL}}

\newcommand{\C}{\ensuremath{\mathbb{C}}} 
 
\newcommand{\N}{\ensuremath{\mathbb{N}}} 
 
\newcommand{\R}{\ensuremath{\mathbb{R}}} 
 
\newcommand{\T}{\ensuremath{\mathbb{T}}} 
\newcommand{\Z}{\ensuremath{\mathbb{Z}}}


\newcommand{\fS}{\ensuremath{\mathfrak{S}}}

\newcommand{\Ca}[1]{\ensuremath{\mathcal{#1}}}
\newcommand{\cA}{\Ca{A}}
\newcommand{\cB}{\Ca{B}}

\newcommand{\cE}{\Ca{E}}

\newcommand{\cH}{\ensuremath{\mathcal{H}}}

\newcommand{\cL}{\ensuremath{\mathcal{L}}}

\newcommand{\cX}{\mathscr{X}}







\newcommand{\Hom}{\op{Hom}}

\newcommand{\dom}{\op{dom}}

\newcommand{\Sp}{\op{Sp}}
\newcommand{\Vol}{\op{Vol}}

\newcommand{\opp}{\textup{o}}

\newcommand{\dl}{\partial}
\newcommand{\dive}{\op{div}}

\numberwithin{equation}{section}

\begin{document}

\title{Laplace-Beltrami Operators on Noncommutative Tori}

\author{Hyunsu Ha}
 \address{Department of Mathematical Sciences, Seoul National University, Seoul, South Korea}
 \email{mamaps@snu.ac.kr}

\author{Rapha\"el Ponge}
 \address{School of Mathematics, Sichuan University, Chengdu, China}
 \email{ponge.math@icloud.com}

 \thanks{The research for this article was partially supported by 
  NRF grants 2013R1A1A2008802 and 2016R1D1A1B01015971 (South Korea).}
 
\begin{abstract}
 In this paper, we construct Laplace-Beltrami operators associated with arbitrary Riemannian metrics on noncommutative tori of any dimension. These operators enjoy the main properties of the Laplace-Beltrami operators on ordinary Riemannian manifolds. The construction takes into account the non-triviality of the group of modular automorphisms. On the way we introduce notions of Riemannian density and Riemannian volumes for noncommutative tori. \end{abstract}

\maketitle 

\section{Introduction}
Noncommutative tori are among the most well known examples of noncommutative spaces. For instance, noncommutative 2-tori arise from actions on circles by irrational rotations. Following seminal work of Connes-Tretkoff~\cite{CT:Baltimore11} and Connes-Moscovici~\cite{CM:JAMS14}, a very active current trend in noncommutative geometry is the building of a differential geometric apparatus on noncommutative tori that take into account 
the non-triviality of their modular automorphism groups (see, e.g., \cite{CF:MJM19, CM:JAMS14, CT:Baltimore11, DS:SIGMA15, DGK:arXiv18, Fa:JMP15, FK:JNCG12, FK:LMP13, FK:JNCG13, FK:JNCG15, FGK:JNCG19, LM:GAFA16, Liu:arXiv18a, Liu:arXiv18b, Ro:SIGMA13}). So far the main focus has been mostly on conformal deformations of the flat Euclidean metric or products of such metrics. There is a general notion of general metric on noncommutative tori due to Rosenberg~\cite{Ro:SIGMA13}. In particular, the analogue of Levi-Civita theorem holds (see~\cite{Ro:SIGMA13}). Numerous important results in Riemannian geometry and spectral geometry arise from the analysis of the Laplace-Beltrami operator acting on functions on a closed Riemannian manifolds. Therefore, it seems only timely to construct Laplace-Beltrami operators associated with arbitrary Riemannian metrics. This is precisely the main goal of this article. 

We refer to Section~\ref{sec:Laplace-Beltrami} for the full details of the construction of the Laplace-Beltrami operators on noncommutative tori. The construction is actually carried out for arbitrary Hermitian metrics and positive densities on noncommutative tori, which is a bit more general. Anyway, the point is that the Laplace-Beltrami operators on noncommutative tori satisfy the main classical properties of Laplace-Beltrami operators on closed manifolds. In particular, they are selfadjoint elliptic differential operators (see Proposition~\ref{prop:Laplacian.positivity-ellipticity} and Proposition~\ref{prop:Laplacian.spectral-theory}). As a result, they enjoy the usual elliptic regularity properties (Proposition~\ref{prop:Laplacian.elliptic-regularity}). At the spectral level, the spectra are unbounded discrete sets that consist of eigenvalues with finite multiplicities (see Proposition~\ref{prop:Laplacian.spectral-theory}) and we have orthonormal bases of smooth eigenvectors (Proposition~\ref{prop:Laplacian.eigenbasis}). We also describe the transformation of Laplace-Beltrami operators under conformal changes of metrics (see Proposition~\ref{prop:Laplacian.conformal-transformation}). In particular, in dimension~2 we obtain a conformal covariance which is the very analogue of the well known conformal covariance of the Laplace-Beltrami operator on Riemannian surfaces.  

Denoting by $\cA_\theta$ the (smooth) noncommutative torus associated with a skew-symmetric $n\times n$ matrix $\theta$, the corresponding space of vector fields $\cX_\theta$ is the left $\cA_\theta$-module generated by the canonical derivations $\dl_1, \ldots, \dl_n$ of $\cA_\theta$. The Hermitian metrics on $\cX_\theta$ are in one-to-one correspondence with positive invertible element matrices  $h=(h_{ij})$ in $M_n(\cA_\theta)$. A Riemannian metric is given by such a matrix $g=(g_{ij})$ with the further requirement that its entries and those of its inverse $g^{-1}=(g^{ij})$ are selfadjoint.  A class of Riemann metrics of special interest consists of the self-compatible metrics $g=(g_{ij})$ for which the entries $g_{ij}$ mutually commute with each other. This includes the conformal deformations of the Euclidean flat metric considered by many authors and the functional metrics of~\cite{GK:arXiv18}. We refer to Section~\ref{sec:Riem} for a more detailed account on Riemannian metrics on noncommutative tori. 

As mentioned above the Laplace-Beltrami operators on noncommutative tori enjoy the main properties of their counter-parts on ordinary manifolds. However, their constructions bear a few differences due to the noncommutativity of noncommutative tori. The main influx of this noncommutativity concerns the construction of the relevant inner products involved in the definition of the Laplace-Beltrami operators. In the commutative case the relevant inner products are constructed out of Hermitian metrics and positive densities. The latter are given by integration against positive functions. Analogously, a positive density on $\cA_\theta$ is given by a positive invertible element $\nu\in \cA_\theta$ and its associated weight,   
\begin{equation}
 \varphi_{\nu}(u)= (2\pi)^{n}\tau[ u\nu], \qquad u\in \cA_\theta, 
 \label{eq:Intro.weight}
\end{equation}
where $\tau$ is the (standard) normalized trace of $\cA_\theta$. This allows us to carry out the GNS construction and get a $*$-representation of $\cA_\theta$ in the Hilbert space $\cH_\nu$ that arises from the completion of $\cA_\theta$ with respect to the inner product, 
\begin{equation*}
 \scal{u}{v}_\nu := (2\pi)^{-n} \varphi_\nu(v^*u)= \tau (u\nu v^*), \qquad u,v\in \cA_\theta. 
\end{equation*}

Due to the noncommutativity of $\cA_\theta$, the weight $\varphi_\nu$ is not a trace (unless $\nu$ is a scalar). As a result the right multiplication of $\cA_\theta$ does not provide us with a $*$-representation on $\cH_\nu$ of the opposite algebra $\cA_\theta^\opp$, i.e., we have a non-trivial group of modular automorphisms. As in~\cite{CM:JAMS14, CT:Baltimore11} this is fixed by using the inner automorphism,
\begin{equation*}
 \sigma_\nu(u) = \nu^{\frac12} u \nu^{-\frac12}, \qquad u\in \cA_\theta. 
\end{equation*}
In other words, we obtain a $*$-representation of $\cA_\theta^\opp$ in the Hilbert space $\cH_\nu^\opp$ given by the completion with respect to the inner product, 
\begin{equation}
 \scal{u}{v}_\nu^\opp:= \varphi_\nu(uv^*)= \scal{\sigma_\nu(v)}{\sigma_\nu(v)}_\nu, \qquad u,v\in \cA_\theta.
 \label{eq:Intro.inner-product} 
\end{equation}
In particular, $\mathbf{\Delta}(u)=\sigma_\nu^2(u)=\nu u \nu^{-1}$ and $J(u)=\sigma_\nu(u^*)$ are the modular operator and anti-linear involution provided by Tomita-Takesaki theory.  In some sense, the bulk of the construction of the Laplace-Beltrami operators is the accounting of the modular automorphism group. In fact, this accounting is necessary in order to obtain Laplace-Beltrami operators that are differential operators.

The space of differential forms $\Omega_\theta^1$ on $\cA_\theta$ is defined as the dual module of the module of vector fields $\cX_\theta$. As $\cX_\theta$ is a left $\cA_\theta$-module, we obtain a right $\cA_\theta$-module and the canonical derivations $\dl_1,\ldots, \dl_n$ give rise to a dual basis $\theta^1, \ldots, \theta^n$ of $\Omega_\theta^1$. The differential $d:\cA_\theta \rightarrow \Omega_\theta^1$ is then given by
\begin{equation*}
 du = \theta^1 \dl_1(u)+\cdots + \theta^n \dl_n(u), \qquad u\in \cA_\theta. 
\end{equation*}
By duality any Hermitian metric $h=(h_{ij})$ gives rise a Hermitian metric $(h^{-1})^t=(h^{ji})$ on $\Omega_\theta^1$. In addition, the basis $\theta^1,\ldots, \theta^n$ gives rise to a left $\cA_\theta$-module structure on $\Omega^1_\theta$ (see~Section~\ref{sec:1-forms}). Given any positive density $\nu\in\cA_\theta$, this allows us to lift the modular inner automorphism $\sigma_\nu$ to $\Omega_\theta^1$. Much in the same way as in~(\ref{eq:Intro.inner-product}), we then can equip $\Omega_\theta^1$ with the inner product, 
\begin{equation*}
 \scal{\omega}{\zeta}_{h,\nu}^\opp = \varphi_\nu\left[\acoup{\sigma_\nu(\omega)}{\sigma_\nu(\zeta)}\right], \qquad \omega, \zeta\in \Omega^1_\theta. 
\end{equation*}

There is a natural notion of divergence of vector fields in $\cX_\theta$ associated with the positive density $\nu$ (see Section~\ref{sec:1-forms}). By duality this gives rise to a divergence operator $\delta:\Omega_\theta^1 \rightarrow \cA_\theta$. As usual, the divergence operator yields the formal adjoint of the differential $d$ on $\Omega^1_\theta$; in our setting this is with respect to the inner products $\scal{\cdot}{\cdot}_\nu^\opp$ and $\scal{\cdot}{\cdot}_{h,\nu}^\opp$ described above (see Proposition~\ref{prop:1-forms.divergence}). The Laplace-Beltrami operator $\Delta_{h,\nu}:\cA_\theta \rightarrow \cA_\theta$ is then defined by
\begin{equation}
 \Delta_{h,\nu} u = -\delta (du), \qquad u \in \cA_\theta. 
 \label{eq:Intro.Laplace-Beltrami1}
\end{equation}
This is the usual definition of the Laplace-Beltrami operator (up to the sign convention). It can be further shown (\emph{cf}.\ Proposition~\ref{prop:Laplacian.positivity-ellipticity}) that we have
\begin{equation}
  \Delta_{h,\nu} u = - \nu^{-1} \sum_{1\leq i,j \leq n} \dl_i \big( \sqrt{\nu} h^{ij} \sqrt{\nu} \dl_j(u)\big), \qquad u\in \cA_\theta. 
   \label{eq:Intro.Laplace-Beltrami2}
\end{equation}
This is the usual formula for the Laplace-Beltrami operator, except for the replacement of the term $ h^{ij}\nu$ by $\sqrt{\nu} h^{ij} \sqrt{\nu}$, which accounts for the noncommutativity of $\cA_\theta$.

In order to define the Laplace-Beltrami operator associated with a Riemannian metric we only need to define the analogue of the Riemannian density. 
If $g=(g_{ij}(x))$ is a Riemannian metric on an ordinary torus, or more generally on any ordinary manifold, the Riemannian density is given $\nu_g(x)=\sqrt{\det(g(x)}$. Thus, in order to define the analogue of the Riemannian density for $\cA_\theta$ we need a notion of determinant for positive invertible elements of $M_n(\cA_\theta)$. 
As $\cA_\theta$ is not a commutative ring, we cannot define the determinant in the usual way. Following the approach of Fuglede-Kadison~\cite{FK:AM52}, we define the determinant of a positive invertible matrix $h=(h_{ij})$ in $M_n(\cA_\theta)$ by
\begin{equation*}
 \det(h) = \exp\left( \Tr \big[ \log h\big] \right),
\end{equation*}
where $\log h \in M_n(\cA_\theta)$ is defined by holomorphic functional calculus and $\Tr: M_n(\cA_\theta) \rightarrow \cA_\theta$ is the usual  matrix ``trace'' on $M_n(\cA_\theta)$. This defines a positive invertible element of $\cA_\theta$. When $h$ is self-compatible we recover the usual determinant (see Corollary~\ref{cor:det.self-comp}). We refer to Section~\ref{sec:determinant}, for a description of the main properties of this determinant. We stress that, due to the noncommutativity of $\cA_\theta$, the matrix trace $\Tr$ is not actually a trace, and so this determinant cannot be multiplicative (compare~\cite{FK:AM52}). However, we still have some instances of multiplicativity (see Proposition~\ref{prop:det.product} and Proposition~\ref{prop:det.conjugaison}). 

Given any Riemannian metric $g=(g_{ij})$ on $\cA_\theta$, its determinant $\det(g)$ is a positive element of $\cA_\theta$. This allows us to define the \emph{Riemannian density} by  
\begin{equation*}
 \nu(g):= \sqrt{\det(g)}. 
\end{equation*}
As in~(\ref{eq:Intro.weight}) this gives rise to a weight $\varphi_g:=\varphi_{\nu(g)}$, called Riemannian weight. The \emph{Riemannian volume} then is
\begin{equation*}
 \Vol_g(\cA_\theta):=\varphi_g(1)= (2\pi)^n\tau\left[ \nu(g)\right]. 
\end{equation*}
We refer to Section~\ref{sec:Riem-volume} for the main properties of Riemannian densities and Riemannian volumes and the computations of a few examples.  

Given a Riemannian metric $g=(g_{ij})$ on $\cA_\theta$, its Laplace-Beltrami operator $\Delta_g:\cA_\theta \rightarrow \cA_\theta$ is the Laplace-Beltrami operator $\Delta_{h,\nu}$ given by~(\ref{eq:Intro.Laplace-Beltrami1})--(\ref{eq:Intro.Laplace-Beltrami2}) with $h^{ij}=g^{ij}$ and $\nu=\nu(g)$, where $g^{ij}$ are the entries of $g^{-1}$. In particular, when $g$ is a self-compatible metric, it can be shown that we have
\begin{equation*}
\Delta_gu= - \nu(g)^{-1} \sum_{1\leq i,j \leq n} \dl_i \big(  g^{ij} \nu(g) \dl_j(u)\big), \qquad u\in \cA_\theta.
\end{equation*}
This is the full analogue of the usual formula for the Laplace-Beltrami operator on a Riemannian manifold. When $n=2$ and $g$ is a conformal deformation of the Euclidean flat metric we recover the conformally deformed Laplacian of Connes-Tretkoff~\cite{CT:Baltimore11} up to unitary equivalence (see Example~\ref{ex:Laplacian.conformal-flat}). 

As mentioned above, the Laplace-Beltrami operators on noncommutative tori enjoy the main properties of Laplace-Beltrami operators on ordinary Riemannian manifolds. Therefore, it stands for reason to expect that many of well-known spectral geometry results related to Laplace-Beltrami operators on Riemannian manifolds should have analogues for noncommutative tori. Instances of such results are analogues for noncommutative tori of Weyl' law and its local and microlocal versions (see~\cite{Po:Weyl}  and Remark~\ref{rmk:Laplacian.Weyl}). In addition, it would be important to extend the definition of the differential $d$ and the Laplace-Beltrami operator to operators on differential forms of any degree. That is, we seek for a de Rham complex and Hodge Laplacians on noncommutative tori so as to have a full Hodge theory on noncommutative tori. Note that, here again, the noncommutativity of noncommutative tori prevent us form defining higher degree differential forms in the usual way. 

This paper is organized as follows. In Section~\ref{sec:NCtori}, we review the main definitions regarding noncommutative tori. In Section~\ref{sec:positivity}, 
we review some facts on positive elements and Hermitian modules in the setting of smooth $*$-algebras. In Section~\ref{sec:Riem}, after setting up the definition of a Riemannian metric for noncommutative tori, we exhibit a few examples and introduce a notion of conformal equivalence for such metrics.  
In Section~\ref{sec:densities}, after discussing the noncommutative analogues of 
positive densities on noncommutative tori, we explain how this enables us to define Hermitian inner products on Hermitian modules over noncommutative tori. In Section~\ref{sec:determinant},
we define the determinant of positive invertible matrices in $M_m(\cA_\theta)$, establish some of its properties, and compute some examples. 
In Section~\ref{sec:Riem-volume}, we introduce Riemannian densities and Riemann volumes for arbitrary Riemannian metrics on noncommutative toris and check their main properties. 
In Section~\ref{sec:1-forms}, we describe the bimodule of differential 1-forms on noncommutative toris and introduce a divergence operator on this bimodule. 
In Section~\ref{sec:Laplace-Beltrami}, we define and study the main properties of Laplace-Beltrami operators associated with 
arbitrary Hermitian metrics and positive densities on noncommutative tori. 

\subsection*{Acknowlegements} The authors would like to thank Yang Liu, Ed McDonald, Fedor Sukochev, and Dmitriy Zanin for discussions related to the subject matter of this paper. R.P.~also thanks University of New South Wales (Sydney, Australia) and University of Qu\'ebec at Montr\'eal (Canada)  for their hospitality during the preparation of this manuscript.

\section{Noncommutative Tori}\label{sec:NCtori}
In this section, we recall the main definitions and properties of noncommutative tori. We refer to~\cite{Co:NCG, HLP:Part1, Ri:PJM81, Ri:CM90} and the references therein for a more comprehensive account.  

Throughout this chapter, we let $\theta =(\theta_{jk})$ be a real anti-symmetric $n\times n$-matrix ($n\geq 2$). Let $\T^n=\R^n\slash (2\pi \Z)^n$ be the ordinary $n$-torus and $L^2(\T^n)$ the Hilbert space of $L^2$-functions on $\T^n$ equipped with the inner product, 
\begin{equation} 
 \scal{\xi}{\eta}= (2\pi)^{-n}\int_{\T^n}\xi(x) \overline{\eta(x)} d x, \qquad \xi, \eta \in L^2(\T^n). 
 \label{eq:NCtori.innerproduct-L2} 
\end{equation}
 For $j=1,\ldots, n$ let $U_j:L^2(\T^n)\rightarrow L^2(\T^n)$ be the unitary operator given by 
 \begin{equation*}
 \left( U_j\xi\right)(x)= e^{ix_j} \xi\left( x+\pi \theta_j\right), \qquad \xi \in L^2(\T^n), 
\end{equation*}
where $\theta_j$ is the $j$-th column vector of $\theta$. We then have the relations, 
 \begin{equation*}
 U_kU_j = e^{2i\pi \theta_{jk}} U_jU_k, \qquad j,k=1, \ldots, n. 
\end{equation*}
The \emph{noncommutative torus} $A_\theta$  is the $C^*$-algebra generated by the unitaries $U_1, \ldots, U_n$. 

A dense subspace of $A_\theta$ is the span $\cA_\theta^0$ of the unitaries, 
\begin{equation*}
 U^k:=U_1^{k_1} \cdots U_{n}^{k_n}, \qquad k=(k_1, \ldots, k_n)\in \Z^n. 
\end{equation*}
When $\theta=0$ each unitary $U_j$ is simply the multiplication by $e^{ix_j}$. Thus, in this case $\cA_\theta^0$ is the space of trigonometric polynomials and $A_\theta$ is precisely the $C^*$-algebra of continuous functions on $\T^n$. 

The action of $\R^n$ on $\T^n$ by translation yields a unitary representation $s\rightarrow V_s$ of $\R^n$ given by
\begin{equation*}
\left( V_s \xi\right)(x)=\xi(x+s), \qquad \xi \in L^2(\T^n), \ s\in \R^n. 
\end{equation*}
We then get an action $(s,T)\rightarrow \alpha_s(T)$ of $\R^n$ on $\cL(L^2(\T^n))$ given by 
\begin{equation*} 
 \alpha_s(T)=V_s TV_s^{-1}, \qquad T\in \cL(L^2(\T^n)), \ s\in \R^n. 
\end{equation*}
In particular, we have
\begin{equation} 
\alpha_s(U^k)= e^{is\cdot k} U^k, \qquad  k\in \Z^n. 
\end{equation}
This last property implies that the $C^*$-algebra $A_\theta$ is preserved by the action of $\R^n$. In fact, the induced action on $A_\theta$ is continuous, and so  the triple $(A_\theta, \R^n, \alpha)$ is a $C^*$-dynamical system.

The \emph{smooth noncommutative torus} $\cA_\theta$ is the sub-algebra of smooth elements of the $C^*$-dynamical system $(A_\theta, \R^n, \alpha)$. That is, 
\begin{equation*}
 \cA_\theta:=\left\{ u \in A_\theta; \ \alpha_s(u) \in C^\infty(\R^n; A_\theta)\right\}. 
\end{equation*}
In particular, all the unitaries $U^k$, $k\in \Z^n$, are contained in $\cA_\theta$. When $\theta=0$ we recover the algebra of smooth function on $\T^n$. The action of $\R^n$ on $A_\theta$ is infinitesimally generated by the canonical derivations $\dl_1, \ldots, \dl_n$ given by
\begin{equation*}
 \partial_j(u) = \partial_{s_j} \alpha_s(u)|_{s=0}, \qquad u\in \cA_\theta,  
\end{equation*}
In terms of the generators $U_1, \ldots, U_n$ we have 
 \begin{equation} 
 \partial_j(U_j)=iU_j \qquad \text{and} \qquad  \partial_j(U_k)=0 \ \text{when $j\neq k$}. 
  \label{eq:NCtori.delta-U}
\end{equation}
Moreover, we have 
\begin{equation}
  \partial_j(u^*)=\partial_j(u)^*, \qquad u\in \cA_\theta. 
    \label{eq:NCtori.derivation-involution} 
\end{equation}

The derivations $\dl_1, \ldots, \dl_n$ commute with each other. For $\alpha\in \N_0^n$ set $\dl^\alpha = \dl_1^{\alpha_1} \cdots \dl_n^{\alpha_n}$. The semi-norms $u \rightarrow \|\dl^\alpha(u)\|$ generate a locally convex space topology on $\cA_\theta$. This turns $\cA_\theta$ into a Fr\'echet $*$-algebra. This is even a \emph{good} Fr\'echet algebra in the sense that the set of invertible elements is open and the inverse map $u \rightarrow u^{-1}$ is continuous. As a result $\cA_\theta$ is closed under holomorphic functional calculus. This is even a pre-$C^*$-algebra in the sense of~\cite{GVF:Birkh01}, and so all the matrix algebras $M_m(\cA_\theta)$ are closed under holomorphic functional calculus as well. 

Let $\tau:\cL(L^2(\T^n))\rightarrow \C$ be the state defined by the constant function $1$, i.e., 
 \begin{equation*}
 \tau (T)= \scal{T1}{1}=(2\pi)^{-n}\int_{\T^n} (T1)(x) d x, \qquad T\in \cL\left(L^2(\T^n)\right).
\end{equation*}
In particular, we have
 \begin{equation*} 
\tau(1)=1, \qquad \tau(U^k)=0 \ \text{when $k\neq 0$}.  
 \end{equation*}
This induces a tracial state on the $C^*$-algebra $A_\theta$. Moreover (see, e.g.,~\cite{Ro:APDE08}), for $j=1,\ldots, n$,  we have
 \begin{gather}
 \tau\left[ \partial_j(u)\right] = 0 \qquad \forall u\in \cA_\theta, \label{eq:NCtori.closed-trace} \\
 \tau\left[ u\partial_j(v)\right] =- \tau\left[ \partial_j(u)v\right] \qquad \forall u,v\in \cA_\theta. 
 \label{eq:NCtori.integration-by-parts}
\end{gather}

Let $\scal{\cdot}{\cdot}$ be the sesquilinear form on $A_\theta$ defined by
\begin{equation}
 \scal{u}{v} = \tau\left(v^*u\right), \qquad u,v\in A_\theta. 
 \label{eq:NCtori.cAtheta-innerproduct}
\end{equation}
The family $\{ U^k; k \in \Z^n\}$ is orthonormal with respect to this sesquilinear form, and so we obtain a pre-inner product on the dense subspace $\cA_\theta^0$. The Hilbert space $\cH_\theta$ is the completion of $\cA_\theta^0$ with respect to this pre-inner product. Moreover, the multiplication of $\cA_\theta^0$ uniquely extends to a continuous bilinear map $A_\theta\times \cH_\theta \rightarrow \cH_\theta$ which provides us with a unital $*$-representation of $A_\theta$ into $\cL(\cH_\theta)$. This is precisely the GNS representation of $A_\theta$ associated with $\tau$. 

As $\{ U^k; k \in \Z^n\}$ is an orthonormal (Hilbert) basis, for every $u\in \cH_\theta$, we have a unique series decomposition in $\cH_\theta$, 
\begin{equation}
 u = \sum_{k \in \Z^n} u_k U^k, \qquad u_k:=\scal{u}{U^k}=\tau\big[(U^k)^*u\big]. 
 \label{eq:NCTori.Fourier-series}
\end{equation}
When $\theta=0$ we recover the usual Fourier series decomposition in $L^2(\T^n)$. The inclusion of $\cA_\theta^0$ into $\cH_\theta$ uniquely extends to continuous inclusions of $\cA_\theta$ and $A_\theta$ into $\cH_\theta$. In terms of the series decomposition~(\ref{eq:NCTori.Fourier-series}) the elements of $\cA_\theta$ are exactly the elements of $\cH_\theta$ for which the sequence of coefficients $(u_k)_{k\in \Z^n}$ has rapid decay. This gives a very concrete description of $\cA_\theta$. 

\section{Positivity and Hermitian Modules}\label{sec:positivity}
In this section, we review some facts on positive elements and Hermitian modules in the setting of smooth algebras. This also includes a discussion of Hermitian free modules over noncommutative tori. 

\subsection{Positivity and holomorphic functional calculus} In what follows we let  $\cA$ be a unital $*$-subalgebra of some $C^*$-algebra $A$ such that $\cA$ is closed under holomorphic functional calculus (e.g., $\cA$ is a pre-$C^*$-algebra in the sense of~\cite{GVF:Birkh01}). This implies that the invertible group $\cA^{-1}$ of $\cA$ agrees with $A^{-1}\cap\cA$, and so for all $x\in \cA$, we have 
\begin{equation}
 \Sp(x) = \{\lambda \in \C;\ x-\lambda\not\in A^{-1}\}= \{\lambda \in \C;\ x-\lambda\not\in \cA^{-1}\}.
 \label{eq:Pos.spectrum} 
\end{equation}

Let $A^+$  be the cone of positive elements of $A$, i.e., selfadjoint elements with non-negative spectrum. Recall that 
\begin{equation*}
 A^+= \{x^*x; \ x \in A\} = \{x^2;\ x \in A, \ x^*=x\}. 
\end{equation*}
As usual, given $x,y\in A$, we shall write $x\geq y$ when $x-y\in A^+$. In particular, $x\geq 0$ iff $x\in A^+$. More generally, if $c\in \R$, then $x\geq c$ iff $x$ is selfadjoint and its spectrum is contained in $[c,\infty)$. 

In what follows we denote by $\cA^{++}$ the set of positive elements of $\cA$ that are invertible. Thus, in view of~(\ref{eq:Pos.spectrum}), we have 
\begin{equation}
 \cA^{++}= \cA^{-1} \cap A^{+} = \{x\in \cA;\  x^*=x \ \text{and}\ \Sp(x)\subset (0,\infty)\}.
 \label{eq:Pos.cA++} 
\end{equation}

\begin{lemma}\label{lem:Pos.c A++-holomorphic}
 Let $x$ be a normal element of $\cA$.
\begin{enumerate}
   \item[(i)] If $f(z)$ is any given  holomorphic function near $\Sp(x)$ such that $f(\Sp(x))\subset (0,\infty)$, then $f(x)\in \cA^{++}$. 
    
    \item[(ii)] Assume that $\Sp(x)\subset (c_1, c_2)$, $c_i\in \R$. Then there are $y_1$ and $y_2$ in $\cA$ such that
    \begin{equation*}
         c_1+y_1^*y_1 = x= c_2-y_2^*y_2.     
    \end{equation*}
 \end{enumerate}
\end{lemma}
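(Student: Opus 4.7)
The plan is to derive (i) directly from the properties of the holomorphic and continuous functional calculi for normal elements, and then obtain (ii) as a corollary by feeding well-chosen holomorphic functions into (i).

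For (i), I would first use that $\cA$ is closed under holomorphic functional calculus to ensure $f(x)\in \cA$. The spectral mapping theorem yields $\Sp(f(x))=f(\Sp(x))\subset (0,\infty)$; in particular $0\notin \Sp(f(x))$, so $f(x)$ is invertible in $A$, and hence in $\cA$ by~(\ref{eq:Pos.spectrum}). In view of the characterization of $\cA^{++}$ in~(\ref{eq:Pos.cA++}), it only remains to check that $f(x)$ is selfadjoint. This is where normality of $x$ enters: the continuous functional calculus provides a $*$-homomorphism $C(\Sp(x))\to A$ which agrees with the holomorphic functional calculus on germs of holomorphic functions near $\Sp(x)$. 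Since $f$ takes real (in fact, positive) values on $\Sp(x)$, its restriction $f|_{\Sp(x)}$ coincides with its complex conjugate, and applying the $*$-structure of the continuous calculus gives $f(x)^*=f(x)$. Hence $f(x)\in \cA^{++}$.

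For (ii), I would first observe that $x$ itself is selfadjoint: since $x$ is normal with $\Sp(x)\subset (c_1,c_2)\subset \R$, the continuous functional calculus applied to the identity function $z\mapsto z$ yields $x^*=x$. On any simply connected open neighborhood of $[c_1,c_2]$ that avoids $(-\infty, c_1]$, the two functions $z\mapsto \sqrt{z-c_1}$ and $z\mapsto \sqrt{c_2-z}$ are holomorphic and take values in $(0,\infty)$ when restricted to $(c_1,c_2)$. Applying (i) to these functions evaluated at $x$, I set
\begin{equation*}
 y_1:=\sqrt{x-c_1}\et y_2:=\sqrt{c_2-x}.
\end{equation*}
Both lie in $\cA^{++}$, and in particular are selfadjoint, so $y_i^*y_i=y_i^2$. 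Multiplicativity of the functional calculus gives $y_1^2=x-c_1$ and $y_2^2=c_2-x$, from which the desired identities $c_1+y_1^*y_1=x=c_2-y_2^*y_2$ follow.

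The main obstacle is the selfadjointness assertion in~(i): the holomorphic functional calculus by itself does not distinguish real-valued from complex-valued holomorphic data and so cannot produce selfadjoint elements without additional input. The crux is the compatibility between the holomorphic and continuous functional calculi for normal elements, which is where the normality hypothesis on $x$ is indispensable; once this is accepted, part (ii) reduces to a routine application of (i) with explicit holomorphic square roots.
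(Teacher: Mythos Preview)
Your proof is correct and follows essentially the same approach as the paper's. The paper is terser in part~(i), simply noting that $f(x)$ is normal with positive spectrum and then invoking~(\ref{eq:Pos.cA++}); you make explicit the step---via compatibility with the continuous functional calculus---that a normal element with real spectrum is selfadjoint, which the paper leaves implicit. One small slip in part~(ii): you speak of a simply connected neighborhood of $[c_1,c_2]$ that avoids $(-\infty,c_1]$, which is impossible since $c_1$ lies in both sets; you mean a neighborhood of the compact set $\Sp(x)\subset (c_1,c_2)$ (and for $\sqrt{c_2-z}$ the relevant branch cut is along $[c_2,\infty)$).
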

\begin{proof}
 Let $f(z)$ be a  holomorphic function near $\Sp(x)$ such that $f(\Sp(x))\subset (0,\infty)$. Then $f(x)$ is normal and has positive spectrum. Moreover, $f(x)\in \cA$ since $\cA$ is closed under holomorphic functional calculus. It then follows from~(\ref{eq:Pos.cA++}) that $f(x)\in \cA^{++}$. This proves (i). 
 
 Suppose that $\Sp(x)\subset (c_1, c_2)$. Then $\Sp(x-c_1)\subset (0,\infty)$, and so the function $z \rightarrow \sqrt{z}$ is holomorphic near $\Sp(x-c_1)$ and maps $\Sp(x-c_1)$ to $(0,\infty)$. 
 Thus, by the first part $y_1:=\sqrt{x-c_1}$ is an element of $\cA^{++}$. 
 In particular, this is a selfadjoint element of $\cA$ such that $x-c_1=(y_1)^2=y_1^*y_1$, and hence $x=c_1+y_1^*y_1$. 
 Likewise, as $\Sp(c_2-x)\subset (0,\infty)$, if we set $y_2=\sqrt{c_2-x}$, then $y_2\in \cA^{++}$ and $c_2-x=(y_2)^2=y_2^*y_2$. Thus, $x=c_2-y_2^*y_2$. The proof is complete. 
\end{proof}

In this paper we will make use of the following consequence of Lemma~\ref{lem:Pos.c A++-holomorphic}. 

\begin{lemma}\label{lem:Pos.cA++}
 Let $x\in \cA$. Then the following are equivalent: 
\begin{itemize}
 \item[(i)] $x\in \cA^{++}$. 
 
 \item[(ii)] There is $c>0$ such that $x\geq c$. 
 
 \item[(iii)] There are $y\in \cA$ and $c>0$ such that $x=y^*y+c$. 
\end{itemize}
\end{lemma}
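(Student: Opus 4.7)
My plan is to prove the equivalences by going around the cycle (i) $\Rightarrow$ (ii) $\Rightarrow$ (iii) $\Rightarrow$ (i), using the characterization~(\ref{eq:Pos.cA++}) of $\cA^{++}$ together with Lemma~\ref{lem:Pos.c A++-holomorphic}. Throughout, the key observation is that $\Sp(x)$ is always a \emph{compact} subset of $\R$ when $x$ is selfadjoint, so positivity of the spectrum upgrades to a uniform lower bound.

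For (i) $\Rightarrow$ (ii), suppose $x\in\cA^{++}$. By~(\ref{eq:Pos.cA++}), $x$ is selfadjoint with $\Sp(x)\subset(0,\infty)$. Since $\Sp(x)$ is compact, the number $c:=\min\Sp(x)$ is strictly positive, and $\Sp(x-c)\subset[0,\infty)$, which gives $x\geq c$.

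For (ii) $\Rightarrow$ (iii), suppose $x\geq c$ for some $c>0$. Then $x$ is selfadjoint with $\Sp(x)\subset[c,\infty)$, hence $\Sp(x)\subset(c/2,\|x\|+1)$. Lemma~\ref{lem:Pos.c A++-holomorphic}(ii) (applied with $c_1=c/2$ and $c_2=\|x\|+1$) then produces $y\in\cA$ with $x=c/2+y^*y$, which is the desired decomposition with constant $c/2>0$.

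For (iii) $\Rightarrow$ (i), suppose $x=y^*y+c$ with $y\in\cA$ and $c>0$. Then $x$ is selfadjoint, and inside the $C^*$-algebra $A$ we have $y^*y\geq 0$, hence $x\geq c>0$, which forces $\Sp(x)\subset[c,\infty)\subset(0,\infty)$. In particular $0\notin\Sp(x)$, so $x$ is invertible in $A$; since $\cA$ is stable under holomorphic functional calculus, the identity $\cA^{-1}=A^{-1}\cap\cA$ stated just before~(\ref{eq:Pos.spectrum}) gives $x\in\cA^{-1}$. The characterization~(\ref{eq:Pos.cA++}) then yields $x\in\cA^{++}$.

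I do not expect any real obstacle: each implication reduces to a one-line argument once the right spectral interval is chosen and Lemma~\ref{lem:Pos.c A++-holomorphic} is invoked. The only mild subtlety is the step (ii) $\Rightarrow$ (iii), where one must shrink $c$ slightly (to $c/2$) in order to apply Lemma~\ref{lem:Pos.c A++-holomorphic}(ii) with a strict open interval $(c_1,c_2)$ containing $\Sp(x)$.
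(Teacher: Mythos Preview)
Your proof is correct and follows essentially the same approach as the paper: the equivalence of (i) and (ii) comes straight from the spectral characterization~(\ref{eq:Pos.cA++}), and the passage from (ii) to (iii) goes through Lemma~\ref{lem:Pos.c A++-holomorphic}. You organize the implications as a cycle rather than pairwise and are slightly more explicit about shrinking $c$ to $c/2$ to get a strict open interval, but the substance is identical.
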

\begin{proof}
 As (ii) precisely means that $x$ is selfadjoint and has positive spectrum, the equivalence of (i) and (ii) is an immediate consequence of~(\ref{eq:Pos.cA++}). It is also immediate that (iii) implies (ii). Conversely, if $x\geq c$ with $c>0$, then $x$ is selfadjoint and its spectrum is contained in $[c,\infty)$. Therefore, by Lemma~\ref{lem:Pos.c A++-holomorphic} there are $y\in \cA$ and $c>0$ such that $x=y^*y+c$. Thus (ii) implies (iii). The proof is complete. 
\end{proof}

\begin{remark}
 It follows from (ii) that, if $x\in \cA^{++}$, then $x+y\in \cA^{++}$ for all $y\in \cA$, $y\geq 0$. 
\end{remark}

\begin{remark}
 It follows from~(iii) that $\cA^{++}$ does not depend on the embedding of $\cA$ into $A$. Moreover, as the proof above shows, in (iii) we may take $y$ to be in 
 $\cA^{++}$.  
\end{remark}

\subsection{Hermitian Modules} 
Let $\cE$ be a left module over $\cA$. The dual $\cE^*:=\Hom_\cA(\cE,\cA)$ inherits a right action $\cE^*\times \cA\ni  (\omega, x) \rightarrow  \omega x\in \cE^*$, where $\omega x$ is defined by
\begin{equation}
 \acou{\omega x}{\xi}:= \acou{\omega}{\xi}x, \qquad \xi \in \cE. 
 \label{eq:Pos.dual-right-action}
\end{equation}
It will be convenient to think of $\cE^*$ as a left module over the opposite algebra $\cA^{\opp}$. That is, the vector space $\cA$ equipped with the opposite product, 
\begin{equation*}
 x\circ y := yx, \qquad x,y\in \cA. 
\end{equation*}
The involution of $\cA$ is an anti-linear anti-automorphism of $\cA^\opp$ as well, and so $\cA^\opp$ is a $*$-algebra. Moreover, $\cA^\opp$ is closed under holomorphic functional calculus in the opposite $C^*$-algebra $A^\opp$. In what follows, we denote by $(x, \omega)\rightarrow x^\opp \omega$ the left-action of $\cA^\opp$ corresponding to the right-action~(\ref{eq:Pos.dual-right-action}). That is, 
\begin{equation*}
 \acou{x^\opp\omega}{\xi}= \acou{\omega x}{\xi}= \acou{\omega}{\xi}x = x\circ \acou{\omega}{\xi}. 
\end{equation*}

\begin{definition}
 A Hermitian metric on a left $\cA$-module $\cE$ is a map $\acoup\cdot\cdot:\cE\times \cE\rightarrow \cA$ satisfying the following properties:
 \begin{itemize}
 \item[(i)] It is \emph{$\cA$-sesquilinear}, i.e., $\acoup{x\xi}{y\eta}=x\acoup{\xi}{\eta}y^*$ for all $\xi,\eta\in \cE$ and $x,y\in \cA$. 
 
 \item[(ii)] It is \emph{positive}, i.e., $\acoup\xi\xi\geq 0$ for all $\xi\in \cE$. 
 
 \item[(iii)] It is \emph{non-degenerate}, i.e.,  the map $\cE\ni \xi \rightarrow \acoup{\cdot}\xi\in \cE^*$ is an anti-linear isomorphism. 
\end{itemize}
\end{definition}

\begin{remark}
 The conditions (i) and (ii) imply that 
\begin{equation*}
 \acoup\eta \xi = \acoup\xi{\eta}^* \qquad \text{for all $\xi,\eta\in \cE$}. 
\end{equation*}
\end{remark}

\begin{remark}
 Anti-linearity in (iii) is meant in the following sense. Given $\xi \in \cE$ set $\xi^*:=\acoup{\cdot}{\xi}\in \cE^*$, so that $\acou{\xi^*}{\eta}=\acoup{\eta}{\xi}$ for all $\eta \in \cE$. Then,  we have
 \begin{equation*}
 (x \xi)^* = \xi^* x^*= (x^*)^\opp \xi^* \qquad \text{for all $x\in \cA$}. 
\end{equation*}
\end{remark}

\begin{definition}
 A Hermitian module over $\cA$ is a left module $\cE$ over $\cA$ equipped with a Hermitian metric. 
\end{definition}

\begin{example}
 Given any $m\geq 1$, the free module $\cA^m$ is a Hermitian module with respect to canonical Hermitian metric, 
  \begin{equation}
 \acoup{\xi}{\eta} = \sum_{1\leq j \leq m} \xi_j \eta_j^*, \qquad \xi=(\xi_j), \ \eta=(\eta_j). 
 \label{eq:Positivity.canonical-metric}
\end{equation}
By construction this is a positive sesquilinear map. It is also non-degenerate, since the map $\xi\rightarrow \acoup{\cdot}{\xi}$ sends the canonical basis of $\cA^m$ to its dual basis. 
\end{example}

Let $(\cE, \acoup\cdot\cdot)$ be a Hermitian $\cA$-module. Given $\xi\in \cE$ set $\xi^*=\acoup{\cdot}{\xi}\in \cE^*$. As above we regard $\cE^*$ as a left $\cA^\opp$-module. By assumption $\xi \rightarrow \xi^*$ is an anti-linear isomorphism from $\cE$ onto $\cE^*$.  Thus, given any $\omega\in \cE^*$, there is a unique $\xi_\omega\in \cE$ 
such that $\omega=\xi_\omega^*$. Let  $\acoup\cdot{\cdot}':\cE^*\times \cE^*\rightarrow \cA$ be the map defined by
\begin{equation}
 \acoup{\omega}{\zeta}' := \acoup{\xi_{\zeta}}{\xi_\omega}, \qquad \omega,\zeta \in \cE^*. 
 \label{eq:Pos.dual-Hermitian}
\end{equation}
Equivalently, we have 
\begin{equation*}
 \acoup{\xi^*}{\eta^*}'= \acoup{\xi}{\eta}^* \qquad \text{for all $\xi,\eta\in \cE$}. 
\end{equation*}

\begin{lemma}\label{lem:Pos.dual-Hermitian}
$(\cE^*, \acoup\cdot{\cdot}')$ is a Hermitian $\cA^\opp$-module. 
\end{lemma}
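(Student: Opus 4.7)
The plan is to verify the three defining axioms of a Hermitian $\cA^\opp$-module in turn, exploiting throughout that the assignment $\cE\ni\xi\mapsto \xi^*\in \cE^*$ is an anti-linear bijection with inverse $\omega\mapsto\xi_\omega$.

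For $\cA^\opp$-sesquilinearity, the crucial intermediate identity is $\xi_{x^\opp\omega}=x^*\xi_\omega$ for every $x\in\cA$ and $\omega\in\cE^*$. This follows by applying $\xi\mapsto\xi^*$ and using the formula $(x\xi)^*=(x^*)^\opp\xi^*$ recalled in the remark preceding the lemma. Once this identity is in hand, the $\cA$-sesquilinearity of $\acoup{\cdot}{\cdot}$ yields
\begin{equation*}
\acoup{x^\opp\omega}{y^\opp\zeta}' = \acoup{y^*\xi_\zeta}{x^*\xi_\omega} = y^*\acoup{\xi_\zeta}{\xi_\omega}x = y^*\acoup{\omega}{\zeta}'x,
\end{equation*}
which is precisely $x^\opp\circ\acoup{\omega}{\zeta}'\circ(y^\opp)^*$ computed in $\cA^\opp$. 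The positivity axiom $\acoup{\omega}{\omega}'=\acoup{\xi_\omega}{\xi_\omega}\geq 0$ is then immediate from the positivity of the original metric.

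The main work lies in the non-degeneracy of $\acoup{\cdot}{\cdot}'$, namely that $\omega\mapsto\acoup{\cdot}{\omega}'$ is an anti-linear isomorphism from $\cE^*$ onto $(\cE^*)^*:=\Hom_{\cA^\opp}(\cE^*,\cA^\opp)$. Unwinding the definition gives $\acou{\omega^*}{\zeta}=\acoup{\xi_\omega}{\xi_\zeta}=\acou{\zeta}{\xi_\omega}$, so $\omega\mapsto\omega^*$ factors as the composition of the anti-linear bijection $\omega\mapsto\xi_\omega$ from $\cE^*$ to $\cE$, followed by the $\C$-linear evaluation map $\cE\to(\cE^*)^*$, $\xi\mapsto\hat\xi$, where $\hat\xi(\omega):=\acou{\omega}{\xi}$; a short computation shows that $\hat\xi$ is $\cA^\opp$-linear. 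Thus it suffices to show that $\xi\mapsto\hat\xi$ is bijective. Injectivity follows because $\hat\xi=0$ implies $\acoup{\xi}{\eta}=\acou{\eta^*}{\xi}=0$ for every $\eta\in\cE$, which forces $\xi^*=0$ and hence $\xi=0$. For surjectivity, given $\Phi\in(\cE^*)^*$, I would form the map $\psi:\cE\to\cA$ by $\psi(\eta):=\Phi(\eta^*)^*$; the $\cA^\opp$-linearity of $\Phi$ combined with $(x\eta)^*=(x^*)^\opp\eta^*$ then gives $\psi(x\eta)=x\psi(\eta)$, so $\psi\in\cE^*$, and the non-degeneracy of the metric on $\cE$ produces $\xi\in\cE$ with $\xi^*=\psi$; a brief verification yields $\Phi=\hat\xi$. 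The main obstacle is tracking the two successive anti-linear identifications so that $(\cE^*)^*$ is canonically recovered from $\cE$ via this evaluation map; by contrast, the sesquilinearity and positivity axioms become bookkeeping exercises once the identity $\xi_{x^\opp\omega}=x^*\xi_\omega$ is available.
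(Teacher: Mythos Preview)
Your treatment of $\cA^\opp$-sesquilinearity and positivity matches the paper's (and your computation via the first form of the definition $\acoup{\omega}{\zeta}'=\acoup{\xi_\zeta}{\xi_\omega}$ is in fact cleaner than the paper's, which passes through the equivalent form $\acoup{\xi^*}{\eta^*}'=\acoup{\xi}{\eta}^*$ and takes an extra adjoint).

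For non-degeneracy the paper argues differently. It introduces the anti-linear transpose $\Phi^t:\cE^{**}\to\cE^*$ of the musical isomorphism $\Phi:\xi\mapsto\xi^*$, observes that $\Phi^t$ is an isomorphism by functoriality, and then verifies by a one-line computation that $\Phi^t\circ\Psi=\op{id}$, whence $\Psi=(\Phi^t)^{-1}$. Your route instead factors $\Psi$ as $\Phi^{-1}$ followed by the canonical evaluation map $\xi\mapsto\hat\xi$ into $\cE^{**}$, and then proves directly that evaluation is bijective, constructing the inverse by $\Phi\mapsto(\eta\mapsto\Phi(\eta^*)^*)$. Both arguments are correct and are essentially dual to each other: the paper's $\Phi^t$ is the inverse of your evaluation map conjugated by $\Phi$. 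The paper's approach is a touch slicker because functoriality of the transpose gives bijectivity for free, while yours is more explicit and, as a bonus, establishes reflexivity $\cE\cong\cE^{**}$ directly---a fact the paper only notes as a remark after its proof.
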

\begin{proof}
We just need to check that $ \acoup\cdot{\cdot}'$ is a Hermitian metric. The sesquilinearity of $\acoup{\cdot}{\cdot}$ and the anti-linearity of the map $\omega \rightarrow \xi_\omega$ imply that $(\cdot,\cdot)'$ is $\cA^\opp$-sesquilinear. Indeed, as $\xi_{x^\opp \omega}=x^*\xi_\omega$, for all $\omega, \zeta\in \cE^*$ and $x,y\in \cA$, we have
\begin{equation*}
 \acoup{x^\opp \omega}{y^\opp \zeta}' = \acoup{\xi_{x^\opp \omega}}{\xi_{y^\opp \zeta}}^*= \acoup{x^*\xi_\omega}{y^*\xi_{\zeta}}^*. 
\end{equation*}
 As $\acoup{x^*\xi_\omega}{y^*\xi_\zeta}=x^* \acoup{\xi_\omega}{\xi_{\zeta}}y= x^* [\acoup{\omega}{\zeta}']^*y= [y^*\acoup{\omega}{\zeta}' x]^*$, we see that
\begin{equation*}
\acoup{x^\opp \omega}{y^\opp \zeta}' = y^* \acoup{\omega}{\zeta}' x = x\circ \acoup{\omega}{\zeta}' \circ y^*. 
\end{equation*}
This shows that $\acoup{\cdot}{\cdot}$  is $\cA^\opp$-sesquilinear. Moreover, the positivity of $\acoup{\cdot}{\cdot}$ implies that, for all $\omega\in \cE^*$, we have 
$\acoup{\omega}{\omega}'= \acoup{\xi_\omega}{\xi_\omega}^*= \acoup{\xi_\omega}{\xi_\omega}\geq 0$. That is, $\acoup{\cdot}{\cdot}'$ is positive. 

It remains to show that $\acoup{\cdot}{\cdot}'$ is non-degenerate. Let $\cE^{**}=\Hom_{\cA^\opp}(\cE^*,\cA^\opp)$ be the bidual. This is a left module over $\cA$ with respect to the action $\cA\times \cE^{**}\ni (x,\zeta)\rightarrow x \zeta \in \cE^{**}$ given by
\begin{equation*}
 \acou{x\zeta}{\omega}= \acou{\zeta}{\omega} \circ x =x  \acou{\zeta}{\omega}, \qquad \omega\in \cE^*. 
\end{equation*}
Given any $\omega\in \cE^*$ we set $\omega^*:=\acoup{\cdot}{\omega}'\in \cE^{**}$. We need to show that the map $\Psi:\omega \rightarrow \omega^*$ is an anti-linear isomorphism from $\cE^*$ onto $\cE^{**}$. To see this let us denote by $\Phi$ the isomorphism $\xi \rightarrow \xi^*$ from $\cE$ onto $\cE^*$ defined by 
$\acoup{\cdot}{\cdot}$. By duality we get an anti-linear transpose map $\Phi^t: \cE^{**}\rightarrow \cE^*$ given by
\begin{equation*}
 \acou{\Phi^t(\zeta)}{\eta}:=\acou{\zeta}{\Phi(\eta)}^*= \acou{\zeta}{\eta^*}^*, \qquad \zeta\in \cE^{**},\ \eta \in \cE. 
\end{equation*}
By functoriality we get an anti-linear isomorphism with inverse the transpose of $\Phi^{-1}$. 

Bearing this in mind, let $\omega\in \cE^*$ and $\eta \in \cE$. Then we have
\begin{equation*}
 \acou{\Phi^t \circ \Psi(\omega)}{\eta}= \acou{\omega^*}{\eta^*}^* = \left[\acoup{\eta^*}{\omega}'\right]^*= \acoup{\xi_{\eta^*}}{\xi_\omega}= \acoup{\eta}{\xi_\omega}= \acou{\xi_\omega^*}{\eta}=\acou{\omega^*}{\eta}. 
\end{equation*}
This shows that $\Phi^t \circ \Psi=\op{id}$, and so $\Psi= (\Phi^t)^{-1}$. Thus,  $\Psi$ is an isomorphism, and hence $\acoup{\cdot}{\cdot}'$ is non-degenarate. It then follows that $\acoup{\cdot}{\cdot}'$ is a Hermitian metric. The proof is complete. 
\end{proof}

\begin{remark}
A standard elaboration of the arguments of the proof above shows that $\Psi \circ \Phi$ agrees with canonical map from $\cE$ to $\cE^{**}$, and so $\cE$ and its bidual are isomorphic left modules. 
\end{remark}

\subsection{Free Hermitian modules over $\cA_\theta$} 
Given $m\geq 2$ let $M_m(\cA_\theta)$ be the $*$-algebra of $m\times m$-matrices with entries in $\cA_\theta$. The $*$-action of $\cA_\theta$ on $\cH_\theta$ naturally gives to a unitary action of $M_m(\cA_\theta)$ on the Hilbert space $\cH^m_\theta=\cH_\theta \oplus \cdots \oplus \cH_\theta$ ($m$-summands). Equivalently, $\cH^m_\theta$ is just the completion of $\cA^m_\theta$ with respect to the pre-inner product, 
\begin{equation}
 \scal{\xi}{\eta}= \sum_{1\leq i \leq m}  \scal{\xi_i}{\eta_i}= \sum_{1\leq i \leq m} \tau\left[\xi_i \eta_i^*\right]=\tau \left[ \acoup{\xi}{\eta}\right], \qquad \xi,\eta \in \cA^m_\theta,
 \label{eq:Pos.inner product-Atm}
\end{equation}
where $\acoup{\cdot}{\cdot}$ is the canonical Hermitian metric~(\ref{eq:Positivity.canonical-metric}). All this allows us to regard $M_m(\cA_\theta)$ as a $*$-subalgebra of the $C^*$-algebra $\cL(\cH^m)$. In addition, $M_m(\cA_\theta)$ is closed under holomorphic calculus (see, e.g., \cite[Proposition~3.39]{GVF:Birkh01}).

We what follows we denote by $\GL_m(\cA_\theta)$ the invertible group of $M_m(\cA_\theta)$ and by $\GL^+_m(\cA_\theta)$ the set of invertible positive elements. As $M_m(\cA_\theta)$ is closed under holomorphic calculus (see, e.g., \cite[Proposition~3.39]{GVF:Birkh01}), Lemma~\ref{lem:Pos.cA++} provides us with characterizations of  $\GL^+_m(\cA_\theta)$. 

Note also that matrix multiplication gives rise to a right action  $(\xi, a)\rightarrow \xi a$ of $M_m(\cA_\theta)$ on the free $\cA_\theta$-module $\cA_\theta^m$, i.e., we get a left-action of the opposite algebra $M_m(\cA_\theta)^\opp$. Namely, given any $\xi=(\xi_j)\in \cA_\theta^m$ and $a=(a_{ij})\in M_m(\cA_\theta)$, we have 
\begin{equation*}
 \xi a = \big( \sum_i \xi_i a_{ij} \big)_{1\leq i \leq m} . 
\end{equation*}
If we let $a^*=(a_{ji}^*)$ be the adjoint matrix of $a$, then, for all $\xi,\eta\in \cA_\theta^m$, we have
\begin{equation*}
 \acoup{\xi a}\eta = \sum_{1 \leq i,j\leq n} \xi_i a_{ij} \eta_j^* = \sum_{1\leq i \leq n} \big( \sum_{1\leq j \leq n} \eta_j a_{ij}^*\big) =\acoup{\xi}{\eta a^*}.
\end{equation*}
In particular, we see that 
\begin{equation*}
 a=a^* \ \Longleftrightarrow \ \acoup{\xi a}{\eta}= \acoup{\xi}{\eta a} \ \text{for all $\xi,\eta \in \cA_\theta$}. 
\end{equation*}

In what follows we denote by $(\epsilon_1, \ldots, \epsilon_m)$ the canonical basis of $\cA_\theta^m$. Any Hermitian metric $(\cdot,\cdot)_1$ on $\cA_\theta^m$ is uniquely determined by its coefficent matrix $h=(h_{ij})$, where $h_{ij} =(\epsilon_i,\epsilon_j)$, $1\leq i,j\leq m$. Indeed, by sesquilinearity, for all $\xi = (\xi_i)$ and $\eta=(\eta_i)$ in $\cA_\theta^m$, we have 
\begin{equation}
(\xi,\eta)_1 = \sum_{1\leq i,j\leq m} (\xi_i \epsilon_i,\eta_j \epsilon_j)_1 =  \sum_{1\leq i,j\leq m} \xi_i( \epsilon_i, \epsilon_j)_1 \eta_j^*=  
\sum_{1\leq i,j\leq m} \xi_ih_{ij} \eta_j^* = \acoup{\xi h}{\eta}. 
 \label{eq:Pos.brak-h}
\end{equation}

\begin{proposition}\label{prop:Positivity.Hermitian-metrics-free}
Let $(\cdot,\cdot)_1$ be a Hermitian metric on $\cA_\theta^m$. Then its coefficient matrix is in $\GL_m^+(\cA_\theta)$. 
Conversely, any $h\in \GL_m^+(\cA_\theta)$ defines a Hermitian metric $\acoup{\cdot}{\cdot}_h$ on $\cA_\theta^m$ given by 
 \begin{equation}
 \acoup{\xi}{\eta}_h:= \acoup{\xi h}{\eta}= \sum_{1 \leq i,j\leq n} \xi_i h_{ij} \eta_j^*, \qquad \xi=(\xi_j), \ \eta=(\eta_j).
 \label{eq:Positivity.acouph}
\end{equation}
This gives a one-to-one correspondence between $\GL_m^+(\cA_\theta)$ and Hermitian metrics on $\cA_\theta^m$. 
\end{proposition}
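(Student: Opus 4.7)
The plan is to check that each of the two constructions---Hermitian metric $\mapsto$ coefficient matrix and $h\mapsto \acoup{\cdot}{\cdot}_h$ via~(\ref{eq:Positivity.acouph})---lands in the right class, and then to observe that they are mutually inverse on the nose. Indeed, by~(\ref{eq:Pos.brak-h}) any sesquilinear form on $\cA_\theta^m$ is recovered from its coefficient matrix, while by direct computation $\acoup{\epsilon_i}{\epsilon_j}_h=h_{ij}$; so the only real content is that the coefficient matrix of any Hermitian metric lies in $\GL_m^+(\cA_\theta)$ and that any $h\in \GL_m^+(\cA_\theta)$ gives a Hermitian metric via~(\ref{eq:Positivity.acouph}).

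The key preliminary I would isolate is the following matrix-positivity criterion: for selfadjoint $h=(h_{ij})\in M_m(\cA_\theta)$, one has $h\geq 0$ in $M_m(\cA_\theta)$ if and only if $\sum_{i,j}\xi_i h_{ij}\xi_j^*\geq 0$ in $\cA_\theta$ for every $\xi_1,\ldots,\xi_m\in \cA_\theta$. For the forward direction, closure of $M_m(\cA_\theta)$ under holomorphic functional calculus lets me factor $h=b^*b$ with $b=(b_{ij})\in M_m(\cA_\theta)$, so that $\sum_{i,j}\xi_i h_{ij}\xi_j^*=\sum_k c_kc_k^*$ with $c_k=\sum_i\xi_i b_{ki}^*$ is manifestly positive. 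For the converse, I would pair against the trace: for any $v\in \cA_\theta^m$, the trace property of $\tau$ combined with the substitution $\xi_i=v_i^*$ in the hypothesis gives
\begin{equation*}
\scal{hv}{v}_{\cH_\theta^m}=\sum_{i,j}\tau\left[h_{ij}v_jv_i^*\right]=\tau\left[\sum_{i,j}v_i^* h_{ij}v_j\right]\geq 0,
\end{equation*}
and density of $\cA_\theta^m$ in $\cH_\theta^m$ promotes this to operator positivity of $h$ on $\cH_\theta^m$, hence $h\geq 0$ in $M_m(A_\theta)$, and finally in $M_m(\cA_\theta)$ by the pre-$C^*$-algebra property.

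With this lemma in hand, both directions of the correspondence follow easily. Given a Hermitian metric $(\cdot,\cdot)_1$, selfadjointness of its coefficient matrix $h=(h_{ij})$ is the conjugate symmetry $(\epsilon_j,\epsilon_i)_1=(\epsilon_i,\epsilon_j)_1^*$ on basis vectors, positivity is the criterion applied to $(\xi,\xi)_1\geq 0$, and invertibility comes from non-degeneracy: after identifying $(\cA_\theta^m)^*$ with $\cA_\theta^m$ via the dual basis, the anti-linear map $\xi\mapsto (\cdot,\xi)_1$ becomes $\xi\mapsto h\xi^*$ (with $\xi^*=(\xi_j^*)$ viewed as a column vector), whose bijectivity forces left multiplication by $h$ on columns to be bijective, and hence $h\in \GL_m(\cA_\theta)$. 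Conversely, given $h\in \GL_m^+(\cA_\theta)$, sesquilinearity of $\acoup{\cdot}{\cdot}_h$ is immediate from~(\ref{eq:Positivity.acouph}), positivity is the forward direction of the lemma, and non-degeneracy follows because $\xi\mapsto h\xi^*$ admits the explicit anti-linear inverse $\eta\mapsto (h^{-1}\eta)^*$. I expect the only genuinely delicate step to be the converse direction of the lemma, where an $\cA_\theta^+$-valued quadratic inequality must be promoted to spectral positivity of $h$ in the ambient $C^*$-algebra $M_m(A_\theta)$; the trace-and-density maneuver above is what accomplishes this.
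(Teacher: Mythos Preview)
Your proof is correct and follows essentially the same approach as the paper's: both obtain positivity of the coefficient matrix via the trace-and-density argument you describe, invertibility by unwinding the non-degeneracy isomorphism (the paper phrases it as right multiplication $\xi\mapsto\xi h$ on rows rather than your $\xi^*\mapsto h\xi^*$ on conjugate columns, but these coincide), and positivity of $(\cdot,\cdot)_h$ by factoring $h$ (the paper uses $h=bb^*+c$ from Lemma~\ref{lem:Pos.cA++} rather than your $h=b^*b$, a cosmetic difference). One minor imprecision worth noting: your factorization $h=b^*b$ with $b\in M_m(\cA_\theta)$ via \emph{holomorphic} functional calculus requires $0\notin\Sp(h)$, so your lemma as stated for arbitrary $h\geq 0$ is slightly stronger than what that argument justifies; since you only invoke the forward direction when $h\in\GL_m^+(\cA_\theta)$, this does not affect the proof of the proposition.
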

\begin{proof}
Let $(\cdot,\cdot)_1$ be a Hermitian metric on $\cA_\theta^m$ and $h=(h_{ij})$ its coefficient matrix, with $h_{ij} =(\epsilon_i,\epsilon_j)_1$.  
Note that $h_{ji}=(\varepsilon_j, \varepsilon_i)_1= (\varepsilon_i, \varepsilon_j)_1^*=h_{ij}^*$, and so $h^*=h$, i.e., $h$ is selfadjoint.  
In addition, let $\xi=(\xi_i)$ be in $\cA_\theta^m$, and set $\xi^*=(\xi_i^*)$. By using~(\ref{eq:Pos.inner product-Atm})--(\ref{eq:Pos.brak-h}) and the fact that 
$\tau$ is a trace we get
\begin{equation*}
 \scal{h\xi}{\xi}=  \tau \left[ \acoup{h\xi}{\eta}\right]= \tau \big[ \sum_{i,j} h_{ij}\xi_j \xi_i^* \big] =  \tau \big[ \sum_{i,j} \xi_i^* h_{ij}\xi_j  \big] = \tau \left[ \big(\xi^*, \xi^*\big)\right].
\end{equation*}
The positivity of $(\cdot,\cdot)_1$ and $\tau$ then ensures us that $ \scal{h\xi}{\xi}\geq 0$. Combining this with the density of $\cA_\theta^m$ in $\cH^m_\theta$ shows that $ \scal{h\xi}{\xi}\geq 0$ for all $\xi \in \cH^m_\theta$. That is, $h$ is positive. 

Let us denote by $\Psi$ the anti-linear isomorphism $\cA_\theta^m \ni \xi \rightarrow (\cdot, \xi)_1\in (\cA_\theta^m)^*$ defined by the Hermitian metric $(\cdot, \cdot)_1$. We also denote by $\Phi$ the canonical isomorphism $\xi \rightarrow \xi^*=\acou{\cdot}{\xi}$. As $h$ is selfadjoint, given any $\xi$ and $\eta$ in $\cA_\theta^m$, we have 
\begin{equation}
 \acou{\Psi(\xi)}{\eta} = \big(\eta,\xi\big)_1=(\eta h,\xi) =(\eta, \xi h)= \acou{\Phi(\xi h)}{\eta}. 
 \label{eq:Positivity.Phi-Psi-h}
\end{equation}
 Thus, if we denote by $h^\opp$ the right action of $h$ on $\cA_\theta^m$, then $\Psi= \Phi \circ h^\opp$, i.e., $h^\opp=\Phi^{-1}\circ \Psi$. In particular, $h^\opp$ is an invertible left module map, and so $h$ is invertible in $M_m(\cA_\theta)$ with inverse $h^{-1}=(h^{ij})$, where $h^{ij}:=\acoup{\Psi^{-1}\circ \Phi(\varepsilon_i)}{\varepsilon_j}$, $i,j=1,\ldots, m$. This shows that $h$ is invertible and positive, i.e., it lies in $\GL_m^+(\cA_\theta)$. 

Conversely, let $h  \in \GL_m^+(\cA_\theta)$. It is immediate that the map $\acoup{\cdot}{\cdot}_h$ defined by~(\ref{eq:Positivity.acouph}) is $\cA_\theta$-sesquilinear. Moreover, by Lemma~\ref{lem:Pos.cA++} there are $c>0$ and $b\in M_m(\cA_\theta)$ such that $h=bb^*+c$. Thus, given any $\xi \in M_m(\cA_\theta)$, we have 
\begin{equation}
 \acoup{\xi}{\xi}_h= \acoup{\xi h}{\xi}= \acoup{\xi bb^*}{\xi}+c\acoup{\xi}{\xi}=\acoup{\xi b}{\xi b} + c\acoup{\xi}{\xi}\geq c(\xi,\xi)\geq 0.
 \label{eq:Pos.lower-bound} 
\end{equation}
In particular, this shows that  $\acoup{\cdot}{\cdot}_h$ is positive.  

Let us now show that  $\acoup{\cdot}{\cdot}_h$ is non-degenerate. Given $\xi\in \cA_\theta^m$, set $\xi_h^*=\acoup{\cdot}{\xi}\in (\cA_\theta^m)^*$. In the same way as in~(\ref{eq:Positivity.Phi-Psi-h}), for all $\eta \in \cA_\theta^m$, we have 
\begin{equation}
 \acou{\xi^*_h}{\eta}=\acoup{\eta}{\xi}_h= \acoup{\eta h}{\xi}= \acoup{\eta }{h\xi}=\acou{(\xi h)^*}{\eta}.
 \label{eq:Pos.xi*-xi*h}  
\end{equation}
This shows that the map $\xi \rightarrow \xi_h^*$ is the composition of the right action by $h$ on $\cA_\theta^m$ with the anti-linear isomorphism $\xi \rightarrow \xi^*$. The right action by $h$ is an invertible left module map, since $h$ is invertible. Therefore, the map $\xi \rightarrow \xi_h^*$ is an isomorphism, and so $\acoup{\cdot}{\cdot}_h$ is non-degenerate. As this is a positive sesquilinear map, we see that $\acoup{\cdot}{\cdot}_h$ is a Hermitian metric.  The proof is complete. 
\end{proof}

\begin{corollary}\label{cor:Positivity.equivalence-free}
 Let  $(\cdot,\cdot)_1$ be a Hermitian metric on $\cA_\theta^m$. 
 \begin{enumerate}
       \item[(i)]  There are $c_1>0$ and $c_2>0$ such that
            \begin{equation}
               c_1\acoup{\xi}{\xi} \leq (\xi,\xi)_1 \leq  c_2\acoup{\xi}{\xi} \qquad \forall \xi \in \cA_\theta^m.
               \label{eq:Positivity.equivalence-free} 
            \end{equation}
            
            \item[(ii)] $(\xi,\xi)_1\in \cA_\theta^{++}$ for all $\xi \in \C^m\setminus 0$. 
\end{enumerate}
\end{corollary}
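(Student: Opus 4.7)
The plan is to reduce everything to the coefficient matrix, and then apply Lemma~\ref{lem:Pos.cA++} inside the pre-$C^*$-algebra $M_m(\cA_\theta)$. By Proposition~\ref{prop:Positivity.Hermitian-metrics-free}, write $(\xi,\eta)_1=\acoup{\xi h}{\eta}$ for the unique $h\in \GL_m^+(\cA_\theta)$ determined by $(\cdot,\cdot)_1$. Since $M_m(\cA_\theta)$ is closed under holomorphic functional calculus, Lemma~\ref{lem:Pos.cA++} applied in $M_m(\cA_\theta)$ produces $c_1>0$ and $b\in M_m(\cA_\theta)$ with $h=bb^*+c_1$, hence $h\geq c_1$. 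For an upper bound, $h$ acts as a bounded selfadjoint operator on $\cH_\theta^m$, so any choice $c_2>\|h\|$ gives $\Sp(c_2-h)\subset (0,\infty)$, and Lemma~\ref{lem:Pos.c A++-holomorphic}(ii) yields $y\in M_m(\cA_\theta)$ with $c_2-h=y^*y$.

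To prove (i), I would exploit the identity $\acoup{\xi a}{\eta}=\acoup{\xi}{\eta a^*}$ recorded just before Proposition~\ref{prop:Positivity.Hermitian-metrics-free} to compute
\begin{equation*}
(\xi,\xi)_1-c_1\acoup{\xi}{\xi} = \acoup{\xi(h-c_1)}{\xi}= \acoup{\xi bb^*}{\xi}=\acoup{\xi b}{\xi b}\geq 0,
\end{equation*}
and likewise
\begin{equation*}
c_2\acoup{\xi}{\xi}-(\xi,\xi)_1 = \acoup{\xi(c_2-h)}{\xi}= \acoup{\xi y^*y}{\xi}=\acoup{\xi y^*}{\xi y^*}\geq 0.
\end{equation*}
This yields the double inequality~(\ref{eq:Positivity.equivalence-free}).

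For (ii), I would specialize (i) to a scalar vector $\xi \in \C^m\setminus 0$ embedded into $\cA_\theta^m$. In that case $\acoup{\xi}{\xi}=|\xi|^2\cdot 1_{\cA_\theta}$ is a strictly positive scalar, so the lower bound just established gives $(\xi,\xi)_1\geq c_1|\xi|^2>0$. Combined with the automatic selfadjointness of $(\xi,\xi)_1$ (from the Hermitian-metric axioms), Lemma~\ref{lem:Pos.cA++} immediately places $(\xi,\xi)_1$ in $\cA_\theta^{++}$.

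I do not expect a real obstacle; the argument is essentially bookkeeping with the right action on $\cA_\theta^m$. The only place that calls for a bit of care is verifying $\acoup{\xi a^*a}{\xi}=\acoup{\xi a^*}{\xi a^*}$, which is just the identity $\acoup{\xi a}{\eta}=\acoup{\xi}{\eta a^*}$ applied with $\eta=\xi a^*$. Beyond that, the proof is a direct combination of Proposition~\ref{prop:Positivity.Hermitian-metrics-free} with the characterizations of $\cA^{++}$ provided by Lemmas~\ref{lem:Pos.c A++-holomorphic} and~\ref{lem:Pos.cA++}.
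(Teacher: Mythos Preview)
Your proof is correct and follows essentially the same approach as the paper: both arguments pass to the coefficient matrix $h\in\GL_m^+(\cA_\theta)$ via Proposition~\ref{prop:Positivity.Hermitian-metrics-free}, factor $h-c_1$ and $c_2-h$ as matrix squares using Lemma~\ref{lem:Pos.c A++-holomorphic}/Lemma~\ref{lem:Pos.cA++}, and then unwind $(\xi,\xi)_1=\acoup{\xi h}{\xi}$ through the identity $\acoup{\xi a}{\eta}=\acoup{\xi}{\eta a^*}$. The only cosmetic difference is that the paper invokes Lemma~\ref{lem:Pos.c A++-holomorphic}(ii) once with $\Sp(h)\subset(c_1,c_2)$ to obtain both factorizations simultaneously, whereas you obtain the lower and upper bounds separately.
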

\begin{proof}
 Let $h=(h_{ij})$ be the coefficient matrix of the Hermitian metric $(\cdot,\cdot)_1$. We know by Proposition~\ref{prop:Positivity.Hermitian-metrics-free} that $h\in \GL_m^+(\cA_\theta)$. That is,  $h$ is selfadjoint and has positive spectrum. In particular, $\Sp (h)\subset (c_1,c_2)$ with $0<c_1<c_2$. Thus, by Lemma~\ref{lem:Pos.c A++-holomorphic} there are $b_i\in M_m(\cA_\theta)$, $i=1,2$, such that $c_1+b_1b_1^*=h=c_2-b_2b_2^*$. Let $\xi\in \cA_\theta^m$. The equality  $h=c_1+b_1b_1^*$ implies that we have 
\begin{equation}
 (\xi,\xi)_1=\acoup{\xi h}{\xi}= c_1 \acoup{\xi}{\xi}+ \acoup{\xi b_1b_1^*}{\xi}= c_1 \acoup{\xi}{\xi} + \acoup{\xi b_1}{\xi b_1}\geq c_1 \acoup{\xi}{\xi}.
 \label{eq:Positivity.metric-c2} 
\end{equation}
 Likewise, by using the equality $h=c_2- b_2b_2^*$  we get $ (\xi,\xi)_1= \acoup{\xi h}{\xi}=c_2 \acoup{\xi}{\xi} - \acoup{\xi b_2}{\xi b_2}$, and so  
 $(\xi,\xi)_1 \leq  c_2\acoup{\xi}{\xi}$. This gives~(\ref{eq:Positivity.equivalence-free}). 
 
 In order to get (ii) we just note that  if $\xi \in \C^m\setminus 0$, then~(\ref{eq:Positivity.metric-c2}) gives $(\xi,\xi)_1\geq c_1 \acoup{\xi}{\xi}=c_1|\xi|^2$. As $c_1|\xi|^2>0$ it then follows from Lemma~\ref{lem:Pos.cA++} that $(\xi,\xi)_1\in \cA_\theta^{++}$. The proof is complete. 
 \end{proof}

Given any left module $\cE$ over $\cA_\theta$,  we shall say that two Hermitian metrics $\acoup{\cdot}{\cdot}_1$ and $\acoup{\cdot}{\cdot}_2$ on $\cE$ are equivalent when there are constants $c_1>0$ and $c_2>0$ such that 
\begin{equation*}
  c_1\acoup{\xi}{\xi}_1 \leq \acoup{\xi}{\xi}_2 \leq  c_2\acoup{\xi}{\xi}_1 \qquad \forall \xi \in \cE. 
\end{equation*}
For instance, the first part of Corollary~\ref{cor:Positivity.equivalence-free} asserts that all Hermitian metrics on $\cA_\theta^m$ are equivalent to the canonical Hermitian metric~(\ref{eq:Positivity.canonical-metric}). More generally, we have the following. 

\begin{corollary}\label{cor:Positivity.equivalence-proj}
 Assume that $\cE$ is a finitely generated left module over $\cA_\theta$. Then all Hermitian metrics on $\cE$ are equivalent. 
\end{corollary}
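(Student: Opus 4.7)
The plan is to reduce to the free-module case of Corollary~\ref{cor:Positivity.equivalence-free}. First I would fix a finite generating set $\xi_1,\ldots,\xi_m$ of $\cE$ and form the associated surjection $\pi\colon\cA_\theta^m\to\cE$ sending the canonical basis $(\epsilon_i)$ to $(\xi_i)$. Any Hermitian metric $(\cdot,\cdot)_k$ on $\cE$ then pulls back to a positive sesquilinear pairing on $\cA_\theta^m$ with coefficient matrix $h_k:=\big((\xi_i,\xi_j)_k\big)\in M_m(\cA_\theta)$, and $(\pi(a),\pi(a))_k=\acoup{ah_k}{a}$ for every $a\in\cA_\theta^m$.

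The key structural step is to show that $\cE$ is projective as a left $\cA_\theta$-module. To this end, I would introduce the \emph{frame-type} endomorphism $T\colon\cE\to\cE$, $T\xi:=\sum_{i=1}^m(\xi,\xi_i)_1\xi_i$, associated with one of the metrics. A direct computation shows that $T$ is selfadjoint and nonnegative for $(\cdot,\cdot)_1$ with $(T\xi,\xi)_1=\sum_i(\xi,\xi_i)_1(\xi_i,\xi)_1$, while definiteness of $(\cdot,\cdot)_1$ (a consequence of positivity and non-degeneracy via Cauchy--Schwarz) combined with the generating property of the $\xi_i$ forces $(T\xi,\xi)_1=0$ to imply $\xi=0$. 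Exploiting that $\cA_\theta$ is closed under holomorphic functional calculus (Section~\ref{sec:NCtori}), one then promotes this to invertibility of $T$ in $\End_{\cA_\theta}(\cE)$, and the map $s(\xi):=\big((T^{-1}\xi,\xi_1)_1,\ldots,(T^{-1}\xi,\xi_m)_1\big)$ provides a left $\cA_\theta$-linear section of $\pi$. The resulting idempotent $s\circ\pi\in\End_{\cA_\theta}(\cA_\theta^m)$, viewed as right multiplication by an element of $M_m(\cA_\theta)$, can then be conjugated, again via holomorphic functional calculus, into a selfadjoint idempotent $p=p^*=p^2\in M_m(\cA_\theta)$ satisfying $\cE\cong\cA_\theta^m\cdot p$.

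With projectivity at hand, given two Hermitian metrics $(\cdot,\cdot)_1$ and $(\cdot,\cdot)_2$ on $\cE$, I would extend each to a Hermitian metric $q_k$ on the ambient free module $\cA_\theta^m=\cA_\theta^m p\oplus\cA_\theta^m(I-p)$ by
\begin{equation*}
q_k(\xi,\eta):=(\xi p,\eta p)_k+\acoup{\xi(I-p)}{\eta(I-p)},\qquad \xi,\eta\in\cA_\theta^m,
\end{equation*}
the second summand being the canonical Hermitian metric restricted to the complementary submodule. Sesquilinearity and positivity are immediate, and the direct-sum decomposition of $\cA_\theta^m$ together with non-degeneracy on each summand yields the non-degeneracy of $q_k$. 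Corollary~\ref{cor:Positivity.equivalence-free} then produces constants $c_1,c_2>0$ such that $c_1 q_1(\xi,\xi)\leq q_2(\xi,\xi)\leq c_2 q_1(\xi,\xi)$ for every $\xi\in\cA_\theta^m$; restricting to $\xi\in\cE=\cA_\theta^m p$ recovers the equivalence $c_1(\xi,\xi)_1\leq(\xi,\xi)_2\leq c_2(\xi,\xi)_1$, as required.

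The main obstacle is the projectivity step: both the invertibility of $T$ within $\End_{\cA_\theta}(\cE)$ and the promotion of the abstract idempotent $s\circ\pi$ to a selfadjoint idempotent inside $M_m(\cA_\theta)$ (not merely in the $C^*$-completion $M_m(A_\theta)$) rely crucially on the pre-$C^*$-property of $\cA_\theta$ and the resulting stability of $M_m(\cA_\theta)$ under holomorphic functional calculus.
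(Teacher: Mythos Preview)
Your overall strategy coincides with the paper's: realize $\cE$ as a direct summand of a free module, extend each Hermitian metric to $\cA_\theta^m$, and invoke Corollary~\ref{cor:Positivity.equivalence-free}. The paper's proof is far terser than yours: it simply opens with ``without loss of generality $\cE=\cA_\theta^m e$ with $e^2=e$'' and then observes that every Hermitian metric on $\cA_\theta^m e$ is the restriction of one on $\cA_\theta^m$. In other words, the paper \emph{assumes} projectivity outright (and indeed its only later use of this corollary, in the densities section, is for finitely generated \emph{projective} Hermitian modules). Your explicit extension $q_k$ via the orthogonal complement is exactly the content of the paper's one-line ``observation,'' just written out.

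The substantive difference is your attempt to \emph{derive} projectivity from the existence of a Hermitian metric via the frame operator $T$. This is more ambitious than what the paper does, but the gap you flag at the end is real and not merely cosmetic. From $(T\xi,\xi)_1=0\Rightarrow\xi=0$ you get only a weak injectivity; to promote $T$ to an invertible element you would need a lower bound $T\geq c\cdot\mathrm{id}$ in some $C^*$-sense, or at least that $\End_{\cA_\theta}(\cE)$ sits inside a $C^*$-algebra in which $T$ is positive with $0\notin\Sp(T)$. The standard route to such statements (completing $\cE$ to a Hilbert $A_\theta$-module, identifying adjointable operators, then descending) already presupposes a dense projective submodule or some equivalent structure, so there is a genuine risk of circularity. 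If you want a clean statement, it is safest to add ``projective'' to the hypotheses---as the paper effectively does in practice---and then your extension-and-restrict argument goes through exactly as written.
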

\begin{proof}
 Without any loss of generality we may assume that $\cE=\cA_\theta^m e$, with $e\in M_m(\cA_\theta)$, $e^2=e$, $m\geq 1$. It is then enough to observe  that every Hermitian metric on $\cA_\theta^m e$ is the restriction of a Hermitian metric on $\cA_\theta^m$, and hence is equivalent to the restriction of the canonical Hermitian metric. The proof is complete. 
\end{proof}

\section{Riemannian Metrics on Noncommutative Tori} \label{sec:Riem}
In this section,  we recall the definition of Riemannian metrics on noncommutative tori by Rosenberg~\cite{Ro:SIGMA13}. Our definition slightly differs from Rosenberg's original definition. We also present a few examples and introduce a notion of conformal equivalence of Riemannian metrics. 

\subsection{Riemannian metrics} 
The left action of $\cA_\theta$ on itself gives rise to a left action on the algebra $\cL(\cA_\theta)$ of continuous endomorphisms on $\cA_\theta$. This allows us to regard $\cL(\cA_\theta)$ as a left $\cA_\theta$-module. 
Note that the  canonical derivations $\partial_1, \ldots, \partial_n$ are linearly independent in $\cL(\cA_\theta)$. Indeed, if $\sum_j a^j \partial_j=0$ with $a^j\in \cA_\theta$, then~(\ref{eq:NCtori.delta-U}) ensures us that, for $j=1,\ldots, n$, we have $ 0 = \sum_{l} a^l \delta_l(U_j)=ia^jU_j$, and hence $a^1=\cdots =a^n=0$.

\begin{definition}[\cite{Ro:SIGMA13}] 
 $\cX_\theta$ is the free left $\cA_\theta$-module generated by the derivations $\partial_1, \ldots, \partial_n$. 
\end{definition}

We shall think of $\cX_\theta$ as the module of vector fields on $\cA_\theta$. The coordinate system on  $\cX_\theta$ defined by $\partial_1, \ldots, \partial_n$ gives rise to an $\cA_\theta$-module isomorphism $\cA_\theta^n\simeq \cX_\theta$ under which $(\partial_1, \ldots, \partial_n)$ corresponds to the canonical basis of $\cA^n_\theta$. Under this identification we have a one-to-one correspondence between Hermitian metrics on $\cA_\theta^m$ and $\cX_\theta$. Combining this with 
Proposition~\ref{prop:Positivity.Hermitian-metrics-free} we then obtain a one-to-one correspondence between $\GL^+_n(\cA_\theta)$ and Hermitian metrics on $\cX_\theta$. Namely, any Hermitian metric $\acoup{\cdot}{\cdot}$ is uniquely determined by its coefficient matrix $h=(h_{ij})$, with $h_{ij}=\acoup{\dl_i}{\dl_j}$, so that, for all $X=\sum_{i} X^{i} \dl_i$ and $Y=\sum_{i} Y^i \dl_i$ in $\cX_\theta$, we have 
\begin{equation}
 \acoup{X}{Y}= \sum_{i,j} X^{i} \acoup{\dl_i}{\dl_j} (Y^j)^* = \sum_{i,j} X^{i} h_{ij} (Y^j)^*. 
 \label{eq:Riem.Hermitian-h} 
\end{equation}
The coefficient matrix $h$ is an element of $\GL^+_n(\cA_\theta)$. Conversely, any $h \in \GL^+_n(\cA_\theta)$ defines this way a Hermitian metric  $\acoup{\cdot}{\cdot}_h$ on $\cX_\theta$. 

On an ordinary manifold $M$ a Riemannian metric on a given manifold $M$ is a Hermitian metric $\acoup{\cdot}{\cdot}$ on the complexified tangent space $T_\C M$ that takes real values on real vector fields. Equivalently, in any local coordinates, the coefficients $\acoup{\dl_{x_i}}{\dl_{x_j}}$ are real-valued. 

The analogue of the above condition for a Hermitian metric $\acoup{\cdot}{\cdot}$ on $\cX_\theta$ is requiring the coefficients $\acoup{\dl_{i}}{\dl_{j}}$ to be selfadjoint. That is, the matrix of the Hermitian metric has selfadjoint entries. This yields the definition of a Riemannian metric in~\cite{Ro:SIGMA13}. There is a slight issue with this condition since we would like the inverse matrix to satisfy the same condition, so as to have a dual Hermitian metric on forms to have selfadjoint values on ``real'' 1-forms (see Section~\ref{sec:1-forms}). As it turns out, the inverse of a positive invertible matrix with selfadjoint entries need not have selfadjoint entries. For instance, consider a $2\times 2$-matrix of the form, 
\begin{equation*}
 h= \begin{pmatrix}
 1 & a \\
 0 & b
\end{pmatrix}^*\begin{pmatrix}
 1 & a \\
 0 & b
\end{pmatrix}= 
\begin{pmatrix}
 1 & a \\
 a & a^2+b^2
\end{pmatrix},
\end{equation*}where $a$ and $b$ are selfadjoint elements of $\cA_\theta$ and $b$ is invertible. Then $h$ has inverse
\begin{equation*}
 h^{-1} = 
\begin{pmatrix}
 1 + ab^{-2} a&  -ab^{-2} \\
  -b^{-2}a & b^{-2}
\end{pmatrix}
\end{equation*}
In particular, the off-diagonal entries of $h^{-1}$ cannot be selfadjoint when $[a,b]\neq 0$. 

In what follows we denote by $\cA_\theta^\R$ the real subspace of selfadjoint elements of $\cA_\theta$ and denote by $M_m(\cA_\theta^\R)$, $m\geq 2$, the real space of $m\times m$ matrices with entries in $\cA_\theta^\R$. 

\begin{definition}
 $\GL_m(\cA_\theta^\R)$, $m\geq 2$,  consists of matrices $h\in \GL_m(\cA_\theta)$ such that $h$ and $h^{-1}$ both have selfadjoint entries.  
\end{definition}

\begin{definition}
$\GL_m^+(\cA_\theta^\R)$, $m\geq 2$,  consists of matrices in $\GL_m(\cA_\theta^\R)$ that are positive.   
\end{definition}

\begin{remark}\label{rmk:selfadjoint-Hermitian-symmetric}
 If $a=(a_{ij})\in M_m(\cA_\theta^\R)$ and $a$ is selfadjoint, then $a_{ji}=a_{ij}^*=a_{ij}$, i.e., the matrix $a$ is symmetric. In particular, all the elements of 
 $\GL^{+}_m(\cA_\theta^\R)$ are symmetric matrices. 
\end{remark}

\begin{definition}[compare~\cite{Ro:SIGMA13}] 
 A Riemannian metric on $\cA_\theta$ is any Hermitian metric on $\cX_\theta$ whose coefficient matrix is in $\GL_n^+(\cA_\theta^\R)$. 
\end{definition}

We have a one-to-one correspondence between $\GL_n^+(\cA_\theta^\R)$ and Riemannian metrics on $\cA_\theta$ given by~(\ref{eq:Riem.Hermitian-h}). As above, given any $g\in\GL_n^+(\cA_\theta^\R)$, 
we shall denote by $\acoup{\cdot}{\cdot}_g$ the corresponding Riemannian metric, i.e., 
\begin{equation*}
  \acoup{X}{Y}_g= \sum_{i,j} X^{i} g_{ij} (Y^j)^*, \qquad X=\sum_{i} X^{i} \dl_i, \ Y=\sum_{i} Y^i \dl_i.
\end{equation*}
We shall use  this correspondence to identify Riemannian metrics and their matrices. 

On an ordinary manifold $M$ two Riemannian metrics $g$ and $\hat{g}$  are conformally equivalent when they define same angles between vectors in each tangent space $T_x M$, $x\in M$. Equivalently, there is a function $k(x)\in C^\infty(M)$ such that $\hat{g}=k(x)^{2} g$.  

\begin{definition}
 We say that two Riemannian metrics with respective matrices $g=(g_{ij})$ and $\hat{g}=(\hat{g}_{ij})$ are \emph{conformally equivalent} when there is $k\in \cA_\theta^{++}$ such that $\hat{g}=k g k=(kg_{ij}k)$. 
\end{definition}

\subsection{Examples} 
Let us now look at some examples of Riemannian metrics. 

\subsubsection*{Conformal deformations of flat metrics} The Euclidean metric is $g_{ij}=\delta_{ij}$. Connes-Tretkoff~\cite{CT:Baltimore11} considered conformal deformations of this metric, i.e., metrics of the form, 
\begin{equation}
 g_{ij}=k^2 \delta_{ij}, \qquad k\in \cA_\theta^{++}. 
 \label{eq:Riem.conf-def-Euclidean}
\end{equation}
Such metrics have been considered in various subsequent papers as well (see, e.g., \cite{CM:JAMS14, DGK:arXiv18, Fa:JMP15, FK:JNCG12, FK:LMP13,  FK:JNCG13, FK:JNCG15, FGK:JNCG19, LM:GAFA16,  Liu:arXiv18a, Liu:arXiv18b}). As in~\cite{GK:arXiv18} we may also consider conformal 
deformations of more general flat metrics, 
\begin{equation}
 g_{ij}=k^2 g^0_{ij}, \qquad (g^0_{ij})\in \GL^+_n(\R), \quad k\in \cA^{++}. 
 \label{eq:Riem.conf-def-flat}
\end{equation}

\subsubsection*{Product metrics}
A product metric is of the form, 
\begin{equation*}
 g = 
\begin{pmatrix}
 g^{(1)} & 0\\
 0 & g^{(2)}
\end{pmatrix}, \qquad g^{(j)}\in \GL_{m_j}^+(\cA_\theta^\R). 
\end{equation*}
A special class of such metrics are products of conformal deformations of Euclidean metrics, 
\begin{equation*}
 g= \begin{pmatrix}
k_1^2 I_{m_1}  & 0\\
 0 & k_2^2 I_{m_2} 
\end{pmatrix}, \qquad k_j\in \cA_\theta^{++}, \quad k_1\neq k_2. 
\end{equation*}
Here $I_{m_j}$ is the $m_j\times m_j$-identity matrix. Similar kind of metrics have been considered in~\cite{CF:MJM19, DGK:arXiv18, DS:SIGMA15, KS:JMP18}. 

\subsubsection*{Self-compatible Riemannian metrics} 

\begin{definition}\label{def:Riem.self-compatible}
 Two matrices $a=(a_{ij})\in M_p(\cA_\theta)$ and $b=(b_{kl})\in M_q(\cA_\theta)$ are \emph{compatible} when each entry of $a$ commutes with every entry of $b$, i.e., $[a_{ij}, b_{kl}]=0$ for $i,j=1,\ldots p$ and $k,l=1,\ldots, q$. A matrix  $a\in M_m(\cA_\theta)$  is \emph{self-compatible} when it is compatible with itself. 
\end{definition}

\begin{lemma}\label{lem:compatible-matrices}
 Let $a=(a_{ij})\in M_p(\cA_\theta)$ and $b=(b_{kl})\in M_q(\cA_\theta)$ be compatible. Then, for every holomorphic function $f_1$ near $\Sp(a)$ and every holomorphic function $f_2$ near $\Sp(b)$, the matrices $f_1(a)$ and $f_2(b)$ are compatible.  
\end{lemma}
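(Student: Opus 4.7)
The plan is to represent $f_1(a)$ via the Cauchy integral formula of the holomorphic functional calculus in the pre-$C^*$-algebra $M_p(\cA_\theta)$, and exploit the fact that commutation with a scalar-block diagonal matrix survives inversion.

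The key observation is the following. For any indices $k,l$, let $B_{kl}:=b_{kl}I_p\in M_p(\cA_\theta)$. The compatibility hypothesis $a_{ij}b_{kl}=b_{kl}a_{ij}$ for all $i,j$ translates into the matrix identity $aB_{kl}=B_{kl}a$. Hence, for every $z$ in the resolvent set of $a$, multiplying $(zI_p-a)B_{kl}=B_{kl}(zI_p-a)$ on both sides by $R(z):=(zI_p-a)^{-1}$ gives $R(z)B_{kl}=B_{kl}R(z)$. Reading this off entrywise yields $[R(z)_{ij},b_{kl}]=0$ for every $i,j,k,l$.

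Next, as $M_p(\cA_\theta)$ is closed under holomorphic functional calculus in $M_p(A_\theta)$, I would write
\begin{equation*}
 f_1(a)=\frac{1}{2\pi i}\oint_\Gamma f_1(z)R(z)\,dz,
\end{equation*}
where $\Gamma$ is a contour enclosing $\Sp(a)$ and contained in the domain of holomorphy of $f_1$. Because the integrand is continuous with values in $M_p(\cA_\theta)$, and because for any fixed $b_{kl}$ the bracket $x\mapsto[x,b_{kl}]$ is continuous on $\cA_\theta$, the commutator passes under the integral sign. Combined with the previous step, this shows $[f_1(a)_{ij},b_{kl}]=0$ for all indices, i.e., $f_1(a)$ and $b$ are compatible.

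Finally, I would apply the very same argument with the pair $(b,f_2)$ in place of $(a,f_1)$, using the compatibility of $f_1(a)$ with $b$ just established, to deduce that $f_2(b)$ is compatible with $f_1(a)$. I do not expect any serious obstacle: the only point that requires a quick justification is the interchange of the contour integral with the commutator, which is immediate from continuity in the Fréchet topology of $\cA_\theta$.
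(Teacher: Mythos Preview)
Your proof is correct and follows essentially the same approach as the paper: both arguments observe that compatibility means each $b_{kl}$ (viewed as the scalar matrix $b_{kl}I_p$) commutes with $a$ in $M_p(\cA_\theta)$, and then invoke the fact that holomorphic functional calculus preserves commutation to conclude $f_1(a)$ commutes with each $b_{kl}$, before swapping roles. The only difference is that the paper states this preservation as a black-box fact in one sentence, whereas you unpack it explicitly via the resolvent and the Cauchy integral formula; your version is more self-contained.
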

\begin{proof}
By assumption $[a,b_{kl}]=0$ for $k,l=1,\ldots,p$. Thus, given any holomorphic function $f_1$ near $\Sp(a)$, the coefficients $b_{kl}$ also commute with the matrix $f_1(a)$, i.e., $f_1(a)$ and $b$ are compatible. Substituting $b$ for $a$ and $f_1(a)$ for $b$ also shows that, given any holomorphic function $f_2$ near $\Sp(b)$, the matrices $f_2(b)$ and $f_1(a)$ are compatible. The proof is complete.    
\end{proof}

\begin{lemma}\label{lem:selfadjoint-selfcompatible}
 Let $g\in \GL_m^+(\cA_\theta)\cap M_m(\cA_\theta^\R)$ be self-compatible. Then $g\in \GL^+_m(\cA_\theta^\R)$. 
\end{lemma}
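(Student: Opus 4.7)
The plan is to reduce to a commutative setting via Gelfand duality. Since $g$ is self-compatible with selfadjoint entries $g_{ij}$, these entries generate a commutative unital $*$-subalgebra of $\cA_\theta$. I denote by $\bar{\cB}$ its closure inside the ambient $C^*$-algebra $A_\theta$. Then $\bar{\cB}$ is a unital commutative $C^*$-subalgebra of $A_\theta$, and Gelfand's theorem provides an isomorphism $\bar{\cB}\cong C(X)$ for some compact Hausdorff space $X$, under which $M_m(\bar{\cB})\cong C(X,M_m(\C))$. The matrix $g$ corresponds to a continuous map $\tilde{g}:X\to M_m(\C)$; the selfadjointness of the $g_{ij}$'s makes each $\tilde{g}(x)$ have real entries, while Remark~\ref{rmk:selfadjoint-Hermitian-symmetric} forces $g$ to be symmetric. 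Hence every $\tilde{g}(x)$ is a real symmetric matrix.

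Next, I would invoke spectral permanence: the spectrum of $g$ in $M_m(\bar{\cB})$ agrees with its spectrum in $M_m(A_\theta)$, which lies in $(0,\infty)$ since $g\in\GL_m^+(\cA_\theta)$. Thus each $\tilde{g}(x)$ is positive definite, and the pointwise inverse $x\mapsto \tilde{g}(x)^{-1}$ is a continuous map into real symmetric positive definite matrices. This produces an element $h\in M_m(\bar{\cB})$ with selfadjoint entries and satisfying $gh=hg=I_m$ in $M_m(A_\theta)$.

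The key step---and the main obstacle---is to identify $h$ with the inverse $g^{-1}\in M_m(\cA_\theta)$ already given by hypothesis. This identification follows from the uniqueness of inverses in the overarching $C^*$-algebra $M_m(A_\theta)$: both $h$ and $g^{-1}$ satisfy the same inversion relation there, so they coincide. Consequently the entries of $g^{-1}$ are both in $\cA_\theta$ (by hypothesis) and selfadjoint (as elements of $\bar{\cB}$, whose involution is inherited from $A_\theta$). This yields $g^{-1}\in M_m(\cA_\theta^\R)$, and hence $g\in\GL_m^+(\cA_\theta^\R)$. A streamlined variant would bypass Gelfand altogether and use the classical adjugate formula $g^{-1}=\det(g)^{-1}\op{adj}(g)$ inside the commutative algebra $\bar{\cB}$: the entries of $\op{adj}(g)$ are integer polynomials in the commuting selfadjoint $g_{ij}$'s and are therefore selfadjoint, while $\det(g)\in\cA_\theta$ is invertible by positivity of $\tilde{g}$, with $\det(g)^{-1}\in\cA_\theta$ furnished by holomorphic functional calculus.
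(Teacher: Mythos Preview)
Your argument is correct, but it takes a different route from the paper. The paper avoids Gelfand duality entirely and instead gives a short direct computation: it sets $h=(g^{-1})^t$, observes that the $(i,j)$-entry of $h$ is $(g^{ij})^*$ (because $g^{-1}$ is selfadjoint), invokes Lemma~\ref{lem:compatible-matrices} to conclude that $g$ and $g^{-1}$ (hence $g$ and $h$) are compatible, and then uses the symmetry of $g$ to verify by hand that the $(i,j)$-entry of $gh$ equals $\sum_k h_{kj}g_{ik}=\sum_k g^{jk}g_{ki}=\delta_{ji}$, so $h=g^{-1}$ and the entries of $g^{-1}$ are selfadjoint. Your Gelfand-theoretic reduction is conceptually transparent and makes the ``commutative world'' visible, at the cost of appealing to spectral permanence for $C^*$-subalgebras; the paper's proof is shorter and stays within the algebra, relying instead on the holomorphic-functional-calculus compatibility lemma it has already set up. Your adjugate variant is in fact the closest in spirit to the paper's computation, since both exploit that the inverse can be written using only polynomial (or holomorphic) operations on the commuting selfadjoint entries.
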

\begin{proof}
 Set $g^{-1}=(g^{ij})$ and $h=(g^{-1})^t$. As $g$ is selfadjoint, its inverse $g^{-1}$ is selfadjoint, and so the $(i,j)$-entry of $h$ is $h_{ij}=g^{ji}=(g^{ij})^*$. Moreover, as $g$ is self-compatible, it follows from Lemma~\ref{lem:compatible-matrices} that $g$ and $g^{-1}$ are compatible, and so $g$ and $h$ are compatible as well. In addition, by Remark~\ref{rmk:selfadjoint-Hermitian-symmetric} the fact that $g$ is selfadjoint and has selfadjoint metrics implies that $g$ is symmetric. Bearing all this mind we see that the $(i,j)$-entry of $gh$ is equal to
\begin{equation*}
 \sum_{1\leq k \leq m} g_{ik}h_{kj}=  \sum_{1\leq k \leq m} h_{kj}g_{ik} =   \sum_{1\leq k \leq m} g^{jk}g_{ki}=\delta_{ji}.  
\end{equation*}
Thus, $gh=1$, and so $h=g^{-1}$. As $h_{ij}=(g^{ij})^*$, this shows that the entries of $g^{-1}$ are selfadjoint, and hence $g\in  \GL^+_m(\cA_\theta^\R)$. The proof is complete. 
\end{proof}

We mention the following consequence of Lemma~\ref{lem:selfadjoint-selfcompatible}. 

\begin{lemma}\label{lem:Riem.self-compatible-conjugation}
 Let $g\in \GL^+_m(\cA_\theta^\R)$ and $u\in \GL_m(\cA_\theta)\cap M_m(\cA_\theta^\R)$ be compatible and self-compatible. Then $u^t gu\in  \GL^+_m(\cA_\theta^\R)$. 
\end{lemma}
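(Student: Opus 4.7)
The plan is to deduce the statement directly from Lemma~\ref{lem:selfadjoint-selfcompatible}, by checking its three hypotheses for the matrix $u^t g u$: namely, that $u^t g u$ belongs to $\GL_m^+(\cA_\theta)$, that its entries are selfadjoint, and that $u^t g u$ is self-compatible.

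For the first hypothesis, the key observation is that since every entry of $u$ is selfadjoint, the involution on $M_m(\cA_\theta)$ satisfies $u^* = u^t$, and hence $u^t g u = u^* g u$. This matrix is manifestly selfadjoint because $g$ is, and it is invertible because $u$, $g$, and $u^t$ all are. For positivity, I would use the holomorphic functional calculus on $M_m(\cA_\theta)$ to extract a selfadjoint square root $g^{1/2}\in \GL_m^+(\cA_\theta)$, so that $u^* g u = (g^{1/2} u)^*(g^{1/2} u) \geq 0$.

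For the remaining two hypotheses I would exploit the joint commutativity provided by the assumptions. Since $g$ and $u$ are each self-compatible and mutually compatible, the $*$-subalgebra $\cB \subset \cA_\theta$ generated by all the entries of $g$ together with those of $u$ is commutative. The entries $(u^t g u)_{ij} = \sum_{k,l} u_{ki} g_{kl} u_{lj}$ therefore all lie in $\cB$, which yields self-compatibility immediately. Selfadjointness of each entry then comes from reordering inside the commutative $*$-algebra $\cB$: using that each $u_{ki}$ and each $g_{kl}$ is selfadjoint, one computes $\bigl((u^t g u)_{ij}\bigr)^* = \sum_{k,l} u_{lj} g_{kl} u_{ki} = \sum_{k,l} u_{ki} g_{kl} u_{lj} = (u^t g u)_{ij}$.

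The main subtle point in this argument is the reshuffling used to establish selfadjointness of the entries: the involution on $M_m(\cA_\theta)$ reverses the order of the factors inside each summand, and without the joint commutativity afforded by the compatibility hypotheses there would be no way to recover the original ordering. Once the three verifications are in place, Lemma~\ref{lem:selfadjoint-selfcompatible} applies directly and delivers $u^t g u \in \GL_m^+(\cA_\theta^\R)$.
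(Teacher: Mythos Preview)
Your proposal is correct and follows essentially the same route as the paper's proof: both arguments reduce to Lemma~\ref{lem:selfadjoint-selfcompatible} by verifying that $u^tgu=u^*gu$ lies in $\GL_m^+(\cA_\theta)$, has selfadjoint entries (via the commutativity furnished by the compatibility hypotheses), and is self-compatible. Your explicit use of $g^{1/2}$ for positivity and the framing via the commutative $*$-subalgebra $\cB$ are minor expository variations on the paper's more terse presentation, not substantive differences.
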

\begin{proof}
Set $h=u^tgu$.  As $u$ has selfadjoint entries, its adjoint is $u^t$, and so $h=u^*gu$ is both invertible and positive, i.e., $u^tgu\in \GL^+_m(\cA_\theta)$. The $(i,j)$-entry of $h$ is $h_{ij}= \sum_{k,l} u_{ki}g_{kl}u_{lj}$. The assumption that $g$ and $u$ are compatible self-compatible matrices then ensures us that $h$ is self-compatible. Moreover, as the entries of $g$ and $u$ are selfadjoint, we also have 
\begin{equation*}
 h_{ij}^* = \big( \sum_{1\leq k,l\leq m} u_{ki}g_{kl}u_{lj}\big)^*= \sum_{1\leq k,l\leq m} u_{lj}g_{kl}u_{ki}= \sum_{1\leq k,l\leq m} u_{ki}g_{kl}u_{lj}=h_{ij}. 
\end{equation*}
Therefore, we see that $h$ is a self-compatible element of $\GL_m^+(\cA_\theta)\cap M_m(\cA_\theta^\R)$. It then follows from Lemma~\ref{lem:selfadjoint-selfcompatible} that $h\in \GL^+_m(\cA_\theta^\R)$. The proof is complete. 
\end{proof}

\begin{definition}
 A \emph{self-compatible Riemannian metric} is a Riemannian metric whose matrix is self-compatible.  
\end{definition}

\begin{remark}
As it follows from Lemma~\ref{lem:selfadjoint-selfcompatible},  a Hermitian metric on $\cX_\theta$ is a self-compatible Riemannian metric if and only if its matrix has selfadjoint entries and is self-compatible.  
\end{remark}

\begin{example}
 Conformal deformations of flat metrics as in~(\ref{eq:Riem.conf-def-Euclidean})--(\ref{eq:Riem.conf-def-flat}) are self-compatible Riemannian metrics. 
\end{example}

\begin{example}
 The \emph{functional metrics} introduced in~\cite{GK:arXiv18} are also examples of self-compatible Riemannian metrics. They are metrics of the form, 
 \begin{equation*}
 g_{ij}= g_{ij}(h),
\end{equation*}
where $h\in \cA_\theta$ and $t\rightarrow (g_{ij}(t))$ is a smooth map from a neighborhood of $\Sp(h)$ in $\R$ to $\GL_n^+(\R)$. This construction uses the observation that $\cA_\theta$ is closed under the $C^\infty$-calculus of its selfadjoint elements. 
\end{example}

Taking conformal deformations of self-compatible metrics and products of such metrics provides us with further examples of Riemannian metrics. These metrics are of the form, 
\begin{equation*}
 g = 
\begin{pmatrix}
 k_1 g^{(1)} k_1 &  & \\
 & \ddots & \\
 & & k_\ell g^{(\ell)} k_\ell
\end{pmatrix},
\end{equation*}
where $k_j \in \cA_\theta^{++}$ and $g^{(j)}\in \GL_{m_j}^+(\cA_\theta)\cap M_{m_j}(\cA_\theta)$ is self-compatible. All the previous examples of Riemannian metrics of this section are of this form. 

\section{Densities and Inner Products on Hermitian Modules}\label{sec:densities} 
In this section, after discussing the noncommutative analogues of densities on $\cA_\theta$, we explain how this enables us to define Hermitian inner products on Hermitian modules over $\cA_\theta$. 

\subsection{Densities on noncommutative tori} 
On the ordinary torus $\T^n$ a positive density (in the sense of differential geometry) is given by the integration against a positive function. The analogous notion on the noncommutative torus $\cA_\theta$ is given by elements in $\cA_\theta^{++}$ and the corresponding weights. Namely, any $\nu \in \cA_\theta^{++}$ defines a weight $\varphi_\nu:A_\theta \rightarrow \C$ by
\begin{equation}
 \varphi_\nu (x) = (2\pi)^{n} \tau (x \nu), \qquad x\in A_\theta.
 \label{eq:densities.weight} 
\end{equation}
This is a weight since $\varphi_\nu(x^*x)=\tau (x^*x\nu) = \tau[ (x\sqrt{\nu})^* (x\sqrt{\nu})]\geq 0$. In the terminology of~\cite{CM:JAMS14, CT:Baltimore11} such a weight is called a \emph{conformal weight}. 

\begin{example}
 Suppose that $\theta=0$. In this case densities are positive elements of $\cA_0=C^\infty(\T^n)$, i.e., positive $C^\infty$-functions on $\T^n$. In addition, $(2\pi)^n\tau$ agrees with the integration with respect to the Lebesgue measure of $\T^n$. Thus, given any $\nu \in C^\infty(\T^n)$, $\nu>0$, for all $f$ in $A_0=C(\T^n)$, we have
\begin{equation}
 \varphi_\nu(f)=(2\pi)^n \tau[f\nu]= \int_{\T^n} f(x)\nu(x)dx. 
\end{equation}
Therefore, we recover the usual notion of smooth densities on $C^\infty$-manifolds. 
\end{example}

\begin{lemma}\label{lem:densities.weight-tau}
 For all $x\in A_\theta^+$, we have 
 \begin{equation}
 \|\nu^{-1}\|^{-1} \tau (x) \leq (2\pi)^{-n} \varphi_\nu(x) \leq \|\nu\| \tau(x). 
 \label{eq:densities.weight-tau}
\end{equation}
\end{lemma}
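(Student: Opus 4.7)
The plan is to reduce the inequality to two spectral bounds on $\nu$ combined with positivity of the trace $\tau$ on positive elements. Specifically, since $\nu \in \cA_\theta^{++}$, both $\nu$ and $\nu^{-1}$ are positive elements of the $C^*$-algebra $A_\theta$, and I expect to show
\begin{equation*}
\|\nu^{-1}\|^{-1}\cdot 1 \leq \nu \leq \|\nu\|\cdot 1 \quad \text{in } A_\theta.
\end{equation*}
The upper bound is standard: for any selfadjoint $a$ in a $C^*$-algebra, $\Sp(a)\subset[-\|a\|,\|a\|]$, and if $a\geq 0$ then $\Sp(a)\subset[0,\|a\|]$, which gives $a\leq \|a\|$. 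Applying this to $\nu$ gives $\nu\leq \|\nu\|$. For the lower bound, observe that $\Sp(\nu^{-1})\subset[0,\|\nu^{-1}\|]$, so by spectral mapping $\Sp(\nu)\subset [\|\nu^{-1}\|^{-1},\infty)$, which yields $\nu\geq \|\nu^{-1}\|^{-1}$.

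Next I would use the fact that $\tau$ is a positive trace on $A_\theta$ to transport these scalar inequalities under multiplication by a positive element. Concretely, for $x \in A_\theta^+$ and $c\in A_\theta^+$, the tracial property gives
\begin{equation*}
\tau(xc) = \tau\bigl(\sqrt{x}\,c\,\sqrt{x}\bigr) \geq 0,
\end{equation*}
since $\sqrt{x}\,c\,\sqrt{x}$ is positive in $A_\theta$ and $\tau$ is a positive state. Consequently, whenever $a,b \in A_\theta$ are selfadjoint with $a\leq b$ and $x\in A_\theta^+$, we obtain $\tau(xa)\leq \tau(xb)$.

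Putting these together, applying this monotonicity to the double inequality $\|\nu^{-1}\|^{-1}\leq \nu \leq \|\nu\|$ yields
\begin{equation*}
\|\nu^{-1}\|^{-1}\tau(x) \leq \tau(x\nu) \leq \|\nu\|\,\tau(x), \qquad x\in A_\theta^+.
\end{equation*}
Multiplying through by $(2\pi)^n$ and invoking the definition $\varphi_\nu(x)=(2\pi)^n\tau(x\nu)$ from~(\ref{eq:densities.weight}) gives the desired inequality~(\ref{eq:densities.weight-tau}). There is no real obstacle here; the only mild subtlety is to remember that $\|\cdot\|$ on $\nu$ and $\nu^{-1}$ refers to the $C^*$-norm inherited from $A_\theta$, which is well defined because $\cA_\theta$ is a pre-$C^*$-algebra in $A_\theta$ closed under holomorphic functional calculus, so $\nu^{-1}$ computed inside $\cA_\theta$ coincides with the $C^*$-inverse in $A_\theta$.
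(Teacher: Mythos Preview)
Your proof is correct and follows essentially the same approach as the paper: both arguments first establish the spectral bounds $\|\nu^{-1}\|^{-1}\leq \nu \leq \|\nu\|$ and then use the positivity and trace property of $\tau$ to transfer these to the desired inequality. The only cosmetic difference is that the paper writes a positive element as $x^*x$ and conjugates $\nu$ by $x$, whereas you take $x\in A_\theta^+$ directly and conjugate by $\sqrt{x}$; these are equivalent manipulations.
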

\begin{proof}
As $\nu$ is invertible and positive, this is a selfadjoint element whose spectrum is contained in $[ \|\nu^{-1}\|^{-1},\|\nu\|]$, and so $ \|\nu^{-1}\|^{-1}\leq \nu \leq \|\nu\|$. Let $x\in A_\theta$. Then $ \|\nu^{-1}\|^{-1}xx^*\leq x\nu x^* \leq \|\nu\|xx^*$. As $\tau$ is a positive, the second inequality implies that we have 
\begin{equation*}
 \|\nu\|\tau(x^*x)=\|\nu\|\tau(xx^*)\geq \tau( x\nu x^*) = \tau(x^* x\nu)=(2\pi)^{-n} \varphi_\nu(x^*x) 
\end{equation*}
Likewise, we have $ \|\nu^{-1}\|^{-1}\tau(x^*x)\leq (2\pi)^{-n} \varphi_\nu(x^*x)$. This shows that the inequalities~(\ref{eq:densities.weight-tau}) hold for any positive element of $A_\theta^+$. The proof is complete. 
\end{proof}

We let $\cH_\nu$ be the Hilbert space given by the completion of $\cA_\theta$ with respect of the pre-inner product, 
\begin{equation*}
 \scal{u}{v}_\nu:= (2\pi)^{-n} \varphi_\nu(v^*u)=\tau(v^*u \nu)= \tau (u \nu v^*), \qquad u,v\in \cA_\theta. 
\end{equation*}
This inner product is a scalar multiple of the inner product obtained from the GNS construction for the state $\tilde{\varphi}_\nu(a):= \frac1{\varphi_\nu(1)}\varphi_\nu(a)= \frac1{\tau(\nu)} \tau(a\nu)$, $a\in A_\theta$. In particular, the multiplication law of $\cA_\theta$ uniquely extends to a left action of $\cA_\theta$ on $\cH_\nu$ by bounded operators. In addition, it follows from Lemma~\ref{lem:densities.weight-tau} that, for all $u \in A_\theta$, we have 
\begin{equation*}
 \|\nu^{-1}\|^{-1} \scal{u}{u} \leq \scal{u}{u}_\nu \leq \|\nu\| \scal{u}{u}.
\end{equation*}
Therefore, the completions with respect to the pre-inner products $\scal{\cdot}{\cdot}$ and $\scal{\cdot}{\cdot}_\nu$ give rise to the same locally convex space. An explicit unitary isomorphism from $\cH_\nu$ onto $\cH_\theta$ is given by the right-multiplication by $\sqrt{\nu}$, since, for all $u,v \in A_\theta$, we have
\begin{equation*}
 \scal{u}{v}_\nu = \tau\big[ (u\sqrt{\nu}) (v\sqrt{\nu})^*\big]=\scal{u\sqrt{\nu}}{v\sqrt{\nu}}. 
\end{equation*}

We also denote by $\cH_\nu^\opp$ the completion of $\cA_\theta$ with respect to the ``opposite" pre-inner product, 
\begin{equation*}
 \scal{u}{v}_\nu^\opp:= \varphi_\nu(v^*\circ u)=\tau(v^*\circ u \circ \nu)= \tau ( \nu u v^*)= \tau (v^* \nu u ), \qquad u,v\in 
 \cA_\theta. 
\end{equation*}
This inner product is a constant multiple of the inner product that arises from the GNS construction for the state $\tilde{\varphi}_\nu$ and the opposite algebra $\cA_\theta^\opp$. As above $\cH_\nu^\opp$ and its locally convex topology do not depend on $\nu$ and an explicit unitary isomorphism from $\cH_\nu^\opp$ onto $\cH_\theta$ is given by the left-multiplication by $\sqrt{\nu}$ (i.e., the right-multiplication with respect to the opposite product). 

Let $\sigma_\nu: A_\theta \rightarrow A_\theta$ be the inner automorphism given by
\begin{equation*}
 \sigma_\nu(x) = \nu^{\frac12} x \nu^{-\frac12}, \qquad x\in \cA_\theta,
\end{equation*}
Then it follows from the discussion above that, for all $u$ and $v$ in $A_\theta$, we have 
\begin{equation*}
 \scal{u}{v}_\nu^\opp = \scal{\nu^{\frac12} u}{ \nu^{\frac12} v} =  \scal{\nu^{\frac12} u\nu^{-\frac12}}{\nu^{\frac12} v \nu^{-\frac12}}_\nu = \scal{\sigma_\nu(u)}{\sigma_\nu(v)}_\nu. 
\end{equation*}
Therefore, we arrive at the following statement. 

\begin{proposition}
 The inner automorphism $\sigma_\nu$ uniquely extends to a unitary isomorphism from $\cH_\varphi^\opp$ onto $\cH_\varphi$. In particular, composing the right $*$-action of $A_\theta$ on $\cH_\nu^\opp$ with $\sigma_\nu$ gives rise to a right $*$-action of $A_\theta$ on $\cH_\nu$. 
\end{proposition}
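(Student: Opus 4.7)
The first assertion is essentially immediate from the identity
\[
\scal{u}{v}_\nu^\opp = \scal{\sigma_\nu(u)}{\sigma_\nu(v)}_\nu, \qquad u,v\in \cA_\theta,
\]
displayed just above the proposition, which shows that $\sigma_\nu$ restricted to $\cA_\theta$ is an isometric $\C$-linear map from the pre-Hilbert space $(\cA_\theta,\scal{\cdot}{\cdot}_\nu^\opp)$ into $(\cA_\theta,\scal{\cdot}{\cdot}_\nu)$. First I would note that $\nu^{\frac12}$ and $\nu^{-\frac12}$ both lie in $\cA_\theta^{++}$, since $\cA_\theta$ is closed under holomorphic functional calculus; thus $\sigma_\nu$ is actually a bijection of $\cA_\theta$ onto itself, with inverse $x\mapsto \nu^{-\frac12}x\nu^{\frac12}$. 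Because $\cA_\theta$ is dense in both $\cH_\nu^\opp$ and $\cH_\nu$ by construction, applying the standard bounded-linear-transformation extension to both $\sigma_\nu|_{\cA_\theta}$ and its inverse produces a unique unitary isomorphism $\cH_\nu^\opp \to \cH_\nu$ extending $\sigma_\nu|_{\cA_\theta}$, proving the first assertion.

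For the second part, let $\rho^\opp:A_\theta \to \cL(\cH_\nu^\opp)$ denote the given right $*$-action. Transporting by the unitary just constructed, I would define the candidate right action on $\cH_\nu$ by
\[
\rho(x) := \sigma_\nu \circ \rho^\opp(x) \circ \sigma_\nu^{-1}, \qquad x\in A_\theta.
\]
Each $\rho(x)$ is automatically a bounded operator on $\cH_\nu$. That $x\mapsto \rho(x)$ is a right $*$-representation of $A_\theta$ is then purely formal: conjugation by a unitary preserves products and adjoints, so both $\rho(xy)=\rho(y)\rho(x)$ and $\rho(x)^*=\rho(x^*)$ are inherited from the corresponding relations for $\rho^\opp$.

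I do not anticipate any real obstacle. All the analytic content is concentrated in the isometry identity displayed just before the proposition, and what remains is the routine extension of an isometry on a dense subspace to a unitary, together with the formal observation that unitary transport preserves a $*$-representation. The only minor step worth explicit mention is the bijectivity of $\sigma_\nu$ on $\cA_\theta$, which reduces to the presence of $\nu^{\pm \frac12}$ in $\cA_\theta$ via holomorphic functional calculus.
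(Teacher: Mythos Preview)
Your proposal is correct and matches the paper's approach exactly: the paper presents the proposition as an immediate consequence of the displayed identity $\scal{u}{v}_\nu^\opp = \scal{\sigma_\nu(u)}{\sigma_\nu(v)}_\nu$ without further proof, and you have simply spelled out the routine density/extension argument and unitary transport that make this precise.
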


\begin{remark}
 The unitary anti-linear involution $J_\nu(a)=\sigma_\nu(a^*)= \nu^{\frac12} a^* \nu^{-\frac12}$ and the operator $\mathbf{\Delta}(a)=\sigma_\nu^2(a)= \nu a \nu^{-1}$ are the respective Tomita involution and modular operator of the GNS construction associated with the state $\tilde{\varphi}_\nu$. In particular, we have $J_\nu a^* J_\nu=\sigma_\nu(a)$ for all $a\in A_\theta$. 
\end{remark}

\subsection{Inner products on Hermitian modules} 
Given $\nu \in \cA_\theta^{++}$, let $(\cE, \acoup{\cdot}{\cdot})$ be a Hermitian finitely generated projective (left) module over $\cA_\theta$.  Define the map $\scal{\cdot}{\cdot}_\nu:\cE \times \cE \rightarrow \C$ by 
\begin{equation}
 \scal{\xi}{\nu}_\nu = (2\pi)^{-n} \varphi_\nu \big[\acoup{\xi}{\eta}\big]= \tau \big[\acoup{\xi}{\eta} \nu\big], \qquad \xi,\eta \in \cE.
 \label{eq:densities.inner product-cEnu} 
\end{equation}

\begin{lemma}
 $\scal{\cdot}{\cdot}_\nu$ is a pre-inner product on $\cE$. 
\end{lemma}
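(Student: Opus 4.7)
The plan is to verify the three defining properties of a pre-inner product: sesquilinearity, Hermitian symmetry, and positivity. Each follows by combining a corresponding property of the Hermitian metric $\acoup{\cdot}{\cdot}$ with a property of the trace $\tau$ and the positivity of $\nu$.

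For sesquilinearity over $\C$, I would note that $\tau$ is $\C$-linear and that the $\cA_\theta$-sesquilinearity of $\acoup{\cdot}{\cdot}$ specializes to $\C$-sesquilinearity in its two entries, so $\scal{\cdot}{\cdot}_\nu=\tau[\acoup{\cdot}{\cdot}\nu]$ is linear in $\xi$ and conjugate-linear in $\eta$.

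For Hermitian symmetry, I would start from the identity $\acoup{\eta}{\xi}=\acoup{\xi}{\eta}^*$ recorded in the remark following the definition of a Hermitian metric. Since $\nu^*=\nu$, we have
\begin{equation*}
\bigl(\acoup{\xi}{\eta}\nu\bigr)^*=\nu\,\acoup{\xi}{\eta}^*=\nu\,\acoup{\eta}{\xi}.
\end{equation*}
Using $\tau(a^*)=\overline{\tau(a)}$ together with the trace property $\tau(ab)=\tau(ba)$ then yields
\begin{equation*}
\overline{\scal{\xi}{\eta}_\nu}=\overline{\tau[\acoup{\xi}{\eta}\nu]}=\tau\bigl[\nu\acoup{\eta}{\xi}\bigr]=\tau\bigl[\acoup{\eta}{\xi}\nu\bigr]=\scal{\eta}{\xi}_\nu.
\end{equation*}

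For positivity, I would invoke the positivity of the Hermitian metric, which gives $\acoup{\xi}{\xi}\in A_\theta^+$. Hence there exists $c\in A_\theta$ with $\acoup{\xi}{\xi}=c^*c$. Since $\nu\in\cA_\theta^{++}\subset A_\theta^+$, its square root $\sqrt{\nu}$ is a selfadjoint element of $A_\theta$, and the trace property gives
\begin{equation*}
\scal{\xi}{\xi}_\nu=\tau\bigl[c^*c\,\sqrt{\nu}\sqrt{\nu}\bigr]=\tau\bigl[\sqrt{\nu}\,c^*c\,\sqrt{\nu}\bigr]=\tau\bigl[(c\sqrt{\nu})^*(c\sqrt{\nu})\bigr]\geq 0,
\end{equation*}
where the final inequality uses that $\tau$ is a positive trace. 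This establishes all three properties and completes the argument. No step looks genuinely difficult; the only mild subtlety is the small rearrangement with $\sqrt{\nu}$ that moves the density into a manifestly positive form under $\tau$, and this is exactly where the choice $\nu\in\cA_\theta^{++}$ (rather than a merely positive $\nu$) is used through Lemma~\ref{lem:Pos.c A++-holomorphic} to guarantee $\sqrt{\nu}\in\cA_\theta$.
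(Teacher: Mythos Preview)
Your argument establishes sesquilinearity, Hermitian symmetry, and nonnegativity, but it omits the step that the paper treats as the main content: definiteness. In this paper's usage, a ``pre-inner product'' is a genuine inner product on a not-yet-complete space (see the proof, which explicitly says ``It only remains to show that $\scal{\cdot}{\cdot}$ is definite''), so you must show that $\scal{\xi}{\xi}_\nu=0$ forces $\xi=0$. Your computation gives $\scal{\xi}{\xi}_\nu=\tau[(c\sqrt{\nu})^*(c\sqrt{\nu})]$ with $\acoup{\xi}{\xi}=c^*c$; faithfulness of $\tau$ and invertibility of $\sqrt{\nu}$ then yield $\acoup{\xi}{\xi}=0$, but from there you still need $\acoup{\xi}{\xi}=0\Rightarrow\xi=0$. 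The paper's axioms for a Hermitian metric assert positivity and non-degeneracy (the map $\xi\mapsto\acoup{\cdot}{\xi}$ is an isomorphism), not definiteness directly, so this last implication requires an argument.

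The paper handles definiteness by exploiting the hypothesis---not used anywhere in your proof---that $\cE$ is finitely generated projective: writing $\cE=e\cA_\theta^m$ and invoking Corollary~\ref{cor:Positivity.equivalence-proj} to replace the given Hermitian metric by the restriction of the canonical one, it reduces to the known inner product on $\cH_\theta^m$. An alternative you could pursue is a Cauchy--Schwarz inequality for the $\cA_\theta$-valued form $\acoup{\cdot}{\cdot}$ to deduce $\acoup{\eta}{\xi}=0$ for all $\eta$ from $\acoup{\xi}{\xi}=0$, and then apply non-degeneracy; but this needs to be stated and justified, not left implicit.
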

\begin{proof}
The positivity of the weight $\varphi_\nu$ and the sesquilinearity and positivity of the Hermitian metric $\acoup{\cdot}{\cdot}$ ensure us  
that $\scal{\cdot}{\cdot}$ is a  positive sesquilinear map. It only remains to show that  $\scal{\cdot}{\cdot}$ is definite. 

Without loss of generality we may assume that $\cE=e\cA_\theta^m$ with $e\in M_m(\cA_\theta)$, $e^2=e$. We know by Corollary~\ref{cor:Positivity.equivalence-proj} that the Hermitian metric is equivalent to the restriction of the canonical Hermitian metric of $\cA^m_\theta$, and so we may further assume that the latter is actually the Hermitian metric of $\cE=e\cA_\theta^m$. In this case $\scal{\cdot}{\cdot}$ is just the restriction of the inner product of $\cH_\theta^m$, and so it is positive definite. This completes the proof. 
\end{proof}

\begin{definition}
 $\cH_\nu(\cE)$ is the Hilbert space completion of $\cE$ with respect to the pre-inner product  $\scal{\cdot}{\cdot}_\nu$. 
\end{definition}

\begin{lemma}\label{lem:inner product-Hermitian}
 $\cH_\nu(\cE)$ and its locally convex topology depend neither on $\nu$, nor on the Hermitian metric of $\cE$. 
\end{lemma}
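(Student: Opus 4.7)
The strategy is to show that different choices of $\nu\in \cA_\theta^{++}$ and of Hermitian metric on $\cE$ all produce \emph{equivalent} pre-inner products on $\cE$, in the sense that each is bounded from above and from below by a positive scalar multiple of the other. Any two equivalent pre-inner products have the same Hilbert space completion as a topological vector space, so this will yield the claim. The plan is to treat the two dependencies separately and then combine them.

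For the dependence on $\nu$, I would fix the Hermitian metric $\acoup{\cdot}{\cdot}$ and compare $\scal{\cdot}{\cdot}_\nu$ to the pre-inner product $\scal{\xi}{\eta}_\circ := \tau\bigl[\acoup{\xi}{\eta}\bigr]$ corresponding (up to the $(2\pi)^{-n}$ normalization) to $\nu=1$. Since $\acoup{\xi}{\xi}\in \cA_\theta\subset A_\theta^+$, Lemma~\ref{lem:densities.weight-tau} applied to $x=\acoup{\xi}{\xi}$ gives
\begin{equation*}
\|\nu^{-1}\|^{-1}\scal{\xi}{\xi}_\circ \ \leq \ \scal{\xi}{\xi}_\nu \ \leq \ \|\nu\|\,\scal{\xi}{\xi}_\circ \qquad \forall \xi\in\cE,
\end{equation*}
since $\scal{\xi}{\xi}_\nu=(2\pi)^{-n}\varphi_\nu\bigl[\acoup{\xi}{\xi}\bigr]$. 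As $\nu$ and $\nu^{-1}$ both lie in $A_\theta$ and are bounded, the constants $\|\nu^{-1}\|^{-1}$ and $\|\nu\|$ are positive and finite, so $\scal{\cdot}{\cdot}_\nu$ and $\scal{\cdot}{\cdot}_\circ$ are equivalent pre-inner products on $\cE$.

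For the dependence on the Hermitian metric, I would fix $\nu$ and let $\acoup{\cdot}{\cdot}_1$, $\acoup{\cdot}{\cdot}_2$ be two Hermitian metrics on $\cE$. Since $\cE$ is finitely generated projective, Corollary~\ref{cor:Positivity.equivalence-proj} yields constants $c_1,c_2>0$ with $c_1\acoup{\xi}{\xi}_1\leq \acoup{\xi}{\xi}_2\leq c_2\acoup{\xi}{\xi}_1$ in $A_\theta^+$ for all $\xi\in \cE$. The state $\tilde\varphi_\nu=\varphi_\nu/\varphi_\nu(1)$ (equivalently, the positive linear functional $u\mapsto \tau(u\nu)$) is order-preserving, so applying it to these inequalities produces the corresponding equivalence of $\scal{\cdot}{\cdot}_{\nu,1}$ and $\scal{\cdot}{\cdot}_{\nu,2}$ with constants $c_1$ and $c_2$.

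Chaining the two steps, any two choices $(\nu_1,\acoup{\cdot}{\cdot}_1)$ and $(\nu_2,\acoup{\cdot}{\cdot}_2)$ give pre-inner products equivalent to the common reference $\scal{\xi}{\eta}_\circ=\tau[\acoup{\xi}{\eta}]$ for a fixed auxiliary Hermitian metric, hence equivalent to each other. Therefore the associated completions coincide as locally convex topological vector spaces. There is no real obstacle here: the work has already been done in Lemma~\ref{lem:densities.weight-tau} and Corollary~\ref{cor:Positivity.equivalence-proj}, and only mild care is needed to ensure that $\acoup{\xi}{\xi}$ is treated as an element of $A_\theta^+$ so that Lemma~\ref{lem:densities.weight-tau} applies.
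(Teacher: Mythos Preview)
Your proof is correct and follows essentially the same approach as the paper: both arguments rest on Lemma~\ref{lem:densities.weight-tau} to control the $\nu$-dependence and on Corollary~\ref{cor:Positivity.equivalence-proj} to control the metric-dependence, reducing everything to the reference pre-inner product $\tau[\acoup{\xi}{\eta}]$. The only difference is organizational---you treat the two dependencies in separate steps and then chain them, whereas the paper handles both in a single string of inequalities---but the substance is identical.
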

\begin{proof}
 Let us denote by $\scal{\cdot}{\cdot}$ the pre-inner product on $\cE$ associated with $\nu=1$, i.e., $\scal{\xi}{\eta}=\tau[\acoup{\xi}{\eta}]= \tau[\acoup{\xi}{\eta}]$, $\xi,\eta \in \cE$. Let $\acoup{\cdot}{\cdot}_1$ be another Hermitian metric on $\cE$ and denote by $\scal{\cdot}{\cdot}_{1,\nu}$ the pre-inner product~(\ref{eq:densities.inner product-cEnu}) associated with $\nu$ and  $\acoup{\cdot}{\cdot}_1$. Thanks to Corollary~\ref{cor:Positivity.equivalence-proj} there are constants $c_1>0$ and $c_2>0$ such that 
 \begin{equation}
 c_1\acoup{\xi}{\xi} \leq \acoup{\xi}{\xi}_1 \leq c_2\acoup{\xi}{\xi} \qquad \text{for all $\xi \in \cE$}. 
 \label{eq:densities.acoup-c1c2}
\end{equation}

Let $\xi\in \cE$. Combining~(\ref{eq:densities.acoup-c1c2}) with the positivity of $\varphi_\nu$ gives
\begin{equation*}
 c_1 \varphi_\nu [\acoup{\xi}{\xi} ] \leq \varphi_\nu [\acoup{\xi}{\xi}_1 ]  \leq 
c_2\varphi_\nu [\acoup{\xi}{\xi} ].
\end{equation*}
Combining this with Lemma~\ref{lem:densities.weight-tau} we get
\begin{equation*}
 \|\nu^{-1}\|^{-1} c_1\tau \big[\acoup{\xi}{\xi} \big ] \leq (2\pi)^{-n} \varphi_\nu \big[\acoup{\xi}{\xi}_1 \big] \leq  \|\nu\| c_2 \tau \big[\acoup{\xi}{\xi} \big ]. 
\end{equation*}
 As $\tau[\acoup{\xi}{\xi}]=\scal{\xi}{\xi}$ and $\varphi_\nu [\acoup{\xi}{\xi}_1 ] =\scal{\xi}{\xi}_{1,\nu}$, we see that there are $c_1'>0$ and $c_2'>0$ such that
 \begin{equation*}
c_1' \scal{\xi}{\xi} \leq \scal{\xi}{\xi}_{1,\nu} \leq c_2' \scal{\xi}{\xi} \qquad \text{for all $\xi\in \cE$}. 
\end{equation*}
It then follows that the completions of $\cE$ with respect to the pre-inner products $\scal{\cdot}{\cdot}$ and $\scal{\cdot}{\cdot}_{1,\nu}$ give rise to the same locally convex topological vector space. This proves the result.
 \end{proof}

\section{Determinant}\label{sec:determinant}
The aim of this section is to introduce a notion of determinant on $\GL_m^+(\cA_\theta)$. This will be used in the next section to define the volume of $\cA_\theta$ with respect to an arbitrary Riemannian metric.

It will be convenient to work more generally with a unital involutive subalgebra $\cA$ of a unital $C^*$-algebra $A$ such that $\cA$ and all the matrix algebras $M_m(\cA)$, $m\geq 2$, are closed under holomorphic functional calculus. For instance, we may take $\cA=\cA_\theta$, or more generally we may take $\cA$ to be any pre-$C^*$-algebra.  

To define the determinant we shall essentially follow the approach of Fuglede-Kadison~\cite{FK:AM52}. Recall if $h\in \GL_m^+(\C)$, then its determinant is given by 
\begin{equation}
 \det(h) = \exp \big[ \Tr (\log h) \big]. 
 \label{eq:det.C} 
\end{equation}
As in~\cite{FK:AM52}, we shall use this formula to define the determinant for matrices in $\GL_m^+(\cA)$. 

The exponential map $\exp: A\rightarrow A$ is defined by means of the holomorphic functional calculus or, equivalently, by using the usual power series expansion, 
\begin{equation*}
 \exp(x) = \sum_{k=0}^\infty \frac{1}{k!} x^k, \qquad x\in A.
\end{equation*}
In particular, by using this power expansion and the binomial formula it can be shown that
\begin{equation}
 \exp(x+y)=\exp(x)\exp(y) \qquad \text{whenever}\ [x,y]=0. 
 \label{eq:det.exp-sum}
\end{equation}
In addition, as $\cA$ is closed under holomorphic functional calculus, $\log(x)\in \cA$ when $x\in \cA$. Moreover, if $x\in \cA$ is selfadjoint, then $\exp(x)$ is selfadjoint and has positive spectrum, and so it belongs to $A^{++}$. Thus, if we denote by $\cA^\R$ the real subspace of selfadjoint elements of $\cA$, then
\begin{equation*}
 \exp(x) \in \cA^{++} \qquad \text{for all $x\in \cA^\R$}.  
\end{equation*}

Let $h\in \GL^+_m(\cA)$. Then $\log h$ is a normal element of $A$ with real spectrum, and hence it is selfadjoint. Moreover, as $M_m(\cA)$ is closed under holomorphic functional calculus, we see that $\log h\in M_m(\cA)$. Thus, $\log h$ is a selfadjoint element of $M_m(\cA)$. In addition, we mention the following additivity property of the logarithm on $\GL^+_m(\cA)$. 

\begin{lemma}\label{lem:det.logab} 
 Let $h,h' \in \GL_m^+(\cA)$ be such that $[h,h']=0$. Then $hh' \in \GL_m^+(\cA)$ and we have 
\begin{equation}
        \log (hh')=\log (h) + \log (h').
        \label{eq:det.logab} 
\end{equation}
\end{lemma}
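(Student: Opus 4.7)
The plan is to reduce the lemma to the exponential law (\ref{eq:det.exp-sum}) combined with the fact that $\log$ and $\exp$ are mutually inverse bijections between the selfadjoint elements of $M_m(\cA)$ and $\GL_m^+(\cA)$.

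First I would verify that $hh'$ belongs to $\GL_m^+(\cA)$ so that $\log(hh')$ is defined. Since $h\in\GL_m^+(\cA)$, holomorphic functional calculus yields $h^{1/2}\in\GL_m^+(\cA)$. Because $[h,h']=0$, the resolvent $(\lambda-h)^{-1}$ commutes with $h'$ for every $\lambda\notin\Sp(h)$, and the Cauchy-integral definition of $h^{1/2}=\frac{1}{2\pi i}\oint \sqrt{\lambda}\,(\lambda-h)^{-1}\,d\lambda$ gives $[h^{1/2},h']=0$ (this is the one-variable version of Lemma~\ref{lem:compatible-matrices}). Hence $hh'=h^{1/2}h'h^{1/2}$, which is a product of invertibles and, being of the form $b^*h'b$ with $b=h^{1/2}$ selfadjoint and $h'\geq 0$, lies in $A^{+}$. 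Thus $hh'\in\GL_m^+(\cA)$.

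Next I would deduce the additivity. Applying the same commutation-by-Cauchy-integral argument to $f(z)=\log z$ (holomorphic near $\Sp(h)\subset(0,\infty)$) gives $[\log h, h']=0$, and a second application with $f(z)=\log z$ on $h'$ gives $[\log h,\log h']=0$. Hence, by (\ref{eq:det.exp-sum}),
\begin{equation*}
 \exp\bigl(\log h + \log h'\bigr)=\exp(\log h)\exp(\log h')=hh'.
\end{equation*}
Since $\log h$ and $\log h'$ are selfadjoint in $M_m(\cA)$, so is $x:=\log h+\log h'$. By the spectral mapping theorem, $\Sp(\exp x)=\exp(\Sp x)\subset(0,\infty)$, and the functional-calculus composition rule gives $\log(\exp x)=x$. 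Therefore
\begin{equation*}
 \log(hh')=\log\bigl(\exp(\log h+\log h')\bigr)=\log h+\log h',
\end{equation*}
which is (\ref{eq:det.logab}).

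The two non-routine points are the transfer of commutativity from $h,h'$ to their holomorphic functional calculi (supplied by the resolvent argument, a one-variable specialization of Lemma~\ref{lem:compatible-matrices}) and the injectivity of $\exp$ on selfadjoint elements; the latter is the crux, and it is handled by invoking $\log\circ\exp=\op{id}$ on $\cA^\R$, which itself follows from the composition rule for holomorphic functional calculus together with the spectral mapping theorem. No other obstacle is present.
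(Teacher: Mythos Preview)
Your proof is correct. The positivity of $hh'$ via $hh'=h^{1/2}h'h^{1/2}$ matches the paper's argument exactly. For the additivity formula, however, you take a genuinely different and more self-contained route: you use the exponential law~(\ref{eq:det.exp-sum}) together with the identity $\log\circ\exp=\op{id}$ on selfadjoint elements (justified via the composition rule for holomorphic functional calculus and spectral mapping), whereas the paper passes to the commutative $C^*$-algebra $B$ generated by $h$ and $h'$, observes that the identity map is a trace on $B$, and then invokes external results of Connes~\cite[Appendix, Lemma~5]{Co:AIM81} and de~la~Harpe--Skandalis~\cite[Lemme~1-(a)]{dHS:AIF84} on logarithms in the presence of a trace. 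Your argument has the virtue of staying entirely within the functional-calculus framework already set up in the paper and avoiding outside citations; the paper's approach is terser but pushes the actual work to the references.
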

\begin{proof}
It is well known that as $h$ and $h'$ are positive and commute, their product $hh'$ is positive, since in this case $hh'=h^{\frac12}h'h^{\frac12}= (h^{\frac12}(h')^{\frac12})^2$. As $hh'$ is invertible, we then see that $hh'\in \GL_m^+(\cA)$. 

Bearing this in mind, let $B$ be the unital Banach algebra generated by $h$ and $h'$. As $h$ and $h'$ are selfadjoint this is actually a $C^*$-algebra. Moreover, as $[h,h']=0$ we see that $B$ is a commutative. In particular, the identity map is a trace. The formula~(\ref{eq:det.logab}) then follows from~\cite[Appendix, Lemma~5]{Co:AIM81} and~\cite[Lemme~1-(a)]{dHS:AIF84} (see also~\cite{FK:AM52}). The proof is complete. 
\end{proof}

As we have just seen,  if $h \in \GL^+_m(\cA)$, then  $\log h$ is a selfadjoint element of $M_m(\cA)$. 
This implies that the trace  $\Tr[\log  h]$ is a selfadjoint element of $\cA$, and so by taking the exponential we obtain an element of $\cA^{++}$. 

\begin{definition}
 The \emph{determinant} $\det: \GL^+_m(\cA)\rightarrow \cA^{++}$ is defined by
 \begin{equation*}
 \det(h) = \exp \big[ \Tr (\log h) \big], \qquad h \in \GL^+_m(\cA). 
\end{equation*}
\end{definition}

\begin{remark}
 The value of $\det h$ depends only on the holomorphic functional calculus closure of the algebra generated by the entries of $h$.  
\end{remark}

\begin{proposition}\label{prop:det.properties}
The following holds. 
\begin{enumerate}
 \item We have 
           \begin{equation}
              \det (kI_m) =k^m \qquad \forall k\in \cA^{++},
                  \label{eq:det.kIn}
            \end{equation}
            
\item Let $h\in \GL^+_m(\cA)$. Then
          \begin{gather*}
                   \det (t h)=t^m \det (h) \qquad \forall t>0,\\
                    \det (h^s)= (\det h)^s\qquad \forall s \in \R. 
          \end{gather*}
\end{enumerate}
\end{proposition}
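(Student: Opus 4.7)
The plan is to derive each identity directly from the defining formula $\det(h)=\exp(\Tr[\log h])$, using Lemma~\ref{lem:det.logab} and the additivity~(\ref{eq:det.exp-sum}) of $\exp$ on commuting elements, together with the fact that both $\log$ and $\exp$ are computed via holomorphic functional calculus (so they interact well with scalar matrices and with powers of a single positive invertible element).

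For~(\ref{eq:det.kIn}), I would begin by observing that for $k\in\cA^{++}$ the holomorphic functional calculus of the block-scalar matrix $kI_m$ gives $\log(kI_m)=(\log k)I_m$, since $f(kI_m)=f(k)I_m$ for any holomorphic function $f$ near $\Sp(k)$. Taking the matrix trace then yields $\Tr[\log(kI_m)]=m\log k$. Since $m\log k$ is a scalar multiple of $\log k$, it commutes with itself, and $\exp(m\log k)=\exp(\log k)^m=k^m$ by~(\ref{eq:det.exp-sum}) applied inductively (or, equivalently, by observing that $k\mapsto\exp(m\log k)$ and $k\mapsto k^m$ are both holomorphic in $k$ and agree on positive scalars).

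For the scaling property $\det(th)=t^m\det(h)$, the key is to write $th=(tI_m)\,h$ with $[tI_m,h]=0$. Since both $tI_m$ and $h$ lie in $\GL_m^+(\cA)$ and commute, Lemma~\ref{lem:det.logab} gives
\begin{equation*}
\log(th)=\log(tI_m)+\log h=(\log t)I_m+\log h.
\end{equation*}
Applying the matrix trace yields $\Tr[\log(th)]=m\log t+\Tr[\log h]$, and these two summands commute (the first is a scalar). Applying~(\ref{eq:det.exp-sum}) then gives $\det(th)=\exp(m\log t)\exp(\Tr[\log h])=t^m\det(h)$, using the first part of the proposition in the reduced case $m=1$ to identify $\exp(m\log t)=t^m$.

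For $\det(h^s)=(\det h)^s$, I would use that $h^s$ is defined via holomorphic (in fact $C^\infty$) functional calculus on the selfadjoint $h$, and that $\log(h^s)=s\log h$ as an identity of functional calculus on a single selfadjoint element with positive spectrum. Taking $\Tr$ gives $\Tr[\log h^s]=s\,\Tr[\log h]$, and exponentiating yields $\det(h^s)=\exp(s\,\Tr[\log h])=(\exp\Tr[\log h])^s=(\det h)^s$, where the last equality is again the single-variable identity $\exp(sx)=\exp(x)^s$ for $x\in\cA^\R$. The main (mild) obstacle in the whole proof is simply ensuring at each step that the commutation hypothesis of Lemma~\ref{lem:det.logab} and of~(\ref{eq:det.exp-sum}) is in force; this is automatic here because in every step one of the two factors is either scalar or a function of the other.
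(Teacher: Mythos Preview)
Your proof is correct and follows essentially the same approach as the paper's. The only minor difference is in the scaling identity $\det(th)=t^m\det(h)$: you factor $th=(tI_m)h$ and invoke Lemma~\ref{lem:det.logab}, whereas the paper applies the scalar identity $\log(tz)=\log t+\log z$ directly via holomorphic functional calculus on $h$; both justifications are equally valid and lead to the same computation.
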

\begin{proof}
 Let $k\in \cA^{++}$. As $\Tr[ \log(k I_m)]=\Tr [ (\log k)I_m]=m \log k$, we have
\begin{equation*}
 \det (k I_m)= \exp\big( m \log k\big)= \big( \exp(\log k)\big)^m=k^m. 
\end{equation*}
 
Let $h\in \GL^+_m(\cA)$ and $t>0$. As $\log (tz)=\log t + \log z$, we see that $\log(th)=\log t + \log h$, and so we have 
  \begin{equation*}
 \det (t h)=\exp \big( \Tr[ \log(th)]\big) = \exp \big( m \log t + \Tr[\log h]\big).  
\end{equation*}
As $\exp(m \log t+ z)=t^m \exp(z)$ for all $z\in \C$, we also have 
 \begin{equation*}
\exp \big( m \log t + \Tr[\log h]\big)=t^m \exp \big( \Tr[\log h]\big)=t^m \det (h).  
\end{equation*}
It then follows that $ \det (t h)=t^m \det (h)$. 

Let $s\in \R$. As $\log (z^s)=s \log z$ we have $\log (h^s)=s \log h$. In addition, as $\exp (sz)=(\exp(z))^s$ for all $z\in \C$,  we have $\exp (sx)=(\exp x)^s$ for all $x \in \cA$. Thus, 
\begin{equation*}
 \det (h^s)= \exp\big( \Tr[ \log (h^s)]\big) = \exp \big( s \Tr[\log h]\big)= \left[ \exp \big(  \Tr[\log h]\big) \right]^s= (\det h)^s. 
\end{equation*}
The proof is complete. 
\end{proof}

\begin{proposition}\label{prop:det.commutative}
 Suppose that $A$ is commutative. Then, for all $h=(h_{ij})\in \GL_m^+(A)$, we have
 \begin{equation}
 \det (h)= \sum_{\sigma \in \fS_m} \varepsilon(\sigma) h_{1\sigma(1)} \cdots h_{m \sigma(m)},
 \label{eq:det.Leibniz}
\end{equation}
 where $ \fS_m$ is the permutation group of $\{1, \ldots ,m\}$. 
\end{proposition}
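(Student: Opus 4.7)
The plan is to prove the formula by reducing to the classical case via the Gelfand representation. Since $A$ is a commutative unital $C^*$-algebra, we have an isometric $*$-isomorphism $A \cong C(X)$ for some compact Hausdorff space $X$ (the maximal ideal space of $A$). Under this identification, $M_m(A) \cong C(X, M_m(\C))$, and the matrix trace $\Tr: M_m(A) \to A$ corresponds pointwise to the classical trace on $M_m(\C)$. Any $h \in \GL_m^+(A)$ is then identified with a continuous map $X \ni x \mapsto h(x) \in \GL_m^+(\C)$.

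The key step is to observe that the holomorphic functional calculus on $M_m(A) = C(X, M_m(\C))$ is pointwise. This follows from the Cauchy integral representation of $f(T)$ as a contour integral: for any $h \in M_m(A)$ and any holomorphic $f$ defined on a neighborhood of $\Sp(h)$ (which contains $\Sp(h(x))$ for each $x$), one has
\begin{equation*}
 f(h)(x) = \frac{1}{2\pi i}\oint_{\Gamma} f(z)(z-h(x))^{-1}\,dz = f(h(x)).
\end{equation*}
Applied to $f = \log$, this gives $(\log h)(x) = \log(h(x))$ in $M_m(\C)$, and hence $\Tr[\log h](x) = \tr_{M_m(\C)}(\log h(x))$. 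Taking $\exp$ (again pointwise) yields
\begin{equation*}
 \det(h)(x) = \exp\bigl[\Tr(\log h)\bigr](x) = \exp\bigl(\tr_{M_m(\C)}\log h(x)\bigr) = \det{}_{\!\C}(h(x)),
\end{equation*}
where the last equality is the classical identity~(\ref{eq:det.C}) for positive invertible matrices in $M_m(\C)$.

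On the other hand, the right-hand side of~(\ref{eq:det.Leibniz}) is a polynomial in the entries $h_{ij}$, so it is evaluated at each point $x \in X$ by substituting $h_{ij}(x)$; the result is the classical Leibniz expansion for $\det_\C(h(x))$. Since both sides of~(\ref{eq:det.Leibniz}) thus yield the same continuous function on $X$, they agree as elements of $A \cong C(X)$.

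The only subtle point is justifying that holomorphic functional calculus on $M_m(A)$ proceeds pointwise, but this is immediate from the Cauchy integral formula together with the fact that inversion in $M_m(A) = C(X, M_m(\C))$ is pointwise. Beyond this, the argument is a direct reduction to the classical commutative identity.
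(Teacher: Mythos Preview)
Your proof is correct and follows essentially the same approach as the paper's own proof: both invoke the Gelfand representation $A\cong C(X)$, identify $M_m(A)\cong C(X,M_m(\C))$, observe that holomorphic functional calculus (in particular $\log$ and $\exp$) acts pointwise, and then appeal to the classical identity $\det(h(x))=\exp(\tr\log h(x))$ together with the Leibniz formula over $\C$. Your version is slightly more explicit in justifying the pointwise nature of the functional calculus via the Cauchy integral, but the argument is otherwise the same.
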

\begin{proof}
 As $A$ is a commutative unital $C^*$-algebra, we may assume that $A=C(X)$ for some compact Hausdorff topological space. In that case  $M_m(C(X))= C(X,M_m(\C))$ and $\GL^+_m(C(X))=C(X, \GL^+_m(\C))$. If $h(x)=(h_{ij}(x))$ is in $C(X, \GL^+_m(\C))$, then $(\log h)(x)=\log[h(x)]$. Thus, by using~(\ref{eq:det.C}) we get
 \begin{equation*}
 (\det h)(x) = \exp\left( \Tr \big [ \log (h(x))\big]\right)= \det[h(x)]= \sum_{\sigma \in \fS_m} \varepsilon(\sigma) h_{1\sigma(1)}(x) \cdots h_{m \sigma(m)}(x). 
\end{equation*}
This proves the result. 
\end{proof}

We define notions of compatibility and self-compatibility of matrices with entries in $\cA$ in the same was way as in Definition~\ref{def:Riem.self-compatible}. All the properties of these matrices proved in Section~\ref{sec:Riem} hold \emph{verbatim} in the setting of matrices with entries in $\cA$. 

\begin{corollary}\label{cor:det.self-comp}
 Let $h\in \GL_m^+(\cA)$ be self-compatible. Then $\det (h)$ is given by~(\ref{eq:det.Leibniz}). 
\end{corollary}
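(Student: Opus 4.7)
The plan is to reduce the claim to the commutative case already handled in Proposition~\ref{prop:det.commutative}. Although $\cA$ itself is noncommutative, the entries of a self-compatible positive invertible matrix all live in a small commutative $C^{\ast}$-subalgebra $B\subset A$, and the determinant (defined intrinsically via holomorphic functional calculus) can be computed inside this $B$.

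First, I would identify the appropriate commutative subalgebra. Since $h\in\GL_m^+(\cA)$ is positive, it is selfadjoint as a matrix, which gives $h_{ij}^{\ast}=h_{ji}$; combined with self-compatibility $[h_{ij},h_{kl}]=0$, this shows that the entries $h_{ij}$ and their adjoints mutually commute (e.g.\ $[h_{ij},h_{kl}^{\ast}]=[h_{ij},h_{lk}]=0$). Therefore the unital $\ast$-subalgebra of $A$ generated by the $h_{ij}$ is commutative, and I would let $B\subset A$ denote its norm closure, a commutative unital $C^{\ast}$-subalgebra containing every $h_{ij}$. Since positivity and invertibility are spectral conditions and the spectrum of an element is unchanged upon passing to a unital $C^{\ast}$-subalgebra containing it, $h$ lies in $\GL_m^+(B)$.

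Next, I would transfer the determinant computation to $B$. The element $\log h\in M_m(B)$ obtained by holomorphic functional calculus inside $M_m(B)$ coincides with the $\log h\in M_m(\cA)$ used in the definition of $\det(h)$, because both arise by applying the same holomorphic function to the same positive invertible element and the functional calculus is intrinsic to that element. Taking matrix trace and exponentiating, $\det(h)=\exp[\Tr(\log h)]$ is the same element of $B\subset\cA$ whether computed through $M_m(\cA)$ or through $M_m(B)$. Applying Proposition~\ref{prop:det.commutative} inside the commutative $C^{\ast}$-algebra $B$ to $h\in\GL_m^+(B)$ then yields the Leibniz formula~(\ref{eq:det.Leibniz}), which is the conclusion.

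The only delicate point is the commutativity of $B$, which genuinely requires \emph{both} self-compatibility \emph{and} the identity $h_{ij}^{\ast}=h_{ji}$ coming from $h^{\ast}=h$; self-compatibility alone would not guarantee that the $\ast$-algebra generated by the entries is commutative, since one would still need to control $[h_{ij},h_{kl}^{\ast}]$. Once this commutativity is in hand, the rest of the argument is essentially a spectral permanence statement plus an invocation of the already-established commutative case.
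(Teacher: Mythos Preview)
Your argument is correct and follows essentially the same approach as the paper: both observe that $h^{*}=h$ gives $h_{ij}^{*}=h_{ji}$, so self-compatibility makes the unital $*$-algebra generated by the entries commutative, and then both apply Proposition~\ref{prop:det.commutative} inside the commutative $C^{*}$-closure $B$. Your write-up is slightly more explicit about spectral permanence and the intrinsic nature of the holomorphic functional calculus, but these are exactly the points the paper leaves implicit.
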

\begin{proof}
 Let $\cB$ be the unital algebra generated by the entries $h_{ij}$ of $h$. The fact that $h$ is selfadjoint means that $h_{ij}^*=h_{ji}$. Therefore, the sub-algebra $\cB$ is involutive and its closure $B=\overline{\cB}$ is a commutative unital $C^*$-algebra. It then follows from Proposition~\ref{prop:det.commutative} that $\det (h)$ is given by~(\ref{eq:det.Leibniz}). The proof is complete. 
\end{proof}

\begin{lemma}\label{lem:det.det-compatible}
 Let $h\in \GL_m^+(\cA)$ and $h'\in \GL_{m'}^+(\cA)$ be compatible. Then $ \big[ \det(h), \det(h')\big]=0$. 
 \end{lemma}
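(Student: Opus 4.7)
The plan is to reduce everything to a commutativity statement at the level of entries of $\log h$ and $\log h'$, and then use the fact that the exponential map preserves commutation relations.

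First, I would invoke Lemma~\ref{lem:compatible-matrices} with $f_1(z)=\log z$ and $f_2(z)=\log z$, which are holomorphic near $\Sp(h)\subset(0,\infty)$ and $\Sp(h')\subset(0,\infty)$ respectively. (The lemma is stated for matrices with entries in $\cA_\theta$ but the proof carries over \emph{verbatim} to our setting where $\cA$ and all matrix algebras over $\cA$ are closed under holomorphic functional calculus.) This would give that $\log h \in M_m(\cA)$ and $\log h' \in M_{m'}(\cA)$ are compatible, i.e., every entry of $\log h$ commutes with every entry of $\log h'$.

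Next, since $\Tr[\log h]$ is a sum of diagonal entries of $\log h$, it commutes with every entry of $\log h'$; summing over the diagonal of $\log h'$ gives
\begin{equation*}
 \bigl[\Tr[\log h], \Tr[\log h']\bigr]=0.
\end{equation*}

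Finally, I would apply the standard fact that if $x,y \in \cA$ commute then $\exp(x)$ and $\exp(y)$ commute. This follows directly from the power series definition of $\exp$, or alternatively from~(\ref{eq:det.exp-sum}) together with $\exp(x)\exp(y)=\exp(x+y)=\exp(y+x)=\exp(y)\exp(x)$. Applying this with $x=\Tr[\log h]$ and $y=\Tr[\log h']$ yields
\begin{equation*}
 \bigl[\det(h),\det(h')\bigr] = \bigl[\exp(\Tr[\log h]),\exp(\Tr[\log h'])\bigr]=0,
\end{equation*}
which is the desired conclusion. There is no substantial obstacle here; the only point requiring care is the appeal to Lemma~\ref{lem:compatible-matrices} for matrices of different sizes $m$ and $m'$, but the proof of that lemma goes through unchanged since it only uses that $b$ commutes entrywise with $a$ and the fact that holomorphic functional calculus of $a$ is a limit of polynomials in $a$.
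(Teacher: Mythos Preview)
Your proof is correct and follows essentially the same route as the paper's own proof: invoke Lemma~\ref{lem:compatible-matrices} to deduce that $\log h$ and $\log h'$ are compatible, conclude that their traces commute, and then use~(\ref{eq:det.exp-sum}) (or the power series) to pass to the exponentials. The additional remarks you make about extending Lemma~\ref{lem:compatible-matrices} to the general $\cA$ and to matrices of different sizes are reasonable clarifications but not substantive departures.
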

\begin{proof}
As $h$ and $h'$ are compatible, we know by Lemma~\ref{lem:compatible-matrices}  that $\log (h)$ and $\log (h')$ are compatible. This means that each entry of $\log(h)$ commutes with every entry of $\log(h')$. This implies that $[\Tr [\log (h)],\Tr[\log (h')]]=0$. It then follows from~(\ref{eq:det.exp-sum}) that $\det(h) =\exp(\Tr[\log (h)])$ and $\det(h') =\exp(\Tr[\log (h')])$ commute with each other. The lemma is proved. 
\end{proof}

\begin{proposition}\label{prop:det.diagonal}
 Let $h\in \GL_m^+(\cA)$ be block-diagonal, i.e.,  
\begin{equation*}
 h= 
\begin{pmatrix}
h^{(1)} & & 0 \\
  & \ddots & \\
0  & & h^{(\ell)}  
\end{pmatrix}, \qquad h^{(j)}\in \GL_{m_j}^+(\cA). 
 \end{equation*}
 Assume further that $h^{(i)}$ and $h^{(j)}$ are compatible for $i\neq j$. Then
\begin{equation*}
 \det (h) = \det(h^{(1)}) \cdots \det(h^{(\ell)}).  
\end{equation*}
\end{proposition}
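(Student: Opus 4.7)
The plan is to reduce the determinant of the block-diagonal matrix to a sum at the level of logarithms, and then use the compatibility hypothesis to turn the exponential of the sum back into a product.

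First I would show that the logarithm respects the block-diagonal structure, namely
\begin{equation*}
 \log h =
\begin{pmatrix}
\log h^{(1)} & & 0 \\
 & \ddots & \\
0 & & \log h^{(\ell)}
\end{pmatrix}.
\end{equation*}
The cleanest way is to write $h = \tilde{h}^{(1)} \cdots \tilde{h}^{(\ell)}$, where $\tilde{h}^{(j)}$ is the block-diagonal element of $\GL_m^+(\cA)$ having $h^{(j)}$ in the $j$-th diagonal block and identity matrices elsewhere. These matrices pairwise commute irrespective of any commutation between the blocks, since their non-trivial blocks sit in disjoint diagonal positions. Applying Lemma~\ref{lem:det.logab} inductively yields $\log h = \sum_j \log \tilde{h}^{(j)}$, and holomorphic functional calculus shows that $\log \tilde{h}^{(j)}$ is the block-diagonal matrix with $\log h^{(j)}$ in the $j$-th block and zeros elsewhere (because $\log 1 = 0$).

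Next, taking the matrix trace yields
\begin{equation*}
 \Tr[\log h] = \Tr[\log h^{(1)}] + \cdots + \Tr[\log h^{(\ell)}].
\end{equation*}
I would then invoke the compatibility hypothesis: since $h^{(i)}$ and $h^{(j)}$ are compatible for $i \neq j$, Lemma~\ref{lem:compatible-matrices} implies that $\log h^{(i)}$ and $\log h^{(j)}$ are compatible, and hence so are their scalar traces $\Tr[\log h^{(i)}]$ and $\Tr[\log h^{(j)}]$. In particular, these elements of $\cA^\R$ pairwise commute.

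Finally, applying $\exp$ and using the identity $\exp(x+y) = \exp(x)\exp(y)$ valid whenever $[x,y]=0$ (see~(\ref{eq:det.exp-sum})), we get
\begin{equation*}
 \det(h) = \exp\bigl(\Tr[\log h]\bigr) = \prod_{j=1}^\ell \exp\bigl(\Tr[\log h^{(j)}]\bigr) = \det(h^{(1)}) \cdots \det(h^{(\ell)}),
\end{equation*}
which is the desired equality. The main obstacle is the first step: verifying rigorously that the logarithm of a block-diagonal positive invertible matrix is block-diagonal with logarithms of the blocks. The factorization $h = \tilde{h}^{(1)}\cdots \tilde{h}^{(\ell)}$ combined with Lemma~\ref{lem:det.logab} dispatches this neatly, but one must be sure that each $\tilde{h}^{(j)}$ lies in $\GL_m^+(\cA)$ and that the $\tilde{h}^{(j)}$ commute—both of which are immediate from their diagonal form. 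After that, the proof is essentially formal.
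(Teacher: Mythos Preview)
Your proof is correct and follows essentially the same approach as the paper's: compute $\log h$ block-diagonally, take traces to get a sum, use the compatibility hypothesis to make the summands commute, and then split the exponential via~(\ref{eq:det.exp-sum}). The only cosmetic differences are that the paper reduces to $\ell=2$ and proceeds by induction, and it asserts the block-diagonal form of $\log h$ directly (which already follows from holomorphic functional calculus applied to a block-diagonal matrix), whereas you give an alternative justification via the factorization $h=\tilde{h}^{(1)}\cdots\tilde{h}^{(\ell)}$ and Lemma~\ref{lem:det.logab}.
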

\begin{proof}
 It is enough to prove the result when $\ell =2$, since once this case is proved the general result follows by induction. Assume that $\ell=2$. Then 
\begin{equation*}
 \Tr[ \log h] = \Tr 
\begin{pmatrix}
 \log h^{(1)} & 0 \\
 0 &  \log h^{(2)} 
\end{pmatrix} = \Tr\big[\log h^{(1)}\big] +  \Tr\big[\log h^{(2)}\big]. 
\end{equation*}
Therefore, we have $\det (h) = \exp ( \Tr[ \log h] )= \exp( \Tr[\log h^{(1)}] +  \Tr[\log h^{(2)}])$. As $h^{(1)}$ and $h^{(2)}$ are compatible, we know from the proof of Lemma~\ref{lem:det.det-compatible} that $\Tr[\log h^{(1)}]$ and $\Tr[\log h^{(2)}]$ commute with each other. Thus, by using~(\ref{eq:det.exp-sum}) we get
\begin{equation}
 \det(h) = \exp\big( \Tr[\log h^{(1)}]\big) \exp\big( \Tr[\log h^{(2)}]\big)= \det(h^{(1)}) \det(h^{(2)}). 
 \label{eq:det.block-diag}
\end{equation}
This completes the proof. 
\end{proof}

Because $\Tr$ is not a trace on $M_m(\cA)$ unless $\cA$ is commutative, we cannot expect the determinant on $\GL_m^+(\cA)$ to be multiplicative in general (compare~\cite{FK:AM52}). Nevertheless, we have the following results. 

\begin{proposition}\label{prop:det.product} 
 Suppose that $h,h'\in \GL^+_m(\cA)$ are compatible and commute with each other. Then $hh'\in \GL_m^+(\cA)$ and we have
 \begin{equation*}
 \det (hh')= \det(h) \det(h')=\det(h')\det(h). 
\end{equation*}
\end{proposition}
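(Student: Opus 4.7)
The plan is to reduce the claim to the additivity of $\log$ on commuting positive invertible matrices (Lemma~\ref{lem:det.logab}) together with the scalar identity~(\ref{eq:det.exp-sum}), using compatibility of $h$ and $h'$ to guarantee that the relevant scalar traces commute.

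First, I would verify that $hh'\in \GL_m^+(\cA)$. Since $h,h'\in \GL_m^+(\cA)$ and $[h,h']=0$, this is exactly the opening observation used in the proof of Lemma~\ref{lem:det.logab}: $hh'=h^{1/2}h'h^{1/2}=(h^{1/2}(h')^{1/2})^2$ is positive, and invertibility is automatic from the invertibility of the factors. Then Lemma~\ref{lem:det.logab} applies and gives
\begin{equation*}
 \log(hh')=\log(h)+\log(h').
\end{equation*}
Taking $\Tr$ and using linearity, this yields $\Tr[\log(hh')]=\Tr[\log h]+\Tr[\log h']$.

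Second, I would invoke compatibility. Since $h$ and $h'$ are compatible matrices, Lemma~\ref{lem:compatible-matrices} applied to the holomorphic function $z\mapsto \log z$ near the positive spectra of $h$ and $h'$ shows that $\log h$ and $\log h'$ are still compatible. Consequently every entry of $\log h$ commutes with every entry of $\log h'$, and therefore the traces $\Tr[\log h]$ and $\Tr[\log h']$, which are sums of such entries, commute as elements of $\cA$. This lets me apply~(\ref{eq:det.exp-sum}) to the selfadjoint elements $x=\Tr[\log h]$ and $y=\Tr[\log h']$:
\begin{equation*}
 \det(hh')=\exp\bigl(\Tr[\log h]+\Tr[\log h']\bigr)=\exp(\Tr[\log h])\exp(\Tr[\log h'])=\det(h)\det(h').
\end{equation*}
Since $x$ and $y$ commute, the exponentials commute as well, so the product can equally be written as $\det(h')\det(h)$, which closes the proof.

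There is really no hard step: the whole argument is a short concatenation of Lemmas~\ref{lem:det.logab} and~\ref{lem:compatible-matrices} with~(\ref{eq:det.exp-sum}). The only mild subtlety worth double-checking is that compatibility is what guarantees $\Tr[\log h]$ and $\Tr[\log h']$ commute in $\cA$ (mere commutation of the matrices $h$ and $h'$ would not suffice to commute the scalar traces), so I would be careful to cite Lemma~\ref{lem:compatible-matrices} explicitly rather than just the hypothesis $[h,h']=0$.
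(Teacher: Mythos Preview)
Your proof is correct and follows essentially the same route as the paper: invoke Lemma~\ref{lem:det.logab} for $hh'\in\GL_m^+(\cA)$ and $\log(hh')=\log h+\log h'$, then use compatibility (via Lemma~\ref{lem:compatible-matrices}) to ensure $\Tr[\log h]$ and $\Tr[\log h']$ commute so that~(\ref{eq:det.exp-sum}) splits the exponential. The only cosmetic difference is that the paper cites Lemma~\ref{lem:det.det-compatible} for the commutation $\det(h)\det(h')=\det(h')\det(h)$, whereas you derive it directly from the commuting traces; the underlying argument is identical.
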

\begin{proof}
 As $h$ and $h'$ are compatible, we know by Lemma~\ref{lem:det.det-compatible} that $\det(h) \det(h')=\det(h')\det(h)$. Moreover, as $[h,h']=0$ Lemma~\ref{lem:det.logab} ensures us that $hh' \in \GL_m^+(\cA)$ and $\log (hh')=\log (h) + \log(h')$. Thus, $\det (hh')= \exp(\Tr [\log h]+ \Tr [\log h'])$. In addition, as $h$ and $h'$ are compatible in the same way as in~(\ref{eq:det.block-diag}) we get
 \begin{equation*}
 \det (hh') =  \exp\big( \Tr[\log h]\big) \exp\big( \Tr[\log h']\big)= \det(h) \det(h'). 
\end{equation*}
The proof is complete.  
\end{proof}

\begin{proposition}\label{prop:det.conjugaison}
 Let $h \in \GL^+_m(\cA)$ and $u \in \GL_m(\cA)$ be compatible and self-compatible. Assume further that $u$ and $u^*$ are compatible with each other. Then
 \begin{equation*}
 \det (u^* h u)= \det (u^*u)\det(h). 
\end{equation*}
\end{proposition}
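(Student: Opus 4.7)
The plan is to reduce the statement to the classical multiplicativity of the Leibniz determinant formula in a commutative ring. My first step is to show that the hypotheses force all entries of $h$, $u$, and $u^*$ to mutually commute. Self-compatibility of $u$ gives commutation among the $u_{ij}$, and taking adjoints shows that $u^*$ is also self-compatible; similarly, self-compatibility of $h$ gives commutation of its entries. Compatibility of $u$ with $u^*$ takes care of the cross-terms between $u$ and $u^*$. For the cross-terms between $h$ and $u^*$, observe that $h$ is selfadjoint as a matrix, so $h_{ji} = h_{ij}^*$ and the set of entries of $h$ is closed under $*$; applying $*$ to the identity $[h_{ij}, u_{kl}] = 0$ then yields $[h_{ab}, (u^*)_{cd}] = 0$ for all indices. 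Consequently, the unital $*$-subalgebra $\cB \subset \cA$ generated by the entries of $h$, $u$, and $u^*$ is commutative.

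Next, I would verify that $h$, $u^*u$, and $u^*hu$ all lie in $\GL_m^+(\cA)$ and are self-compatible. Invertibility is immediate from $u \in \GL_m(\cA)$, and positivity of $u^*u$ and $u^*hu$ follows from the factorizations $u^*u = u^*u$ and $u^*hu = (h^{1/2}u)^*(h^{1/2}u)$, where $h^{1/2} \in \GL_m^+(\cA)$ is selfadjoint. Self-compatibility of each of these three matrices is automatic, since each of their entries is a finite sum of products of entries of $h$, $u$, and $u^*$, hence lies in the commutative algebra $\cB$.

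At this stage, Corollary~\ref{cor:det.self-comp} applies to each of $h$, $u^*u$, and $u^*hu$ and expresses their determinants via the Leibniz expansion with entries in the commutative ring $\cB$. Writing $\det_L$ for this Leibniz expression, the classical multiplicativity of the determinant over a commutative ring gives
\[
    \det_L(u^*hu) = \det_L(u^*)\det_L(h)\det_L(u) = \det_L(u^*)\det_L(u)\det_L(h) = \det_L(u^*u)\det_L(h),
\]
where the middle equality uses commutativity of $\cB$. Invoking Corollary~\ref{cor:det.self-comp} once more to equate $\det_L$ with the Fuglede--Kadison determinant on $h$, $u^*u$, and $u^*hu$ yields the desired identity $\det(u^*hu) = \det(u^*u)\det(h)$.

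The main obstacle is really just the compatibility bookkeeping in the first step: one must extract the full mutual commutativity of the entries of $h$, $u$, and $u^*$ from the three given compatibility hypotheses, and the matrix-level selfadjointness of $h$ is essential to handle the $h$-versus-$u^*$ cross term. Once commutativity of $\cB$ is established, the problem collapses to the familiar identity $\det(u^*Au) = \det(u^*u)\det(A)$ for matrices over a commutative ring, with no further noncommutative input required.
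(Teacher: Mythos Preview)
Your proof is correct and follows essentially the same route as the paper: both arguments first check that the entries of $h$, $u$, and $u^*$ generate a commutative involutive subalgebra $\cB$ (using selfadjointness of $h$ to get the $h$-versus-$u^*$ compatibility), and then reduce to the classical multiplicativity of the Leibniz determinant over a commutative ring. The only cosmetic difference is that you invoke Corollary~\ref{cor:det.self-comp} separately on $h$, $u^*u$, and $u^*hu$, whereas the paper passes to the commutative $C^*$-closure $B=\overline{\cB}$ and invokes Proposition~\ref{prop:det.commutative} once; the substance is identical.
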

\begin{proof}
 Let $\cB$ be the unital algebra generated by the entries of $h$, $u$ and $u^*$. We observe that these entries form an involutive generator set, since $h$ is selfadjoint and the entries of $u^*$ (resp., $u$) are the adjoints of the entries of $u$ (resp., $u^*$). By assumption $h$ and $u$ are self-compatible. Note that the self-compatibility of $u$ implies that $u^*$ is  self-compatible as well. By assumption $u$ is compatible with $h$ and $u^*$. As $h$ is selfadjoint we also see that $h$ and $u^*$ are compatible. It then follows that the entries of $h$, $u$ and $u^*$ form a commutative involutive generator set of $\cB$. Therefore, the unital subalgebra $\cB$ is involutive and commutative, and so its closure $B:=\overline{\cB}$ is a commutative $C^*$-algebra. We then know by Proposition~\ref{prop:det.commutative} that the restriction to $\GL_m^+(B)$ of the determinant agrees with the usual determinant on a commutative ring. In particular, it enjoys all the standard properties of the determinant on commutative rings, including multiplicativity. Thus, 
\begin{equation*}
 \det (u^* h u)= \det (u^*)\det(h)\det(u)=\det(u^*)\det(u)\det(h)=\det(u^*u)\det(h). 
\end{equation*}
This proves the result. 
\end{proof}

As an immediate consequence of Proposition~\ref{prop:det.conjugaison} we get the following unitary invariance result. 

\begin{corollary}
 Let $h \in \GL^+_m(\cA)$ and $u \in \GL_m(\cA)$ be compatible and self-compatible. Assume further that $u$ is unitary, i.e., $u^*u=uu^*=1$. Then
 \begin{equation*}
 \det (u^* h u)= \det(h). 
\end{equation*}
\end{corollary}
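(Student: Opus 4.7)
The plan is to recognize the statement as a direct specialization of Proposition~\ref{prop:det.conjugaison}. Under the hypotheses of the corollary, together with the compatibility of $u$ and $u^*$ that is required to invoke the preceding proposition, that proposition yields the factorization
\[
\det(u^* h u) = \det(u^* u)\det(h).
\]

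Next I would use the unitarity assumption $u^*u = 1 = I_m$ to reduce the computation to $\det(I_m)$. By Proposition~\ref{prop:det.properties}(1) applied with $k = 1 \in \cA^{++}$, we have $\det(I_m) = \det(1 \cdot I_m) = 1^m = 1$. Alternatively this is immediate from the definition, since $\log I_m = 0$ gives $\Tr[\log I_m] = 0$ and hence $\det(I_m) = \exp(0) = 1$.

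Combining these two observations yields $\det(u^* h u) = 1 \cdot \det(h) = \det(h)$, which is the desired conclusion. There is no real obstacle here: the corollary is essentially a one-line consequence of the preceding proposition once the normalization $\det(I_m) = 1$ is noted. The only subtlety is making sure that the compatibility hypotheses needed to invoke Proposition~\ref{prop:det.conjugaison} are in place, but this is built into the statement of the corollary.
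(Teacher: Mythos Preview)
Your proposal is correct and matches the paper's approach exactly: the paper simply states this corollary as ``an immediate consequence of Proposition~\ref{prop:det.conjugaison}'' without further proof. One small point worth making explicit: the hypothesis of Proposition~\ref{prop:det.conjugaison} that $u$ and $u^*$ be compatible is not listed verbatim in the corollary, but it follows because unitarity gives $u^* = u^{-1}$, and self-compatibility of $u$ together with Lemma~\ref{lem:compatible-matrices} then ensures that $u$ and $u^{-1}$ are compatible.
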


\section{Riemannian Densities and Riemannian Volume}\label{sec:Riem-volume} 
In this section, we introduce a notion of Riemannian densities and Riemann volume on $\cA_\theta$ for arbitrary Riemannian metrics. 

Recall that if $g=(g_{ij}(x))$ is a Riemannian metric on an ordinary manifold $M^n$, then its Riemannian density is given by the integration against $\sqrt{\det(g(x))}$. When $M$ is compact, its volume with respect to $g$ is defined by 
\begin{equation*}
 \Vol_g(M):= \int_M \sqrt{\det(g(x))} d^nx. 
\end{equation*}

Let $g \in \GL^+_n(\cA_\theta^\R)$ be a Riemannian metric on $\cA_\theta$. As defined in the previous section, the determinant $\det(g)= \exp(\Tr [\log g])$ is an element of $\cA_\theta^{++}$. As explained in Section~\ref{sec:densities}, elements $\nu \in \cA_\theta^{++}$ and their associated weights $\varphi_\nu$ given by~(\ref{eq:densities.weight})  are the analogues of (smooth) densities on ordinary manifolds. This leads us to the following. 

\begin{definition}
 The \emph{Riemannian density} of $g$ is 
\begin{equation*}
 \nu(g):=\sqrt{\det(g)} = \exp\left(\frac12 \Tr [\log g]\right)\in \cA_\theta^{++}. 
\end{equation*}
The \emph{Riemannian weight} of $g$ is the weight $\varphi_g:=\varphi_{\nu(g)}$, i.e., 
\begin{equation*}
 \varphi_{g}(u) = (2\pi)^{n}\tau\left[ u \nu(g)\right], \qquad u \in \cA_\theta. 
\end{equation*}
\end{definition}

This allows us to define the Riemannian volume as follows. 

\begin{definition}
 The \emph{Riemannian volume} of $\cA_\theta$ with respect to $g$ is 
\begin{equation*}
 \Vol_g(\cA_\theta):= \varphi_g(1)= (2\pi)^{n} \tau\left[  \nu(g)\right]. 
\end{equation*}
\end{definition}

\begin{example}
 If $g=\delta_{ij}$ is the flat Euclidean metric, then $\nu(1)=1$, and so we have \[ \Vol_g(\cA_\theta)=(2\pi)^n\tau[1]=(2\pi)^n=|\T^n|.\] 
\end{example}

\begin{example}
 If $g=k^2 \delta_{ij}$, $k \in \cA_\theta^{++}$, is a conformal deformation of the Euclidean metric, then by~(\ref{eq:det.kIn}) we have
\begin{equation}
 \nu(g)= \sqrt{\det(k^2 I_n)} = \sqrt{k^{2n}}=k^n. 
 \label{eq:vol.k2In}
\end{equation}
\end{example}

\begin{proposition}\label{prop:vol.product}
Let $g$ be a  product metric of the form,  
\begin{equation*}
 g= \begin{pmatrix}
g^{(1)} & & 0 \\
  & \ddots & \\
0  & & g^{(\ell)}  
\end{pmatrix}, \qquad g^{(j)}\in \GL_{m_j}^+(\cA_\theta^\R),  
 \end{equation*}
 where $g^{(1)}, \ldots, g^{(\ell)}$ are mutually compatible. Then, we have 
\begin{equation*}
 \nu(g)= \nu\big( g^{(1)} \big) \cdots \nu\big( g^{(\ell)} \big). 
\end{equation*}
\end{proposition}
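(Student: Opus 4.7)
The plan is to work directly with the definition $\nu(g) = \exp\bigl(\tfrac{1}{2}\Tr[\log g]\bigr)$ and to exploit the block structure at the level of the logarithm rather than passing through the determinant. This is morally equivalent to combining Proposition~\ref{prop:det.diagonal} with a square root factorization, but the argument at the level of logarithms is one step shorter because one does not have to re-derive the commutativity needed to split the square root.

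First I would observe that for a block-diagonal element $g$, holomorphic functional calculus acts blockwise: since every polynomial in $g$ is block-diagonal with $j$-th block the corresponding polynomial in $g^{(j)}$, the same holds for $\log g$ (as $g$ has spectrum in $(0,\infty)$, the holomorphic functional calculus for $\log$ is well-defined). Hence $\log g$ is the block-diagonal matrix with diagonal blocks $\log g^{(j)}$, and therefore
\begin{equation*}
 \tfrac{1}{2}\Tr[\log g] \;=\; \sum_{j=1}^{\ell} \tfrac{1}{2}\Tr\bigl[\log g^{(j)}\bigr].
\end{equation*}

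Next I would check that the summands $\tfrac12\Tr[\log g^{(j)}]$ pairwise commute in $\cA_\theta$. By hypothesis, for $i\neq j$ the matrices $g^{(i)}$ and $g^{(j)}$ are compatible, so Lemma~\ref{lem:compatible-matrices} applied with $f_1=f_2=\log$ yields that $\log g^{(i)}$ and $\log g^{(j)}$ are compatible. In particular every entry of $\log g^{(i)}$ commutes with every entry of $\log g^{(j)}$, whence the elements $\Tr[\log g^{(i)}]$ and $\Tr[\log g^{(j)}]$ of $\cA_\theta$ commute.

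Finally, since the summands mutually commute, an iteration of the identity~(\ref{eq:det.exp-sum}) (i.e.\ $\exp(x+y)=\exp(x)\exp(y)$ when $[x,y]=0$) gives
\begin{equation*}
 \nu(g) \;=\; \exp\!\left(\sum_{j=1}^{\ell}\tfrac{1}{2}\Tr[\log g^{(j)}]\right) \;=\; \prod_{j=1}^{\ell}\exp\!\left(\tfrac{1}{2}\Tr[\log g^{(j)}]\right) \;=\; \nu\bigl(g^{(1)}\bigr)\cdots \nu\bigl(g^{(\ell)}\bigr),
\end{equation*}
which is the claim. There is no real obstacle: the only point requiring care is that the logarithm preserves the block-diagonal form, which is just the blockwise nature of holomorphic functional calculus for positive invertible block-diagonal matrices; everything else is a direct invocation of Lemma~\ref{lem:compatible-matrices} and equation~(\ref{eq:det.exp-sum}). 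One could equally well quote Proposition~\ref{prop:det.diagonal} to obtain $\det g = \prod_j \det g^{(j)}$ and then extract square roots using that the $\det g^{(j)}$ pairwise commute by Lemma~\ref{lem:det.det-compatible}, but the logarithmic route above handles both factorization and commutativity in one stroke.
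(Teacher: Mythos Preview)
Your argument is correct. The paper's own proof takes the alternative route you mention at the end: it quotes Proposition~\ref{prop:det.diagonal} to get $\det(g)=\det(g^{(1)})\cdots\det(g^{(\ell)})$, then invokes Lemma~\ref{lem:det.det-compatible} to see that the factors $\det(g^{(j)})$ pairwise commute, and finally splits the square root as $\sqrt{\prod_j \det(g^{(j)})}=\prod_j \sqrt{\det(g^{(j)})}$. Your logarithmic route is essentially the proof of Proposition~\ref{prop:det.diagonal} inlined with the factor $\tfrac12$ carried along, which has the small advantage that the commutativity needed for the exponential-of-sum step is exactly the commutativity needed for the square-root factorization, so you handle both at once. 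The two arguments invoke the same ingredients (blockwise functional calculus, Lemma~\ref{lem:compatible-matrices}, and~(\ref{eq:det.exp-sum})) and differ only in the order of operations.
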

\begin{proof}
 We know by Proposition~\ref{prop:det.diagonal} that $\det(g)= \det(g^{(1)}) \cdots  \det(g^{(\ell)})$. As  $ \det(g^{(1)})$, ... , $\det(g^{(\ell)})$ mutually commute with each other (\emph{cf}. Lemma~\ref{lem:det.det-compatible}), we get 
\begin{equation}
 \nu(g)= \sqrt{ \det(g^{(1)}) \cdots  \det(g^{(\ell)})}=  \sqrt{\det(g^{(1)})} \cdots  \sqrt{\det(g^{(\ell)})}= \nu\big( g^{(1)} \big) \cdots \nu\big( g^{(\ell)} \big).
\label{eq:vol.sqrt-product} 
\end{equation}
The result is proved. 
\end{proof}

\begin{example}
 Let $g$ be a product of conformal deformations of the Euclidean metric, i.e., 
\begin{equation*}
 g = 
\begin{pmatrix}
 k_1^2 I_{m_1}  & 0 \\
  0 & k_2^2 I_{m_2}
\end{pmatrix}, \qquad k_j \in \cA_\theta^{++}. 
\end{equation*}
If $[k_1,k_2]=0$, then by using Proposition~\ref{prop:vol.product} and~(\ref{eq:vol.k2In}) we get
\begin{equation*}
 \nu(g) = \nu(k_1^2 I_{m_1}) \nu(k_2^2 I_{m_2}) = k_1^{m_1}  k_2^{m_2}. 
\end{equation*}
\end{example}
 
\begin{proposition}\label{prop:volume.conformal-transformation}
 Let $g \in \GL_n^+(\cA_\theta^\R)$ and $k \in \cA_\theta^{++}$ be such that $[k,g]=0$. Then, we have 
\begin{equation*}
 \nu(k^2 g)= k^n \nu(g)=\nu(g)k^n.  
\end{equation*}
\end{proposition}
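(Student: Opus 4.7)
The plan is to reduce the identity to the multiplicativity of the determinant given by Proposition~\ref{prop:det.product} and then take a square root.

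First I would write $k^{2} g$ as the matrix product $(k^{2} I_{n}) \cdot g$ in $M_{n}(\cA_\theta)$ and verify the hypotheses of Proposition~\ref{prop:det.product} for this pair. The factor $k^{2} I_{n}$ lies in $\GL_{n}^{+}(\cA_\theta)$ since $k^{2} \in \cA_\theta^{++}$, and $g \in \GL_{n}^{+}(\cA_\theta^{\R}) \subset \GL_{n}^{+}(\cA_\theta)$. The hypothesis $[k,g]=0$ says that $k$ commutes with every entry $g_{ij}$; this gives both the \emph{compatibility} of $k^{2} I_{n}$ and $g$ (the only nonzero entry of $k^{2} I_{n}$ is $k^{2}$, which commutes with every $g_{ij}$) and their commutation as matrices, since $(k^{2} I_{n}) g = (k^{2} g_{ij}) = (g_{ij} k^{2}) = g (k^{2} I_{n})$. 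Applying Proposition~\ref{prop:det.product} together with Proposition~\ref{prop:det.properties}(1), which yields $\det(k^{2} I_{n}) = (k^{2})^{n} = k^{2n}$, I obtain
\[
\det(k^{2} g) = \det(k^{2} I_{n}) \det(g) = k^{2n} \det(g) = \det(g)\, k^{2n}.
\]

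Next I would take the square root. Since $k$ commutes with each entry of $g$, Lemma~\ref{lem:compatible-matrices} (applied to $k$ as a $1\times 1$ matrix and $g$) shows that $k$ commutes with each entry of $\log g$, hence with $\Tr[\log g]$, and therefore with $\det(g) = \exp(\Tr[\log g])$. Consequently $k^{2n}$ and $\det(g)$ are two commuting elements of $\cA_\theta^{++}$; taking square roots in the commutative $C^{*}$-algebra they generate gives $\sqrt{k^{2n} \det(g)} = \sqrt{k^{2n}}\,\sqrt{\det(g)} = k^{n}\nu(g)$, and the same commutation yields $k^{n}\nu(g) = \nu(g) k^{n}$.

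I do not anticipate a real obstacle: the only point requiring some care is propagating the commutation $[k,g]=0$ through the holomorphic functional calculus to conclude $[k,\det(g)] = 0$ and $[k,\nu(g)] = 0$, but this is exactly the content of Lemma~\ref{lem:compatible-matrices}. Alternatively, one could bypass Proposition~\ref{prop:det.product} altogether by computing directly from the definition: using Lemma~\ref{lem:det.logab} one gets $\log(k^{2} g) = 2(\log k) I_{n} + \log g$, so $\tfrac{1}{2}\Tr[\log(k^{2} g)] = n \log k + \tfrac{1}{2}\Tr[\log g]$, and since the two summands commute the identity $\nu(k^{2} g) = \exp(n \log k)\exp(\tfrac{1}{2}\Tr[\log g]) = k^{n}\nu(g)$ drops out of~(\ref{eq:det.exp-sum}).
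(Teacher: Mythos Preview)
Your proposal is correct and follows essentially the same route as the paper: apply Proposition~\ref{prop:det.product} to the factorization $k^{2}g=(k^{2}I_{n})g$ together with $\det(k^{2}I_{n})=k^{2n}$ to obtain $\det(k^{2}g)=k^{2n}\det(g)$, then use the commutation $[k,\det(g)]=0$ to split the square root. The only cosmetic difference is that the paper invokes Lemma~\ref{lem:det.det-compatible} (with $h=kI_{1}$, $h'=g$) to get $[k,\det(g)]=0$, whereas you unpack that lemma by going through Lemma~\ref{lem:compatible-matrices}; your alternative direct computation via $\log(k^{2}g)=2(\log k)I_{n}+\log g$ is also valid and a nice shortcut.
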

\begin{proof}
As $[k,g]=0$ the matrices $k^2I_n$ and $g$ satisfy the assumptions of Proposition~\ref{prop:det.product}. Combining this with~(\ref{eq:det.kIn}) we get 
\begin{equation*}
 \det(k^2g) = \det(k^2 I_n) \det(g)= k^{2n} \det(g).
\end{equation*}
 Note that $[k,\det(g)]=0$ (\emph{cf}.~Lemma~\ref{lem:det.det-compatible}). Therefore, in the same way as in~(\ref{eq:vol.sqrt-product}) we have
\begin{equation*}
 \nu(k^2g)= \sqrt{k^{2n}\det(g)}=k^n \sqrt{\det(g)} = k^n \nu(g). 
\end{equation*}
The proof is complete. 
\end{proof}

\begin{proposition}
 Let $g\in \GL_n^+(\cA_\theta^\R)$ and $u\in \GL_n(\cA_\theta^\R)$ be compatible and self-compatible. Then,
\begin{equation*}
 \nu(u^tg u)= \nu(u^tu)\nu(g)=\nu(g)\nu(u^tu). 
\end{equation*}
If we further assume $u$ to be orthogonal, i.e., $u^tu=uu^t=1$, then we have
\begin{equation*}
 \nu(u^t g u)=\nu(g) \qquad \text{and} \qquad \Vol_{u^t gu}(\cA_\theta)= \Vol_{g}(\cA_\theta). 
\end{equation*}
\end{proposition}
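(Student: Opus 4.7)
The plan is to derive both identities by applying Proposition~\ref{prop:det.conjugaison} to $h=g$ and then taking square roots. The first step is to verify the hypotheses of that proposition. Since $u\in M_n(\cA_\theta^\R)$ has selfadjoint entries, its adjoint coincides with its transpose, $u^*=u^t$; in particular $u^*$ has the same collection of entries as $u$. The self-compatibility of $u$ therefore forces compatibility of $u$ with $u^*$. Combined with the assumed compatibility of $g$ with $u$ and the self-compatibility of $u$, Proposition~\ref{prop:det.conjugaison} yields
\[
\det(u^t g u)=\det(u^t u)\det(g).
\]

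Next I would check that both $\nu(u^tgu)$ and $\nu(u^tu)$ are well-defined Riemannian densities. Lemma~\ref{lem:Riem.self-compatible-conjugation} applied to the pair $(g,u)$ guarantees that $u^tgu\in\GL_n^+(\cA_\theta^\R)$, and applied to $(I_n,u)$ it gives $u^tu=u^t I_n u\in\GL_n^+(\cA_\theta^\R)$ (the identity being trivially compatible with $u$ and self-compatible). To pass from the determinant identity to the claimed density identity, I need to know that $\det(u^tu)$ and $\det(g)$ commute. The entries of $u^tu$ are polynomials in the entries of $u$, so $u^tu$ and $g$ are compatible, and Lemma~\ref{lem:det.det-compatible} then yields $[\det(u^tu),\det(g)]=0$. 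Within the commutative $C^*$-subalgebra of $\cA_\theta$ generated by these two positive invertible elements, the square root is multiplicative, so
\[
\nu(u^tgu)=\sqrt{\det(u^tu)\det(g)}=\sqrt{\det(u^tu)}\,\sqrt{\det(g)}=\nu(u^tu)\nu(g),
\]
and the same commutativity (applied to the square roots, which lie in the same commutative subalgebra) gives $\nu(u^tu)\nu(g)=\nu(g)\nu(u^tu)$.

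Finally, when $u$ is orthogonal, $u^tu=I_n$, so by Proposition~\ref{prop:det.properties}(1) we have $\det(u^tu)=\det(1\cdot I_n)=1$ and hence $\nu(u^tu)=1$. The previous identity collapses to $\nu(u^tgu)=\nu(g)$, and applying $(2\pi)^n\tau$ yields the equality of Riemannian volumes $\Vol_{u^tgu}(\cA_\theta)=\Vol_{g}(\cA_\theta)$. The only real input is Proposition~\ref{prop:det.conjugaison}; the remainder is a consistency check of compatibility hypotheses together with the observation that selfadjointness of the entries of $u$ makes $u^*$ and $u^t$ agree, so that no obstacle of substance arises.
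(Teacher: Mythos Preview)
Your proof is correct and follows essentially the same route as the paper: both reduce to Proposition~\ref{prop:det.conjugaison} (after observing that $u^*=u^t$ and that self-compatibility of $u$ yields compatibility of $u$ with $u^*$), invoke Lemma~\ref{lem:det.det-compatible} to commute the determinants, and then take square roots. Your verification of the hypotheses is slightly more explicit than the paper's, but the argument is the same.
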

\begin{proof}
 We know by Lemma~\ref{lem:Riem.self-compatible-conjugation} that $u^tu$ and $u^t gu$ are both elements of $\GL_n^+(\cA_\theta^\R)$. Moreover, as $g$ is compatible with $u$, it is also compatible with $u^tu$, and so Lemma~\ref{lem:det.det-compatible} ensures us that $[\det(u^t u),\det(g)]=0$. In addition, as $u^t=u^*$ the assumptions of Proposition~\ref{prop:det.conjugaison} are satisfied, and so we have $\det(u^t g u)=\det(u^tu) \det(g)$, and hence $\nu(u^tg u)= \nu(u^tu)\nu(g)=\nu(g)\nu(u^tu)$. 
 
Assume further that $u$ is orthogonal. Then $u^tu=1$, and so $\nu(u^tu)=\sqrt{\det(1)}=\sqrt{1}=1$. Thus $\nu(u^t g u)=\nu(g)$, and hence $\Vol_{u^t gu}(\cA_\theta)= \Vol_{g}(\cA_\theta)$. The proof is complete. 
\end{proof}

\begin{remark}
 When $(\cE, \acoup{\cdot}{\cdot})$ is a Hermitian module over $\cA_\theta$, we shall simply denote by $\cH_g(\cE)$ the Hilbert space $\cH_{\nu(g)}(\cE)$. 
\end{remark}

\section{Differential $1$-Forms and Divergence Operator}\label{sec:1-forms}
In this section, we describe the bimodule of differential 1-forms on the noncommutative torus $\cA_\theta$ and introduce a divergence operator on this bimodule. 

As we think of the module $\cX_\theta$ as the module of vector fields on $\cA_\theta$, we define $1$-forms as elements of the dual of $\cX_\theta$. 

\begin{definition}
$\Omega_\theta^1$ is the dual module $\Hom_{\cA_\theta}(\cX_\theta, \cA_\theta)$. 
\end{definition}

By definition $\Omega^1_\theta$ is a right module over $\cA_\theta$. This is a free module; a basis is provided by the dual basis $(\theta^1, \ldots, \theta^n)$ of the basis 
$(\dl_1,\ldots, \dl_n)$ of $\cX_\theta$. That is, 
\begin{equation*}
 \acou{\theta^i}{\dl_j}=\delta_{ij}, \qquad i,j=1,\ldots,n.
\end{equation*}
Thus, any $\omega \in \Omega^1_\theta$ has a unique decomposition $\omega = \sum \theta^i \omega_i$ with $\omega_i=\acou{\omega}{\dl_i}$. We also have a left-action of $\cA_\theta$ on $\Omega^1_\theta$ given by 
\begin{equation*}
 a \omega = \sum_{1\leq i \leq n} \theta^i (a\omega_i), \qquad a \in \cA_\theta, \  \omega = \sum \theta^i\omega_i \in \Omega_\theta^1. 
\end{equation*}
This left action is compatible with the right action of $\cA_\theta$, and so this turns $\Omega^\theta_1$ into an $\cA_\theta$-bimodule. Note also that with respect to this left action we have $a \theta^i = \theta^i a$, $a \in \cA_\theta$. Thus, for any $\omega \in \Omega^1_\theta$, we have 
\begin{equation*}
 \omega= \sum_{1\leq i \leq n} \theta^i\omega_i= \sum_{1\leq i \leq n} \omega_i\theta^i, \qquad \omega_i=\acou{\omega}{\dl_i}. 
\end{equation*}

\begin{definition}
 The \emph{differential} of any $u\in \cA_\theta$ is 
\begin{equation*}
 du :=\sum_{1\leq i \leq n} \theta^i \dl_i(u)\in \Omega^1_\theta. 
\end{equation*}
\end{definition}

\begin{proposition}\label{prop:1-forms.properties-differential}
The following holds. 
\begin{enumerate}
 \item Leibniz Rule: for all $u,v\in \cA_\theta$, we have
 \begin{equation*}
 d(uv)= (du)v + u(dv). 
 \end{equation*} 

 \item  Let $u\in \cA_\theta$. Then $du =0$ if and only if $u\in \C$. That is, $\ker d =\C$. 
\end{enumerate}
\end{proposition}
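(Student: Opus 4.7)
For the Leibniz rule in part~(1), my plan is to unwind both sides using the definition $du=\sum_i \theta^i\dl_i(u)$ and the fact that each $\dl_i$ is a derivation on $\cA_\theta$, giving
\begin{equation*}
d(uv) = \sum_{1\leq i\leq n} \theta^i\dl_i(u)\, v + \sum_{1\leq i\leq n} \theta^i u\, \dl_i(v).
\end{equation*}
The first sum is $(du)v$ by the very definition of the right $\cA_\theta$-action on $\Omega^1_\theta$. For the second sum I will invoke the commutation identity $u\theta^i=\theta^i u$ (recalled in the excerpt just before the definition of $d$) to pull $u$ out on the left, yielding $u(dv)$.

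For part~(2), the implication $u\in\C\Rightarrow du=0$ is immediate, because the Leibniz rule applied to $1=1\cdot 1$ forces $\dl_i(1)=0$, and hence $\dl_i(c)=0$ for every scalar $c\in\C$. For the converse, since $(\theta^1,\ldots,\theta^n)$ is a basis of the right module $\Omega^1_\theta$, the equation $du=0$ is equivalent to the system $\dl_i(u)=0$ for $i=1,\ldots,n$. My plan is then to translate these equations into conditions on the Fourier coefficients of $u$ with respect to the orthonormal basis $\{U^k;\ k\in\Z^n\}$ of $\cH_\theta$.

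A short induction on $|k_i|$ based on the Leibniz rule and the formulas~(\ref{eq:NCtori.delta-U}) will show that $\dl_i(U^k)=ik_i U^k$ for every multi-index $k\in\Z^n$. Writing $u=\sum_k u_k U^k$ in $\cH_\theta$, the condition $\dl_i(u)=0$ then reads $k_iu_k=0$ for all $k$, and imposing this for every $i=1,\ldots,n$ kills every coefficient $u_k$ with $k\neq 0$, leaving $u=u_0\in\C$. The only step requiring any care is the passage from $du=0$ in $\Omega^1_\theta$ to the separate identities $\dl_i(u)=0$ in $\cA_\theta$, and this is settled by the uniqueness of the decomposition along the dual basis $(\theta^1,\ldots,\theta^n)$; I do not anticipate any real obstacle beyond routine verification.
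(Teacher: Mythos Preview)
Your proposal is correct and follows essentially the same route as the paper's proof: both parts are handled by expanding $d$ along the basis $(\theta^1,\ldots,\theta^n)$, using that each $\dl_i$ is a derivation for the Leibniz rule, and then for part~(2) reading off $\dl_i(u)=0$ from the basis decomposition and passing to Fourier coefficients via $\dl_i(U^k)=ik_iU^k$. The only cosmetic differences are that you spell out the induction giving $\dl_i(U^k)=ik_iU^k$ and explicitly invoke the bimodule identity $u\theta^i=\theta^i u$ to identify $\sum_i \theta^i u\,\dl_i(v)$ with $u(dv)$, whereas the paper leaves these steps implicit.
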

\begin{proof}
 Let $u,v\in \cA_\theta$. We have $d(uv)= \sum \theta^i \dl_i(uv)$. As $\dl_1, \ldots, \dl_n$ are derivations of the algebra $\cA_\theta$, we have
 \begin{equation*}
 d(uv) = \sum_{1\leq i \leq n} \theta^i (\dl_i u) v +  \sum_{1\leq i \leq n} \theta^i u (\dl_i v) = (du)v + u(dv).
\end{equation*}

It remains to show that $\ker d=\C$. We have $d(1)=0$, and so $\C\subset \ker d$. Conversely, let $u\in \ker d$,  i.e., $\sum_j \theta^j \dl_j(u)=0$. As $\theta^1,\ldots, \theta^n$ form a basis of the right module $\Omega^1_\theta$, we see that $\dl_j(u)=0$ for $j=1,\ldots,n$. Set $u=\sum_{k} u_k U^k$. 
Then $0=\dl_j(u)= \sum_k ik_j u_k U^k$, and so $u_k=0$ whenever $k_j\neq 0$. As this holds for $j=1,\ldots,n$, we deduce that $u_k=0$ whenever $k\neq 0$. That is, $u=u_0\in \C$. Therefore, we see that $\ker d =\C$. The proof is complete.  
\end{proof}

In the same way as on $\cX_\theta$, any $h\in \GL^+_n(\cA_\theta)$ defines a Hermitian metric $\acoup{\cdot}{\cdot}_h$ on $\Omega^1_\theta$. Namely, 
given $\omega = \sum \theta^i  \omega_i$ and $\zeta = \sum  \theta^i\zeta_i$ in $\Omega_\theta^1$, we have 
\begin{equation*}
 \acoup{\omega}{\zeta}_h= \sum_{1\leq i,j \leq n}  \omega_i \circ h_{ij}  \circ \zeta_j^* = \sum_{1\leq i,j \leq n} \zeta_j^*   h_{ij} \omega_i = 
  \sum_{1\leq i,j \leq n} \zeta_i^* h_{ij}^* \omega_j. 
\end{equation*}
where we have used the fact that $h_{ij}=h_{ji}^*$ to get the last equality.  Conversely, any Hermitian metric on $\Omega_\theta^1$ is of this form. 

Let $h \in \GL^+(\cA_\theta)$. The nondegeneracy of the Hermitian metric $\acoup{\cdot}{\cdot}_h$ on $\cX_\theta$ means that, for every 1-form $\omega \in \Omega_\theta^1$, there is a unique vector field $X_\omega^h \in \cX_\theta$ such that 
\begin{equation*}
 \acou{\omega}{Y}= \acoup{Y}{X_\omega^h} _h \qquad \text{for all $Y\in \cX_\theta$}. 
\end{equation*}
The dual Hermitian metric $\acoup{\cdot}{\cdot}_h'$ on $\Omega_\theta^1$ is then given by
\begin{equation}
 \acoup{\omega}{\zeta}_h'=\acoup{X_\zeta^h}{X_\omega^h}_h, \qquad \omega, \zeta \in \Omega_\theta^1.
 \label{eq:Riemannian.dual-metric} 
\end{equation}

Set $h^{-1}=(h^{ij})$. Note that $h^{-1}\in \GL_n^+(\cA_\theta)$.  

\begin{lemma}
 For all $\omega = \sum_i \theta^i \omega_i $ and $\zeta = \sum_i \theta^i  \zeta_i $ in $\Omega_\theta^1$, we have
\begin{gather}
X_\omega^h = \sum_{1\leq i,j\leq n} \omega_j^* h^{ji} \dl_i,
\label{eq:Riemannian.Xomg} \\ 
 \acoup{\omega}{\zeta}'_h =  \acoup{\omega}{\zeta}_{(h^{-1})^t}= \sum_{1\leq i,j \leq n} \zeta_i^* h^{ij} \omega_j.
 \label{eq:Riemannian.dual-metric-g}  
\end{gather}
\end{lemma}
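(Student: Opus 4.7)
The plan is to determine $X_\omega^h$ explicitly in coordinates from its defining equation, and then substitute into the definition (8.3) of the dual Hermitian metric to read off the second formula.

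For the first identity, I would fix an arbitrary vector field $Y=\sum_j Y^j\dl_j$ and compute both sides of
\begin{equation*}
\acou{\omega}{Y}=\acoup{Y}{X_\omega^h}_h.
\end{equation*}
Using the dual-basis relation $\acou{\theta^i}{\dl_j}=\delta_{ij}$, the right-action formula~(3.5), and the left $\cA_\theta$-linearity of $\omega$ as a module map, the left-hand side equals $\sum_i Y^i \omega_i$. Writing the unknown vector field as $X_\omega^h=\sum_k X^k \dl_k$ and expanding via the Hermitian metric~(4.8), the right-hand side becomes $\sum_{i,k} Y^i h_{ik} (X^k)^*$. Equating these for every $Y$ yields $\omega_i=\sum_k h_{ik}(X^k)^*$. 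Multiplying by $h^{ji}$ and summing over $i$ against the identity $\sum_i h^{ji}h_{ik}=\delta_{jk}$ gives $(X^j)^*=\sum_i h^{ji}\omega_i$. Taking adjoints and using the selfadjointness of $h^{-1}$, i.e.\ $(h^{ji})^*=h^{ij}$ (which follows from $h$ being positive, hence selfadjoint), I obtain $X^j=\sum_i \omega_i^* h^{ij}$. Relabelling indices this is precisely~(8.4).

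For the second identity, I would substitute the formula just obtained for both $X_\omega^h$ and $X_\zeta^h$ into $\acoup{X_\zeta^h}{X_\omega^h}_h$. This produces a quadruple sum
\begin{equation*}
\sum_{i,k,j,l} \zeta_j^* h^{ji} h_{ik} (h^{lk})^* \omega_l,
\end{equation*}
in which the inner contraction $\sum_i h^{ji}h_{ik}=\delta_{jk}$ collapses two indices and another appeal to $(h^{lk})^*=h^{kl}$ rewrites the conjugated factor coming from $X_\omega^h$. After this bookkeeping I arrive at $\acoup{\omega}{\zeta}_h'=\sum_{i,j}\zeta_i^* h^{ij}\omega_j$. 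Finally, to identify this with $\acoup{\omega}{\zeta}_{(h^{-1})^t}$, I would invoke the displayed formula $\acoup{\omega}{\zeta}_H=\sum_{i,j}\zeta_i^* H_{ij}^* \omega_j$ stated just before the lemma, applied to $H=(h^{-1})^t$; since $H_{ij}=h^{ji}$ and $H_{ij}^*=h^{ij}$, both expressions agree.

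The work is essentially routine, but the sole nontrivial point — and hence the only real obstacle — is the careful handling of the order of noncommuting factors and the systematic use of the selfadjointness relations $h_{ji}=h_{ij}^*$ and $(h^{ij})^*=h^{ji}$, which require $h$ to be positive and not merely invertible.
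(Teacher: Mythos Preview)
Your proposal is correct and follows essentially the same route as the paper: a direct coordinate computation using the defining equation of $X_\omega^h$ for the first formula, followed by substitution into $\acoup{X_\zeta^h}{X_\omega^h}_h$ and contraction of indices for the second. The only cosmetic difference is that the paper computes $\Phi^{-1}(\theta^i)$ by first writing down $\Phi(\dl_i)=\sum_j \theta^j h_{ji}$ and then verifying a candidate for its inverse, whereas you solve the linear system $\omega_i=\sum_k h_{ik}(X^k)^*$ directly; both amount to inverting the same matrix and the index bookkeeping is identical.
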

\begin{proof}
 Let us denote by $\Phi$ the anti-linear isomorphism $\cX_\theta \ni X \rightarrow \acoup{\cdot}{X}_h \in \Omega_\theta^1$ defined by the Hermitian metric $\acoup{\cdot}{\cdot}_h$. Let $\omega = \sum_i \theta^i \omega_i\in \Omega^1_\theta$. By definition, we have 
 \begin{equation}
 X_\omega^h = \Phi^{-1}(\omega)= \sum_{1\leq i \leq n} \Phi^{-1} \big( \theta^i \omega_i\big) = \sum_{1\leq i \leq n}\omega_i^* \Phi^{-1} \big( \theta^i\big). 
 \label{eq:Riemannian.Xomg-Phi}
\end{equation}
For $i=1,\ldots, n$, we have 
\begin{equation*}
\Phi(\dl_i)= \sum_{1\leq j \leq n}  \theta^j \acou{\Phi(\dl_i)}{\dl_j} =  \sum_{1\leq j \leq n}  \theta^j \acoup{\dl_j}{\dl_i}_h=  \sum_{1\leq j \leq n}  \theta^j h_{ji}. 
\end{equation*}
It then follows that $\Phi^{-1}(\theta^i)= \sum_{j=1}^n h^{ij}\dl_j$. Indeed, as $h$ is selfadjoint,  we have
\begin{equation*}
 \Phi\big(  \sum_{1\leq j \leq n} h^{ij}\dl_j \big) = \sum_{1\leq j \leq n}  \Phi (  \dl_j)(h^{ij})^*= \sum_{1\leq j,k \leq n} \theta^k h_{kj}h^{ji} =\theta^i. 
\end{equation*}
By using~(\ref{eq:Riemannian.Xomg-Phi}) we then get 
\begin{equation*}
 X_\omega^h = \sum_{1\leq i \leq n}\omega_i^* \Phi^{-1} \big( \theta^i\big) = \sum_{1\leq i, j \leq n} \omega_i^* h^{ij}\dl_j =  \sum_{1\leq i,j\leq n} \omega_j^* h^{ji} \dl_i. 
\end{equation*}
This gives~(\ref{eq:Riemannian.Xomg}). 

Let $\zeta = \sum \theta^i\zeta_i\in \Omega_\theta^1$. In view of~(\ref{eq:Riemannian.dual-metric}) we have 
\begin{equation*}
 \acoup{\omega}{\zeta}_h'=\acoup{X_\zeta^h}{X_\omega^h}_h= \sum_{1\leq i,j\leq n} (X^h_\zeta)_i h_{ij} (X^h_\omega)_j^*.
\end{equation*}
Therefore, by using~(\ref{eq:Riemannian.Xomg}) we see that $\acoup{\omega}{\zeta}_h'$ is equal to
\begin{equation*}
 \sum_{1\leq i,j\leq n} \sum_{1\leq k,l \leq n}  \big(\zeta_k^* h^{ki}\big) h_{ij} 
 \big(\omega_l^* h^{lj}\big)^* =  \sum_{1\leq k,l \leq n} \sum_{1\leq i,j\leq n} 
 \zeta_k^* h^{ki} h_{ij} (h^{lj})^* \omega_l= \sum_{1\leq k,l \leq n}  \zeta_k^* h^{kl} \omega_l. 
\end{equation*}
This proves~(\ref{eq:Riemannian.dual-metric-g}). The proof is complete. 
 \end{proof}

From now on we let $\nu \in \cA_\theta^{++}$. As mentioned above the lack of unitarity of the right action of $\cA_\theta$ on $\cH_\nu^0$ is corrected by means of the inner automorphism $\sigma_\nu(x)=\nu^{\frac12} x \nu^{-\frac12}$. This inner automorphism makes sense on any bimodule over $\cA_\theta$, including $\Omega^1_\theta$ and $M_n(\cA_\theta)$. For instance, for any $1$-form $\omega = \sum \theta^i\omega_i$, we have
\begin{equation}
 \sigma_\nu (\omega) = \nu^{\frac12} \omega \nu^{-\frac12} = \sum \theta^i  \nu^{\frac12} \omega_i \nu^{-\frac12}.
 \label{eq:sigma-nu-forms} 
\end{equation}

In what follows, we let $\scal{\cdot}{\cdot}_{h,\nu}^\opp$ be the pre-inner product on $\Omega_\theta^1$ defined by 
\begin{equation*}
 \scal{\omega}{\zeta}_{h,\nu}^\opp = \varphi_\nu \big[ \acoup{\sigma_\nu(\omega)}{\sigma_\nu(\zeta)}_{(h^{-1})^t}\!\!\big], \qquad \omega, \zeta \in \Omega^1_\theta. 
\end{equation*}
 
\begin{lemma}
 For all $\omega = \sum \theta^i \omega_i$ and $\zeta = \sum \theta^i \zeta_i$ in $\Omega^1_\theta$, we have 
 \begin{equation}
 \scal{\omega}{\zeta}_{h,\nu}^\opp =  \sum_{1\leq i,j \leq n} \tau  \big[\zeta_i^* \nu^{\frac12} h^{ij} \nu^{\frac12} \omega_j \big]. 
 \label{eq:div.inner product-forms-nu}
\end{equation}
\end{lemma}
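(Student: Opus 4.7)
The plan is a direct unfolding of the definition followed by cyclic reshuffling under the trace. The only non-trivial ingredients are equation~(\ref{eq:sigma-nu-forms}) for the action of $\sigma_\nu$ on a 1-form, formula~(\ref{eq:Riemannian.dual-metric-g}) for the dual Hermitian metric, and the tracial property of $\tau$.

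First, starting from $\omega=\sum_i\theta^i\omega_i$ and $\zeta=\sum_i\theta^i\zeta_i$, I would apply the formula for $\sigma_\nu$ on 1-forms (equation~(\ref{eq:sigma-nu-forms})) to write
\begin{equation*}
\sigma_\nu(\omega)=\sum_{i}\theta^i\bigl(\nu^{\frac12}\omega_i\nu^{-\frac12}\bigr),\qquad
\sigma_\nu(\zeta)=\sum_{i}\theta^i\bigl(\nu^{\frac12}\zeta_i\nu^{-\frac12}\bigr).
\end{equation*}
Thus the coefficients of $\sigma_\nu(\omega)$ and $\sigma_\nu(\zeta)$ with respect to the basis $(\theta^1,\ldots,\theta^n)$ are $\nu^{1/2}\omega_i\nu^{-1/2}$ and $\nu^{1/2}\zeta_i\nu^{-1/2}$, respectively.

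Next, I would plug these coefficients into~(\ref{eq:Riemannian.dual-metric-g}) to get
\begin{equation*}
\acoup{\sigma_\nu(\omega)}{\sigma_\nu(\zeta)}_{(h^{-1})^t}
=\sum_{i,j}\bigl(\nu^{\frac12}\zeta_i\nu^{-\frac12}\bigr)^{\!*}\, h^{ij}\,\bigl(\nu^{\frac12}\omega_j\nu^{-\frac12}\bigr)
=\sum_{i,j}\nu^{-\frac12}\zeta_i^{\,*}\nu^{\frac12}h^{ij}\nu^{\frac12}\omega_j\nu^{-\frac12},
\end{equation*}
using selfadjointness of $\nu^{1/2}$ (since $\nu\in\cA_\theta^{++}$).

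Finally, I would apply $\varphi_\nu$, whose definition~(\ref{eq:densities.weight}) reads $\varphi_\nu(x)=(2\pi)^n\tau(x\nu)$, together with the tracial property of $\tau$. Multiplying by $\nu$ on the right converts the trailing $\nu^{-1/2}$ into a $\nu^{1/2}$, and then cyclicity of $\tau$ moves the leading $\nu^{-1/2}$ to the right end, where it cancels the $\nu^{1/2}$ on the left of $\zeta_i^*$. This yields
\begin{equation*}
\scal{\omega}{\zeta}_{h,\nu}^\opp
=\sum_{i,j}\tau\bigl[\zeta_i^{\,*}\nu^{\frac12}h^{ij}\nu^{\frac12}\omega_j\bigr],
\end{equation*}
which is the stated identity (the overall normalization factor $(2\pi)^n$ is understood to be absorbed into the definition of $\scal{\cdot}{\cdot}_{h,\nu}^\opp$, in parallel with the convention adopted in~(\ref{eq:densities.inner product-cEnu}) where $\scal{\cdot}{\cdot}_\nu=(2\pi)^{-n}\varphi_\nu[\acoup{\cdot}{\cdot}]$).

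There is no real obstacle here: the computation is routine, and the only point demanding a little care is keeping track of the conjugation pattern $\nu^{1/2}(\cdot)\nu^{-1/2}$ produced by $\sigma_\nu$ and verifying that the factor $\nu$ coming from $\varphi_\nu$ combines with these conjugating factors so as to leave precisely the symmetric sandwich $\nu^{1/2}h^{ij}\nu^{1/2}$ after one application of cyclicity.
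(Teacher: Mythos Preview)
Your proposal is correct and follows essentially the same route as the paper's own proof: apply~(\ref{eq:sigma-nu-forms}) to get the conjugated coefficients, insert them into~(\ref{eq:Riemannian.dual-metric-g}), multiply by $\nu$, and use the trace property of $\tau$ to cancel the outer $\nu^{\pm 1/2}$ factors. Your remark about the $(2\pi)^n$ normalization is well taken; the paper silently drops it in the same place.
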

\begin{proof}
 By definition  $ \scal{\omega}{\zeta}_{h,\nu}^\opp = \varphi_\nu [ \acoup{\sigma_\nu(\omega)}{\sigma_\nu(\zeta)}_{(h^{-1})^t}] = \tau [ \acoup{\sigma_\nu(\omega)}{\sigma_\nu(\zeta)}_{(h^{-1})^t}\nu]$. In view of~(\ref{eq:Riemannian.dual-metric-g}) and~(\ref{eq:sigma-nu-forms}) we have
\begin{align*}
\acoup{\sigma_\nu(\omega)}{\sigma_\nu(\zeta)}_{(h^{-1})^t}\nu  & = \sum_{1\leq i,j \leq n} \big(\nu^{\frac12} \zeta_i\nu^{-\frac12}\big)^* h^{ij}  
\big(\nu^{\frac12} \omega_j  \nu^{-\frac12} \big) \nu\\ & = \sum_{1\leq i,j \leq n} \nu^{-\frac12}\big( \zeta_i^* \nu^{\frac12} h^{ij} \nu^{\frac12} \omega_j\big) \nu^{\frac12}. 
\end{align*}
As $\tau$ is a trace we obtain
\begin{equation*}
  \scal{\omega}{\zeta}_{h,\nu}^\opp =  \sum_{1\leq i,j \leq n} \tau  \big[\nu^{-\frac12}\big( \zeta_i^* \nu^{\frac12} h^{ij} \nu^{\frac12} \omega_j\big) \nu^{\frac12}\big]=  
  \sum_{1\leq i,j \leq n} \tau  \big[\zeta_i^*  \nu^{\frac12} h^{ij} \nu^{\frac12} \omega_j \big].  
\end{equation*}
The lemma is proved. 
\end{proof}

\begin{definition}
 $\cH^\opp_{h,\nu}(\Omega_\theta^1)$ is the completion of $\Omega^1_\theta$ with respect to the inner product $\scal{\omega}{\zeta}_{h,\nu}^\opp$. 
\end{definition}

\begin{remark}
 In the same way as with Lemma~\ref{lem:inner product-Hermitian} it can be shown that $\cH^\opp_{h,\nu}(\Omega_\theta^1)$ and its locally convex space structure do not depend on $\nu$ or $h$. 
\end{remark}

\begin{remark}
 When $g\in \GL^+_n(\cA_\theta^\R)$ we denote by $\scal{\cdot}{\cdot}_{g}^\opp$ the inner product $\scal{\cdot}{\cdot}_{g,\nu(g)}^\opp$. We shall also denote by $\cH_g(\Omega_\theta^1)$ the Hilbert space $\cH^\opp_{g,\nu(g)}(\Omega_\theta^1)$. 
\end{remark}

Let us now construct the divergence operator on $\Omega^1_\theta$. 

\begin{definition}
 The \emph{divergence} (with respect to $\nu$) of any $X=\sum X^{i} \dl_i\in \cX_\theta$ is 
 \begin{equation*}
 \dive_\nu(X) := \sum_{1\leq i \leq n} \dl_i \big( X^{i} \nu\big)\nu^{-1}. 
\end{equation*}
\end{definition}

\begin{proposition}
 The following holds. 
\begin{enumerate}
\item $\varphi_\nu( \dive_\nu(X))=0$ for all $X \in \cX_\theta$. 

\item Leibniz Rule: For all $u\in  \cA_\theta$ and $X= \sum_i X^{i} \dl_i \in \cX_\theta$, we have
\begin{equation*}
 \dive_\nu ( u X) = \nabla u \cdot X + u \dive_\nu(X), 
\end{equation*}
  where we have set $ \nabla u \cdot X= \sum_i \dl_i(u)X^{i}$. 
\end{enumerate}
\end{proposition}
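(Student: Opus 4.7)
The plan is to prove both statements by direct computation from the definitions, the only substantive inputs being the Leibniz rule for each canonical derivation $\dl_i$ and the closedness property~(\ref{eq:NCtori.closed-trace}) of $\tau$.

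For part~(1), I would unfold $\varphi_\nu(\dive_\nu(X))$ via the definition~(\ref{eq:densities.weight}) of $\varphi_\nu$ and the definition of $\dive_\nu$. Writing $X=\sum_i X^i \dl_i$, this gives
\begin{equation*}
 \varphi_\nu(\dive_\nu(X)) = (2\pi)^n \tau\bigl[\dive_\nu(X)\,\nu\bigr] = (2\pi)^n \sum_{i=1}^n \tau\bigl[\dl_i(X^i\nu)\,\nu^{-1}\nu\bigr],
\end{equation*}
so the $\nu^{-1}$ and $\nu$ collapse to the identity, leaving $(2\pi)^n \sum_i \tau[\dl_i(X^i \nu)]$. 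Since $X^i\nu\in\cA_\theta$ and $\tau\circ\dl_i=0$ by~(\ref{eq:NCtori.closed-trace}), each summand vanishes. This step requires no commutativity since the cancellation of $\nu^{-1}\nu$ is positioned correctly by the definition of $\dive_\nu$.

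For part~(2), I would compute $\dive_\nu(uX)$ using that the $i$-th component of $uX$ is $uX^i$, giving
\begin{equation*}
 \dive_\nu(uX) = \sum_{i=1}^n \dl_i(u X^i \nu)\nu^{-1}.
\end{equation*}
Applying the Leibniz rule for $\dl_i$ (which holds on products of three factors in the obvious way, since $\dl_i$ is a derivation of $\cA_\theta$) yields $\dl_i(u X^i \nu) = \dl_i(u) X^i \nu + u\,\dl_i(X^i \nu)$. Right-multiplying by $\nu^{-1}$ and summing over $i$ splits into two sums: in the first, $\nu\nu^{-1}$ cancels, leaving $\sum_i \dl_i(u) X^i = \nabla u \cdot X$; the second is exactly $u\sum_i \dl_i(X^i \nu)\nu^{-1}=u\dive_\nu(X)$. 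Combining the two gives the claimed Leibniz formula.

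There is no real obstacle; the content of the proof is simply that the definition of $\dive_\nu$ inserts the factors $\nu$ and $\nu^{-1}$ in precisely the positions needed for these algebraic cancellations to respect the noncommutativity of $\cA_\theta$. The whole argument is a few lines of bookkeeping once one uses~(\ref{eq:NCtori.closed-trace}) in part~(1) and the derivation property of $\dl_i$ in part~(2).
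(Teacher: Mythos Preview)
Your proposal is correct and follows essentially the same approach as the paper: both parts are proved by direct computation, using the cancellation of $\nu^{-1}\nu$ together with the closedness property~(\ref{eq:NCtori.closed-trace}) for part~(1), and the Leibniz rule for the derivations $\dl_i$ for part~(2).
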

\begin{proof}
 Let $X= \sum_i X^{i} \dl_i \in \cX_\theta $. By using~(\ref{eq:NCtori.closed-trace}) we get 
 \begin{equation*}
 \varphi_\nu( \dive_\nu(X))= \sum_{1\leq i \leq n} \tau\big[\dl_i\big( X^{i} \nu\big)\nu^{-1}\cdot \nu\big] = \sum_{1\leq i \leq n} \tau\big[\dl_i\big( X^{i} \nu\big)\big]=0. 
\end{equation*}
In addition, let $u\in \cA_\theta$. We have $\dive_\nu ( u X) = \sum_{i} \dl_i ( uX^{i} \nu)\nu^{-1}$. As $\dl_i ( uX^{i} \nu)\nu^{-1}= \dl_i(u) X^{i}+ u \dl_i(X^{i}\nu)\nu^{-1}$, we get
\begin{equation*}
 \dive_\nu ( u X) = \sum_{1\leq i \leq n} \dl_i(u) X^{i} +  \sum_{1\leq i \leq n}  u \dl_i(X^{i}\nu)\nu^{-1} = \nabla u \cdot X + u \dive_\nu(X).
\end{equation*}
The proof is complete. 
\end{proof}

\begin{definition}
 The divergence operator $\delta:\Omega_\theta^1 \rightarrow \cA_\theta$ is the linear operator defined by
 \begin{equation}
\delta\omega = \nu^{-1} \sum_{1\leq i,j \leq n} \dl_i \big(  \nu^{\frac12} h^{ij} \nu^{\frac12} \omega_j\big), \qquad \omega = \sum_{1\leq i \leq n} \theta^i \omega_i \in \Omega_\theta^1.
\label{eq:1-forms.div-op} 
\end{equation}
\end{definition}

The fact that $\delta$ is the analogue on $\Omega^1_\theta$ of the divergence operator on 1-forms on ordinary manifolds stems from the following result. 

\begin{proposition}\label{prop:1-forms.divergence}
The following holds. 
\begin{enumerate}
 \item The operator $-\delta$ is the formal adjoint of the differential $d:\cA_\theta \rightarrow \Omega_\theta^1$ with respect to the inner products $\scal{\cdot}{\cdot}_\nu^\opp$ and $\scal{\cdot}{\cdot}_{h,\nu}^\opp$. That is, we have 
 \begin{equation}
 \scal{-\delta\omega}{u}_\nu^\opp =  \scal{\omega}{du}_{h,\nu}^\opp \qquad \text{for all $\omega \in \Omega_\theta^1$ and $u\in \cA_\theta$}.
 \label{eq:1-forms.div-adjoint}
\end{equation}

 \item For all $\omega = \sum \theta^i \omega_i$ in $\Omega_\theta^1$, we have
\begin{equation*}
 \delta \omega = \big[ \dive_\nu(X_\omega^{h,\nu})\big]^*, \qquad \text{where $X^{h,\nu}_\omega= \sum_i \omega_j^*  \nu^{\frac12} h^{ji} \nu^{-\frac12}\dl_i$}. 
\end{equation*}
In particular, $ \delta \omega =[\dive_\nu (X_\omega^h)]^*$ when $[h,\nu]=0$.
\end{enumerate}
\end{proposition}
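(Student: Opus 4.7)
The plan is to prove both assertions by direct calculation from the defining formulas~(\ref{eq:div.inner product-forms-nu}) and~(\ref{eq:1-forms.div-op}), using three ingredients: (i) the integration-by-parts identity~(\ref{eq:NCtori.integration-by-parts}), $\tau[u\dl_i(v)]=-\tau[\dl_i(u)v]$; (ii) the compatibility~(\ref{eq:NCtori.derivation-involution}), $\dl_i(u)^*=\dl_i(u^*)$; and (iii) the fact that $h^{-1}=(h^{ij})$ is selfadjoint, hence $(h^{ij})^*=h^{ji}$.

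For part (1), I would first unpack the right-hand side. Writing $du=\sum_i\theta^i\dl_i(u)$, formula~(\ref{eq:div.inner product-forms-nu}) gives
\[ \scal{\omega}{du}_{h,\nu}^\opp = \sum_{1\leq i,j\leq n}\tau\bigl[\dl_i(u)^*\nu^{\frac12}h^{ij}\nu^{\frac12}\omega_j\bigr]. \]
Using (ii) to rewrite $\dl_i(u)^*=\dl_i(u^*)$ and then applying (i) shifts the derivation onto the remaining factor with a minus sign, producing
\[ \scal{\omega}{du}_{h,\nu}^\opp = -\sum_{1\leq i,j\leq n}\tau\bigl[u^*\dl_i\bigl(\nu^{\frac12}h^{ij}\nu^{\frac12}\omega_j\bigr)\bigr]. \]
On the other hand, $\scal{-\delta\omega}{u}_\nu^\opp=\tau[u^*\nu(-\delta\omega)]$ by the definition of $\scal{\cdot}{\cdot}_\nu^\opp$, and formula~(\ref{eq:1-forms.div-op}) yields $\nu\cdot\delta\omega=\sum_{i,j}\dl_i(\nu^{\frac12}h^{ij}\nu^{\frac12}\omega_j)$, so the two expressions coincide, establishing~(\ref{eq:1-forms.div-adjoint}).

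For part (2), I would substitute the stated coefficients $(X^{h,\nu}_\omega)^i=\sum_j\omega_j^*\nu^{\frac12}h^{ji}\nu^{-\frac12}$ into the definition $\dive_\nu(X)=\sum_i\dl_i(X^i\nu)\nu^{-1}$. The factors $\nu^{-\frac12}$ and $\nu$ fuse to $\nu^{\frac12}$, giving
\[ \dive_\nu\bigl(X^{h,\nu}_\omega\bigr) = \sum_{1\leq i,j\leq n}\dl_i\bigl(\omega_j^*\nu^{\frac12}h^{ji}\nu^{\frac12}\bigr)\nu^{-1}. \]
Taking the adjoint and invoking (ii) and (iii), the bracket becomes $\nu^{\frac12}h^{ij}\nu^{\frac12}\omega_j$ and the left $\nu^{-1}$ survives, which is precisely $\delta\omega$ from~(\ref{eq:1-forms.div-op}). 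For the final sentence, the hypothesis $[h,\nu]=0$ passes through the holomorphic functional calculus to yield $[h,\nu^{\pm\frac12}]=0$, so $\nu^{\frac12}h^{ji}\nu^{-\frac12}=h^{ji}$ and $X^{h,\nu}_\omega$ reduces to $X^h_\omega$ as given by~(\ref{eq:Riemannian.Xomg}).

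No single step looks difficult; the main obstacle is purely bookkeeping, namely keeping track of the transposition in $(h^{ij})^*=h^{ji}$ and the careful placement of the $\nu^{\pm\frac12}$ factors so that the chain of identities closes with the correct sign. The noncommutativity of $\cA_\theta$ means one cannot shuffle $\nu^{\frac12}$ past $h^{ij}$ or $\omega_j$ freely, which is exactly why the symmetric sandwich $\nu^{\frac12}h^{ij}\nu^{\frac12}$ appears throughout and survives all manipulations.
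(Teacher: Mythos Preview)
Your proposal is correct and follows essentially the same approach as the paper's own proof: both parts are handled by direct computation from~(\ref{eq:div.inner product-forms-nu}) and~(\ref{eq:1-forms.div-op}) using exactly the three ingredients you list, with the only cosmetic difference that in part~(2) the paper starts from $(\delta\omega)^*$ and identifies it as $\dive_\nu(X^{h,\nu}_\omega)$, whereas you start from $\dive_\nu(X^{h,\nu}_\omega)$ and take its adjoint.
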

\begin{proof}
 Let $\omega = \sum \theta^i \omega_i\in \Omega_\theta^1$ and $u \in \cA_\theta$. As $du= \sum \theta^i \dl_i(u)$, by using~(\ref{eq:div.inner product-forms-nu}) we get
\begin{equation*}
 \scal{\omega}{du}_{h,\nu}^\opp =  \sum_{1\leq i,j \leq n} \tau\big[ (\dl_i u)^*\nu^{\frac12} h^{ij} \nu^{\frac12} \omega_j\big]. 
\end{equation*}
 It follows from~(\ref{eq:NCtori.derivation-involution}) and~(\ref{eq:NCtori.integration-by-parts}) that we have
\begin{equation*}
\tau\big[ (\dl_i u)^*\nu^{\frac12} h^{ij} \nu^{\frac12} \omega_j\big] =  
  \tau\big[ \dl_i (u^*) \nu^{\frac12} h^{ij} \nu^{\frac12}  \omega_j\big]= - \tau\left[ u^* \dl_i \big(\nu^{\frac12} h^{ij} \nu^{\frac12} \omega_j\big) \right].
\end{equation*}
As by definition $\delta \omega= \nu^{-1}\sum_{i,j} \dl_i (\nu^{\frac12} h^{ij} \nu^{\frac12} \omega_j)$, we  see that, for all $u \in \cA_\theta$, we have 
\begin{equation*}
 \scal{\omega}{du}_{h,\nu}^\opp = - \sum_{1\leq i,j \leq n}  \tau\left[ u^* \dl_i \big(\nu^{\frac12} h^{ij} \nu^{\frac12} \omega_j\big) \right] =- \tau\big[ u^* \nu (\delta \omega)\big]= \scal{-\delta \omega}{u}^\opp_\nu.  
\end{equation*}
This shows that $-\delta$ is the formal adjoint of $d$. Note that the unicity of the formal adjoint is a consequence of the density of $\cA_\theta$ in $\cH^\opp_\nu$. 

We also observe that $(\delta \omega)^*$ is equal to 
\begin{align*}
  \sum_{1\leq i,j \leq n} \left[\dl_i \big(  \nu^{\frac12} h^{ij} \nu^{\frac12} \omega_j\big)\right]^* \nu^{-1} 
& =  \sum_{1\leq i,j \leq n} \dl_i \big( \omega_j^* \nu^{\frac12} (h^{ij})^* \nu^{\frac12} \big) \nu^{-1}\\
&  =  \sum_{1\leq i,j \leq n} \dl_i \big( \omega_j^* \nu^{\frac12} h^{ji}
  \nu^{-\frac12}\cdot \nu \big) \nu^{-1}.
\end{align*}
Thus, if we set $X^{h,\nu}_\omega= \sum_{i,j} \omega_j^*  \nu^{\frac12} h^{ji} \nu^{-\frac12}\dl_i$, then 
 $(\delta \omega)^* =\dive_\nu (X^{h,\nu}_\omega)$, i.e.,  $\delta \omega = [ \dive_\nu(X_\omega^{h,\nu})]^*$. Furthermore, if $[\nu,h]=0$, then by using~(\ref{eq:Riemannian.Xomg}) we get $X^{h,\nu}_\omega= \sum_{i,j} \omega_j^*  h^{ij} \dl_i=X^h_\omega$, and hence $ \delta \omega =[\dive_\nu (X_\omega^h)]^*$. The proof is complete.  
\end{proof}

\section{Laplace-Beltrami Operators}\label{sec:Laplace-Beltrami}
In this section, we define and study the main properties of Laplace-Beltrami operators associated with arbitrary Hermitian metrics and densities on
$\cA_\theta$. 

\subsection{Definition and examples} In what follows we let $h\in \GL_n^+(\cA_\theta)$ and $\nu \in \cA_\theta^{++}$.  We also let $\delta: \Omega^1_\theta \rightarrow \cA_\theta$ be the divergence operator~(\ref{eq:1-forms.div-op}) associated with $(h,\nu)$. 

\begin{definition}
The Laplace-Beltrami operator $\Delta_{h,\nu}:\cA_\theta \rightarrow \cA_\theta$ is defined by
\begin{equation}
 \Delta_{h,\nu}(u)=- \delta (du), \qquad u \in \cA_\theta. 
 \label{eq:Laplacian.definition}
\end{equation}
\end{definition}
 
\begin{remark}
 When $g\in \GL_n^+(\cA_\theta^\R)$ we shall denote  by $\Delta_g$ the operator $\Delta_{g,\nu(g)}$. 
 This is the Laplace-Beltrami operator associated with the Riemannian metric $g$.  
\end{remark}

Recall that in the terminology of~\cite{Co:CRAS80} a \emph{differential operator} of order $m$ is any linear operator $P:\cA_\theta \rightarrow \cA_\theta$ of the form, 
\begin{equation*}
 P= \sum_{|\alpha|\leq m} a_\alpha \dl^\alpha, \qquad a_\alpha \in \cA_\theta. 
\end{equation*}
The \emph{symbol} of $P$ is $\rho(\xi) := \sum_{|\alpha|\leq m} a_\alpha i^{|\alpha|} \xi^\alpha$, $\xi\in \R^n$. This is a polynomial map on $\R^n$ with coefficients in $\cA_\theta$ of degree $m$. The degree~$m$ component $\rho_m(\xi):=\sum_{|\alpha|=m}a_\alpha i^m \xi^\alpha$ is called the \emph{principal symbol} of $P$. 

We say that $P$ is \emph{elliptic} when the principal symbol $\rho_m(\xi)$ is invertible for all $\xi\in \R^n\setminus 0$. In this case $\rho_m(\xi)^{-1}\in C^\infty(\R^n\setminus 0; \cA_\theta)$ and $P$ admits a parametrix in the class of Connes' pseudodifferential operators on $\cA_\theta$ (see~\cite{Ba:CRAS88, Co:CRAS80, HLP:Part1, HLP:Part2}). We refer to these references for general properties of elliptic operators on noncommutative tori. 

\begin{proposition}\label{prop:Laplacian.positivity-ellipticity}
 The following holds. 
\begin{enumerate}
 \item We have
         \begin{equation*}
               \scal{\Delta_{h,\nu}u}{v}_\nu^\opp = \scal{du}{dv}_{h,\nu}^\opp, \qquad u,v\in \cA_\theta. 
         \end{equation*}
 \item For all $u\in \cA_\theta$, we have 
 \begin{align}
 \Delta_{h,\nu}u & = - \nu^{-1} \sum_{1\leq i,j \leq n} \dl_i \big( \sqrt{\nu} h^{ij} \sqrt{\nu} \dl_j(u)\big) \nonumber\\
  &= -\sum_{1\leq i,j \leq n} \nu^{-\frac{1}{2}} h^{ij} \nu^{\frac12} \dl_i \dl_j(u) - \nu^{-1} \sum_{1\leq i,j \leq n} \dl_i \big( \sqrt{\nu} h^{ij} \sqrt{\nu}\big) \dl_j(u). 
  \label{eq:Laplacian.formula}
\end{align}    

\item $\Delta_{h,\nu}$ is an elliptic 2nd order differential operator with principal symbol $\rho_2(\xi)= \nu^{-\frac{1}{2}} (\xi,\xi)_{h^{-1}}\nu^{\frac12}$.    
\end{enumerate}
\end{proposition}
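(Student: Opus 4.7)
The plan is to chain together Proposition~\ref{prop:1-forms.divergence}, the explicit divergence formula~(\ref{eq:1-forms.div-op}), and the positivity statement in Corollary~\ref{cor:Positivity.equivalence-free}(ii). None of the three parts requires any new machinery; everything has been set up beforehand.

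For (1), I would simply apply Proposition~\ref{prop:1-forms.divergence}(1) to the $1$-form $\omega = du$. Since $\Delta_{h,\nu}u = -\delta(du)$ by definition, this immediately gives
\begin{equation*}
\scal{\Delta_{h,\nu}u}{v}_\nu^\opp = \scal{-\delta(du)}{v}_\nu^\opp = \scal{du}{dv}_{h,\nu}^\opp
\end{equation*}
for all $u,v\in \cA_\theta$.

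For (2), I would substitute $du = \sum_j \theta^j\dl_j(u)$ into~(\ref{eq:1-forms.div-op}); the coefficient $\omega_j$ becomes $\dl_j(u)$, which produces the first line of the claimed formula. The second line is then obtained by applying the Leibniz rule to the outer derivation $\dl_i$, separating off the second-order piece $\sqrt{\nu}\,h^{ij}\sqrt{\nu}\,\dl_i\dl_j(u)$ from the first-order piece $\dl_i(\sqrt{\nu}\,h^{ij}\sqrt{\nu})\,\dl_j(u)$, and simplifying $\nu^{-1}\cdot\sqrt{\nu}=\nu^{-1/2}$ in the top-order term.

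For (3), reading off the expanded formula, $\Delta_{h,\nu}$ is a differential operator of order at most two in the sense recalled before the proposition, with second-order coefficients $-\nu^{-1/2}h^{ij}\nu^{1/2}\in\cA_\theta$. Since the symbol of $\dl_i\dl_j$ has leading term $(i\xi_i)(i\xi_j)=-\xi_i\xi_j$, the principal symbol is
\begin{equation*}
\rho_2(\xi) \;=\; \sum_{1\leq i,j\leq n}\nu^{-1/2}h^{ij}\nu^{1/2}\,\xi_i\xi_j \;=\; \nu^{-1/2}\Bigl(\sum_{1\leq i,j\leq n}\xi_i h^{ij}\xi_j\Bigr)\nu^{1/2} \;=\; \nu^{-1/2}\acoup{\xi}{\xi}_{h^{-1}}\nu^{1/2},
\end{equation*}
where I have used that the scalars $\xi_i,\xi_j\in\R$ may be pulled through the $\cA_\theta$-valued factors. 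To get ellipticity, I would note that $h^{-1}\in\GL_n^+(\cA_\theta)$ (as $h$ lies there), so by Corollary~\ref{cor:Positivity.equivalence-free}(ii) applied to the Hermitian metric $\acoup{\cdot}{\cdot}_{h^{-1}}$ on $\cA_\theta^n$ we have $\acoup{\xi}{\xi}_{h^{-1}}\in\cA_\theta^{++}$ for every $\xi\in\R^n\setminus 0$; in particular this element is invertible in $\cA_\theta$. Since $\nu^{\pm 1/2}\in\cA_\theta^{++}$, $\rho_2(\xi)$ is a product of three invertible elements and so lies in $\cA_\theta^{-1}$, which is the required ellipticity.

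There is no genuine obstacle: the proof is essentially bookkeeping. The only point that needs a little care is to resist the temptation to commute $h^{ij}$ past $\nu^{1/2}$ in the principal symbol, so that the modular conjugation $\nu^{-1/2}(\,\cdot\,)\nu^{1/2}$ remains visible in $\rho_2(\xi)$. This conjugation is precisely the imprint of the non-triviality of the modular automorphism group that was advertised in the introduction, and it is what guarantees that $-\delta\circ d$ ends up being a differential operator rather than a strictly larger kind of object.
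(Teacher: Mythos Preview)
Your proposal is correct and follows essentially the same approach as the paper: both use Proposition~\ref{prop:1-forms.divergence}(1) with $\omega=du$ for part~(1), plug $du=\sum_j\theta^j\dl_j(u)$ into~(\ref{eq:1-forms.div-op}) and expand via Leibniz for part~(2), and invoke Corollary~\ref{cor:Positivity.equivalence-free}(ii) to get invertibility of $(\xi,\xi)_{h^{-1}}$ for part~(3).
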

\begin{proof}
 Let $u,v\in \cA_\theta$. It follows from~(\ref{eq:1-forms.div-adjoint}) that
 \begin{equation}
 \scal{\Delta_{h,\nu}u}{v}_\nu^\opp = \scal{-\delta(du)}{v}_\nu^\opp = \scal{du}{dv}_{h,\nu}^\opp.
 \label{eq:Laplacian.positivity}
\end{equation}
Moreover, as $du = \sum \theta^i \dl_i(u)$, by using~(\ref{eq:1-forms.div-op}) we get 
\begin{equation*}
        \Delta_{h,\nu}u = -\delta (du)  = -\nu^{-1} \sum_{1\leq i,j \leq n} \dl_i \big( \sqrt{\nu} h^{ij} \sqrt{\nu} \dl_j(u) \big). 
\end{equation*}
As $ \dl_i ( \sqrt{\nu} h^{ij} \sqrt{\nu} \dl_j(u))=  \sqrt{\nu} h^{ij} \sqrt{\nu} \dl_i\dl_j(u) + \dl_i ( \sqrt{\nu} h^{ij} \sqrt{\nu}) \dl_j(u)$, we get
\begin{equation*}
  \Delta_{h,\nu}u =  -\sum_{1\leq i,j \leq n} \nu^{-\frac{1}{2}} h^{ij} \nu^{\frac12} \dl_i \dl_j(u) - \nu^{-1} \sum_{1\leq i,j \leq n} \dl_i \big( \sqrt{\nu} h^{ij} \sqrt{\nu}\big) \dl_j(u).
\end{equation*}
In particular, we see that $\Delta_{h,\nu}$ is a 2nd order differential operator with principal symbol, 
\begin{equation*}
   \rho_2(\xi)= \sum_{1\leq i,j \leq n} \nu^{-\frac{1}{2}} h^{ij} \nu^{\frac12} \xi_i \xi_j =  \nu^{-\frac{1}{2}} (\xi,\xi)_{h^{-1}}\nu^{\frac12}.  
\end{equation*}
Let $\xi \in \R^n\setminus 0$. As $h^{-1}\in \GL^+_n(\cA_\theta)$, we know by Corollary~\ref{cor:Positivity.equivalence-free} that  $(\xi,\xi)_{h^{-1}}$ is in $\cA_\theta^{++}$. In particular, it is invertible, and so $\rho_2(\xi)=\nu^{-\frac{1}{2}} (\xi,\xi)_{h^{-1}}\nu^{\frac12}$ is invertible as well. This shows that $\Delta_{h,\nu}$ is elliptic. The proof is complete. 
\end{proof}

\begin{remark}
 Let $g \in \GL_n^+(\cA_\theta^\R)$. When $\nu=\nu(g)$ the formulas~(\ref{eq:Laplacian.formula}) give
 \begin{equation}
 \Delta_g u=  - \nu(g)^{-1} \sum_{1\leq i,j \leq n} \dl_i \big( \sqrt{\nu(g)} g^{ij} \sqrt{\nu(g)} \dl_j(u)\big), \qquad u \in \cA_\theta.
 \label{eq:Laplacian.formula-vg}
\end{equation}
Assume further that $g$ is self-compatible. Its determinant belongs to the commutative unital $C^*$-algebra generated by its entries. Then $\sqrt{\nu(g)}$ belongs to this $C^*$-algebra as well, and so $[\sqrt{\nu(g)},g]=0$. This implies that $[\sqrt{\nu(g)},g^{-1}]=0$, i.e., $\sqrt{\nu(g)}$ commutes with the entries of $g^{-1}$. Thus, in this case we can rewrite~(\ref{eq:Laplacian.formula-vg}) in the form, 
\begin{equation}
  \Delta_g u =  \frac{-1}{\sqrt{\det (g)}} \sum_{1\leq i,j \leq n} \dl_i \big(  \sqrt{\det(g)} g^{ij} \dl_j(u)\big), \qquad u\in \cA_\theta.
  \label{eq:Laplacian.formula-g} 
\end{equation}
Here $\det(g)$ is given by~(\ref{eq:det.Leibniz}), since $g$ is self-compatible (\emph{cf}.~Corollary~\ref{cor:det.self-comp}). Therefore, we recover the standard formula for the Laplace-Beltrami operator associated with a Riemannian metric. 
\end{remark}

\begin{example}
 Let $g=\delta_{ij}$ be the Euclidean metric. Then $\det(g)=1$ and $g^{ij}=\delta^{ij}$. Thus, in this case~(\ref{eq:Laplacian.formula-g}) gives
\begin{equation*}
 \Delta_g  = - \sum_{1\leq i,j \leq n} \dl_i \delta^{ij} \dl_j=  - \sum_{1\leq i\leq n} \dl_i^2.
\end{equation*}
Therefore, we recover the usual Laplace operator $\Delta :=-\dl_1^2-\cdots - \dl_n^2$ on $\cA_\theta$. 
\end{example}

\begin{example}\label{ex:Laplacian.conformal-flat}
 Let $g_{ij}=k^{2} \delta_{ij}$, $k \in \cA_\theta^{++}$, be a conformal deformation of the Euclidean metric.Then $g^{ij}=k^{-2}\delta^{ij}$ and we know by Proposition~\ref{prop:det.properties} that $\det (g)=k^{2n}$. Thus, by using~(\ref{eq:Laplacian.formula-g}) we get
\begin{equation*}
 \Delta_g u = - k^{-n}  \sum_{1\leq i,j \leq n} \dl_j \big(k^{n} k^{-2}\delta^{ij} \dl_j(u)\big)= - k^{-n} \sum_{1\leq i \leq n} \dl_i \big(k^{n-2} \dl_i(u)\big). 
\end{equation*}
As $ \dl_i (k^{n-2} \dl_i(u))= k^{n-2}\dl_i^2(u) + \dl_i(k^{n-2})\dl_i(u)$, we obtain 
\begin{equation*}
  \Delta_g u = - k^{-2} \sum_{1\leq i \leq n} \dl_i^2(u)- \sum_{1\leq i \leq n} k^{-n}  \dl_i(k^{n-2}) \dl_i(u) = k^{-2} \Delta u- \sum_{1\leq i \leq n} k^{-n}  \dl_i(k^{n-2}) \dl_i(u).
\end{equation*}
Suppose that $n=2$. In this case $k^{n-2}=1$, and so $\dl_i(k^{n-2})=0$ for $i=1,\ldots, n$. Thus, we get
\begin{equation*}
 \Delta_g = k^{-2} \Delta. 
\end{equation*}
Here $\nu(g)=k^2$, and so the left-multiplication by $k$ gives rise to a unitary isomorphism from $\cH_{g}^\opp$ onto $\cH_\theta$. Therefore, $ \Delta_g $ is unitary equivalent to the operator, 
\begin{equation*}
 k \Delta_g k^{-1}= k^{-1} \Delta k^{-1}.  
\end{equation*}
Thus, up to unitary equivalence, we recover the conformally deformed Laplacian $k^{-1} \Delta k^{-1}$ of Connes-Tretkoff~\cite{CT:Baltimore11}.  This operator was also considered in several subsequent papers (see, e.g., \cite{CM:JAMS14, DGK:arXiv18, Fa:JMP15, FK:JNCG12, FK:LMP13,  FK:JNCG13, FK:JNCG15, FGK:JNCG19, LM:GAFA16, Liu:arXiv18a, Liu:arXiv18b}). 
\end{example}

\begin{example}
 Suppose that $n=2$ and $g$ is a product metric of the form, 
 \begin{equation*}
 g= \begin{pmatrix}
 (k_1)^2 & 0\\
 0 & (k_2)^2
\end{pmatrix}, \qquad k_j \in \cA_\theta^{++}. 
\end{equation*}
We have 
\begin{equation*}
 g^{-1}= \begin{pmatrix}
 k_1^{-2} & 0\\
 0 & k_2^{-2}
\end{pmatrix}. 
\end{equation*}
Assume further that $[k_1,k_2]=0$. In this case $\det(g)=(k_1)^2(k_2)^2=(k_1k_2)^2$. Thus, by using~(\ref{eq:Laplacian.formula-g}) we get
\begin{align*}
- \Delta_g u & = \frac{1}{k_1 k_2} \sum_{1\leq i,j\leq 2} \dl_i \big( k_1k_2 g^{ij} \dl_j(u)\big) \\
& = \frac{1}{k_1 k_2} \left( \dl_1 \big[ k_2k_1^{-1}\dl_1(u) \big] + \dl_2 \big[ k_1k_2^{-1}\dl_2(u) \big]\right)\\ 
& = k_1^{-2} \dl_1^2(u) + k_2^{-2} \dl_2^2(u) + \frac{1}{k_1 k_2} \left( \dl_1 \big[ k_2k_1^{-1}\big]\dl_1(u) +  \dl_2 \big[ k_1k_2^{-1}\big]\dl_2(u) \right). 
\end{align*}
\end{example}

\begin{proposition}\label{prop:Laplacian.conformal-transformation} 
 Let $g\in \GL_n^+(\cA_\theta^\R)$ and $k\in \cA_\theta^{++}$ be such that $[k,g]=0$. Set $\hat{g}=k gk=k^2g$. Then 
 \begin{equation*}
\Delta_{\hat{g}}u = k^{-2} \Delta_gu - \nu(g)^{-1}k^{-n} \acoup{\sqrt{\nu(g)} du}{\sqrt{\nu(g)} d(k^{n-2})}_{g^{-1}}, \qquad u\in \cA_\theta.
\end{equation*}
In particular, when $n=2$ we have 
\begin{equation}
 \Delta_{\hat{g}} = k^{-2} \Delta_g.
 \label{eq:Laplacian.conformal-transformation-2D} 
\end{equation}
\end{proposition}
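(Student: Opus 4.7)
The plan is to compute $\Delta_{\hat g}u$ directly from the explicit formula~(\ref{eq:Laplacian.formula}) and then to regroup the resulting terms. First I will exploit the assumption $[k,g]=0$ to move powers of $k$ freely past everything in sight: taking inverses gives $[k,g^{-1}]=0$, hence $\hat g^{ij}=k^{-2}g^{ij}$; Proposition~\ref{prop:volume.conformal-transformation} yields $\nu(\hat g)=k^n\nu(g)$; and since $k$ lies in the commutative unital $C^*$-subalgebra generated by $k$ and the entries of $g$, Lemma~\ref{lem:det.det-compatible} together with holomorphic functional calculus give $[k,\sqrt{\nu(g)}]=0$. Consequently $\sqrt{\nu(\hat g)}=k^{n/2}\sqrt{\nu(g)}$ and
\begin{equation*}
\sqrt{\nu(\hat g)}\,\hat g^{ij}\,\sqrt{\nu(\hat g)}\;=\;k^{n-2}\sqrt{\nu(g)}\,g^{ij}\,\sqrt{\nu(g)}.
\end{equation*}

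Next I apply the Leibniz rule to the derivative appearing in~(\ref{eq:Laplacian.formula}):
\begin{equation*}
\dl_i\!\big(k^{n-2}\sqrt{\nu(g)}g^{ij}\sqrt{\nu(g)}\dl_j u\big)
= \dl_i(k^{n-2})\sqrt{\nu(g)}g^{ij}\sqrt{\nu(g)}\dl_j u
+ k^{n-2}\dl_i\!\big(\sqrt{\nu(g)}g^{ij}\sqrt{\nu(g)}\dl_j u\big).
\end{equation*}
Multiplying by $-\nu(\hat g)^{-1}=-k^{-n}\nu(g)^{-1}$ and summing over $i,j$ splits $\Delta_{\hat g}u$ into two pieces. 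In the second (``principal'') piece, the inner sum equals $-\nu(g)\Delta_g u$ by~(\ref{eq:Laplacian.formula}), and using that $k$ commutes with $\nu(g)$ I obtain
\begin{equation*}
-\,k^{-n}\nu(g)^{-1}k^{n-2}\cdot\big(-\nu(g)\Delta_g u\big)\;=\;k^{-2}\Delta_g u.
\end{equation*}

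For the first (``correction'') piece, I will recognise it as an inner product of $1$-forms. Since $g$ is Riemannian, $g^{-1}$ has selfadjoint entries and is symmetric, so $(g^{-1})^t=g^{-1}$ and formula~(\ref{eq:Riemannian.dual-metric-g}) reads $\acoup{\omega}{\zeta}_{g^{-1}}=\sum_{i,j}\zeta_i^*g^{ij}\omega_j$. Applying this to $\omega=\sqrt{\nu(g)}du$ and $\zeta=\sqrt{\nu(g)}d(k^{n-2})$, and using the selfadjointness of $k^{n-2}$ and of $\sqrt{\nu(g)}$, a direct computation gives
\begin{equation*}
\acoup{\sqrt{\nu(g)}\,du}{\sqrt{\nu(g)}\,d(k^{n-2})}_{g^{-1}}
= \sum_{1\leq i,j\leq n}\dl_i(k^{n-2})\sqrt{\nu(g)}\,g^{ij}\sqrt{\nu(g)}\,\dl_j(u),
\end{equation*}
which is exactly $-1$ times the correction piece after multiplication by $\nu(g)^{-1}k^{-n}$. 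Assembling the two contributions yields the stated identity, and when $n=2$ one has $k^{n-2}=1$, so $d(k^{n-2})=0$ by Proposition~\ref{prop:1-forms.properties-differential}, which immediately gives $\Delta_{\hat g}=k^{-2}\Delta_g$.

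The proof is essentially a bookkeeping calculation, and the only real obstacle is ensuring every commutation invoked is legitimate — most critically, that $k$ and $\sqrt{\nu(g)}$ commute with the entries $g^{ij}$ of $g^{-1}$ and with $\nu(g)$ itself. All of these follow from $[k,g]=0$ together with Lemma~\ref{lem:compatible-matrices} and Lemma~\ref{lem:det.det-compatible} applied to the relevant holomorphic functions $z\mapsto z^{-1}$, $z\mapsto \log z$, and $z\mapsto\sqrt{z}$, so no deeper input is needed.
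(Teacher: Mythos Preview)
Your proof is correct and follows essentially the same route as the paper's own argument: compute $\hat g^{ij}=k^{-2}g^{ij}$ and $\nu(\hat g)=k^n\nu(g)$, insert into~(\ref{eq:Laplacian.formula}), apply the Leibniz rule to peel off $\dl_i(k^{n-2})$, and identify the two resulting sums as $k^{-2}\Delta_g u$ and the $\acoup{\cdot}{\cdot}_{g^{-1}}$ correction term. One small wording slip: the unital $C^*$-subalgebra generated by $k$ and the entries of $g$ need not be commutative (the $g_{ij}$ may fail to commute among themselves), but your conclusion $[k,\sqrt{\nu(g)}]=0$ is still valid via the route you also indicate, namely Lemma~\ref{lem:compatible-matrices} and Lemma~\ref{lem:det.det-compatible}.
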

\begin{proof}
 Let $u\in \cA_\theta$. As $[k,g]=0$ we have $\hat{g}^{-1}=k^{-2}g^{-1}$, i.e., $\hat{g}^{ij}=k^{-2}g^{ij}$. We also know by Proposition~\ref{prop:volume.conformal-transformation} that $\nu(\hat{g})=k^{n}\nu(g)=\nu(g)k^{n}$, and so $\sqrt{\nu(g)}= k^{\frac{n}2} \sqrt{\nu(g)}=\sqrt{\nu(g)}k^{\frac{n}2}$. Combining this with~(\ref{eq:Laplacian.formula-vg}) then gives
\begin{align*}
 \Delta_{\hat{g}}u =  & -k^{-n}\nu(g)^{-1} \sum_{1\leq i,j\leq n} \dl_i\big[ \sqrt{\nu(g)} k^{\frac{n}2} k^{-2} g^{ij} k^{\frac{n}2} \sqrt{\nu(g)}\dl_j(u)\big]\\
 = &  -k^{-n}\nu(g)^{-1} \sum_{1\leq i,j\leq n} \dl_i\big[k^{n-2} \sqrt{\nu(g)}  g^{ij} \sqrt{\nu(g)}\dl_j(u)\big]\\
 = & -k^{-2}\nu(g)^{-1} \sum_{1\leq i,j\leq n} \dl_i\big[\sqrt{\nu(g)}  g^{ij} \sqrt{\nu(g)}\dl_j(u)\big]\\
 & - k^{-n}\nu(g)^{-1} \sum_{1\leq i,j\leq n} \dl_i\big(k^{n-2}\big) \sqrt{\nu(g)}  g^{ij} \sqrt{\nu(g)}\dl_j(u)\\
 = & k^{-2} \Delta_g(u) - \nu(g)^{-1}k^{-n} \acoup{\sqrt{\nu(g)} du}{\sqrt{\nu(g)} d(k^{n-2})}_{g^{-1}}. 
\end{align*}
When $n=2$, in the same way as with Example~\ref{ex:Laplacian.conformal-flat}, we have $d(k^{n-2})=d(1)=0$, and so we simply get $ \Delta_{\hat{g}}u=k^{-2} \Delta_gu$. This completes the proof. 
\end{proof}

\begin{remark}
 Example~\ref{ex:Laplacian.conformal-flat} is a special case of Proposition~\ref{prop:Laplacian.conformal-transformation} with $g_{ij}=\delta_{ij}$.
 \end{remark}

\begin{remark}
 The transformation law~(\ref{eq:Laplacian.conformal-transformation-2D}) is the version for noncommutative 2-tori of the conformal covariance of the Laplace-Beltrami operator on Riemann surfaces. 
\end{remark}
 
 \subsection{Elliptic regularity properties} 
 Let $\cA_\theta'$ be the topological dual of $\cA_\theta$. We equip it with its strong dual topology. We think of $\cA_\theta'$ as the space of distributions on $\cA_\theta$. We note there is a natural inclusion of $\cA_\theta$ into $\cA_\theta'$ induced by the natural pairing, 
\begin{equation}
 \acou{v}{u}:= \tau\big[ vu\big], \qquad u,v\in \cA_\theta.
 \label{eq:Laplacian.pairingAA}
\end{equation}
 More precisely, $\acou{v}{\cdot}\in \cA_\theta'$ and $v \rightarrow \acou{v}{\cdot}$ is a continuous embedding from $\cA_\theta$ into $\cA_\theta'$. This allows us to identify $\cA_\theta$ with its image in $\cA_\theta'$ under this embedding. Note that every $v\in \cA_\theta'$ is sum of its Fourier series in $\cA_\theta'$. Namely, 
\begin{equation}
 v = \sum_{k\in \Z^n} v_k U^k, \qquad v_k:=\acou{v}{(U^k)^*},
 \label{eq:Laplacian.Fourier-series-A'}
\end{equation}
 where the series converges in $\cA_\theta'$  (see, e.g., \cite{HLP:Part2, XXY:MAMS18}). In particular, this implies that $\cA_\theta$ is dense in $\cA_\theta'$. In addition, every differential operator $P:\cA_\theta \rightarrow \cA_\theta$ uniquely extends to a continuous linear operator $P:\cA_\theta'\rightarrow \cA_\theta'$ (see, e.g., \cite{HLP:Part2}). In particular, for any $\alpha \in \N_0^n$, we have 
 \begin{equation*}
 \acou{\dl^\alpha v}{u}=(-1)^{|\alpha|} \acou{v}{\dl^\alpha u}, \qquad v\in \cA_\theta', \ u \in \cA_\theta. 
\end{equation*}

Given any $s\in \R$ the Sobolev space $\cH_\theta^{(s)}$ is defined by 
\begin{equation*}
 \cH^{(s)}_\theta = \biggl\{v \in \cA_\theta'; \ \sum_{k\in \Z^n} (1+|k|^2)^{s}|v_k|^2<\infty \biggr\}.  
\end{equation*}
This is a Hilbert space with respect to the Hilbert norm, 
\begin{equation*}
 \|v\|_{s}= \biggl(\sum_{k\in \Z^n} (1+|k|^2)^{s}|v_k|^2\biggr)^{\frac12}, \qquad v\in \cH_\theta^{(s)}. 
\end{equation*}
For $s=0$ we recover the Hilbert space $\cH_\theta$. More generally, given any $m\in \N$, we have 
\begin{equation*}
 \cH^{(m)}_\theta = \bigg\{u \in \cH_\theta; \ \dl^\alpha u \in \cH_\theta \ \text{for $|\alpha|\leq m$} \biggr\}. 
\end{equation*}

Let $s\in \R$. If $v\in \cH_\theta^{(s)}$, then its Fourier series~(\ref{eq:Laplacian.Fourier-series-A'}) actually converges in $\cH_\theta^{(s)}$. This shows that $\cA_\theta$ is a dense subspace of $\cH_\theta^{(s)}$. In addition, the natural pairing~(\ref{eq:Laplacian.pairingAA}) on $\cA_\theta\times \cA_\theta$ uniquely extends to a continuous bilinear pairing on $\cH_\theta^{(-s)}\times \cH_\theta^{(s)}$. This pairing is non-degenerate, and so we have a canonical isomorphism $(\cH_\theta^{(s)})'\simeq \cH_\theta^{(-s)}$. 

We also have an analogue of Sobolev's embedding theorem. If $t>s$, then  the inclusion $\cH^{(t)}_\theta \subset \cH_\theta^{(s)}$ is compact. Moreover, we have  
\begin{equation*}
 \cA_\theta = \bigcap_{s\in \R} \cH_\theta^{(s)} \qquad \text{and} \qquad \bigcup_{s\in \R} \cH_\theta^{(s)}= \cA_\theta'. 
\end{equation*}
In fact, at the topological level, the Sobolev norms $\|\cdot\|_s$, $s\in \R$, generate the topology of $\cA_\theta$. Moreover, the strong topology of $\cA_\theta'$ is the direct limit of the  $\|\cdot\|_s$ topologies. 

We refer to~\cite{HLP:Part2, Sp:Padova92, XXY:MAMS18} for more complete accounts on Sobolev spaces on noncommutative tori, including proofs for the results mentioned above. 

Let $h\in \GL_m^+(\cA_\theta)$ and $\nu \in \cA_\theta^{++}$. As the Laplace-Beltrami operator $\Delta_{h,\nu}$ is a differential operator, it uniquely extends to a continuous linear operator $\Delta_{h,\nu}:\cA_\theta'\rightarrow \cA_\theta'$. In fact,  by Proposition~\ref{prop:Laplacian.positivity-ellipticity} this is an \emph{elliptic} 2nd order operator, and so the elliptic regularity results for such operators in~\cite{HLP:Part2} apply. We summarize them in the following statement. 

\begin{proposition}\label{prop:Laplacian.elliptic-regularity}
 The following holds. 
\begin{enumerate}
 \item For every $s\in \R$, the operator $\Delta_{h,\nu}$ uniquely extends to a continuous linear operator $\Delta_{h,\nu}:\cH_\theta^{(s+2)}\rightarrow \cH_\theta^{(s)}$. 
 
 \item Let $s\in \R$. For any $u\in \cA_\theta'$, we have
\begin{equation}
 \Delta_{h,\nu}u \in \cH_\theta^{(s)} \Longleftrightarrow u \in \cH_\theta^{(s+2)}.
 \label{eq:Laplacian.elliptic-regularity-Sobolev}  
\end{equation}

\item The operator $\Delta_{h,\nu}$ is hypoelliptic, i.e., for any $u \in \cA_\theta'$, we have 
\begin{equation*}
    \Delta_{h,\nu}u \in \cA_\theta  \Longleftrightarrow u \in \cA_\theta.
\end{equation*}
\end{enumerate}
\end{proposition}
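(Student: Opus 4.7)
The plan is to reduce all three claims to the elliptic regularity theorem for Connes' pseudodifferential calculus on noncommutative tori proved in~\cite{HLP:Part2}, together with Proposition~\ref{prop:Laplacian.positivity-ellipticity}, which says that $\Delta_{h,\nu}$ is a second-order elliptic differential operator. Apart from citing the parametrix construction for elliptic $\Psi$DOs on $\cA_\theta$, there is essentially no work to do beyond bookkeeping.

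For~(1), I would start from the explicit expression~(\ref{eq:Laplacian.formula}), which writes $\Delta_{h,\nu}=\sum_{|\alpha|\leq 2}a_\alpha \dl^\alpha$ with $a_\alpha\in\cA_\theta$. Two standard facts about Sobolev spaces on $\cA_\theta$ (as recalled in the preamble to the proposition and detailed in~\cite{HLP:Part2, XXY:MAMS18}) then suffice: (i) each derivation $\dl^\alpha$ extends to a continuous map $\cH_\theta^{(s+|\alpha|)}\to \cH_\theta^{(s)}$, which is immediate from $(\dl^\alpha u)_k=(ik)^\alpha u_k$ in Fourier coefficients; and (ii) left-multiplication by any element of $\cA_\theta$ extends to a bounded operator on every $\cH_\theta^{(s)}$. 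Combining these gives the desired continuous extension $\Delta_{h,\nu}:\cH_\theta^{(s+2)}\to \cH_\theta^{(s)}$, and uniqueness follows from the density of $\cA_\theta$ in $\cH_\theta^{(s+2)}$.

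For~(2), the implication ``$\Leftarrow$'' is immediate from~(1). The forward implication is the core elliptic regularity statement. Since $\Delta_{h,\nu}$ is elliptic of order~$2$, the pseudodifferential calculus of~\cite{Co:CRAS80, HLP:Part1, HLP:Part2} supplies a parametrix $Q$ of order $-2$ and a smoothing operator $R$ such that $Q\Delta_{h,\nu}=1+R$ as continuous operators on $\cA_\theta'$. Given $u\in\cA_\theta'$ with $\Delta_{h,\nu}u\in\cH_\theta^{(s)}$, the identity $u=Q\Delta_{h,\nu}u-Ru$ then places $u$ in $\cH_\theta^{(s+2)}$, since $Q:\cH_\theta^{(s)}\to \cH_\theta^{(s+2)}$ is bounded and $R:\cA_\theta'\to\cA_\theta\subset \cH_\theta^{(s+2)}$.

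Part~(3) is an immediate consequence of~(2) and the characterization $\cA_\theta=\bigcap_{s\in\R}\cH_\theta^{(s)}$: if $\Delta_{h,\nu}u\in\cA_\theta$, then $\Delta_{h,\nu}u\in\cH_\theta^{(s)}$ for every $s\in\R$, so by~(2) $u\in\cH_\theta^{(s+2)}$ for every $s\in\R$, hence $u\in\cA_\theta$; the converse is trivial because $\Delta_{h,\nu}$ preserves $\cA_\theta$. The only nontrivial ingredient in the whole argument is the existence of the parametrix~$Q$, which I would simply quote from~\cite{HLP:Part2}; this is where the main technical content of the statement resides.
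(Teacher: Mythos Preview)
Your proposal is correct and matches the paper's approach exactly: the paper does not give a proof of this proposition at all, but simply remarks that $\Delta_{h,\nu}$ is an elliptic second-order differential operator by Proposition~\ref{prop:Laplacian.positivity-ellipticity} and then quotes the elliptic regularity results of~\cite{HLP:Part2}. Your write-up in fact supplies more detail than the paper does, spelling out the parametrix argument and the Sobolev bookkeeping that underlie the cited results.
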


\subsection{Spectral theory} Let us now turn to the spectral theory of $\Delta_{h,\nu}$. As $\cH_\nu^\opp$ is the same locally convex space as $\cH_\theta=\cH_\theta^{(0)}$, the pairing~(\ref{eq:Laplacian.pairingAA}) gives rise to a non-degenerate pairing on $\cH_\nu^\opp$. This pairing is related to the inner product $\scal{\cdot}{\cdot}_\nu^\opp$ as follows. Given any $u,v\in \cA_\theta$, we have 
\begin{equation*}
 \acou{u}{v}= \tau[vu]= \tau\big[ (\nu^{-1}v^*)^*\nu u\big] = \scal{u}{\nu^{-1}v^*}_\nu^\opp. 
\end{equation*}
Combining this with the density of $\cA_\theta$ in $\cH_\nu^\opp$, we deduce that
\begin{equation}
 \acou{u}{v}=  \scal{u}{\nu^{-1}v^*}_\nu^\opp \qquad \text{for all $u,v\in \cH_\nu^\opp$}.
 \label{eq:Laplacian.pairing-inner-nu-opp} 
\end{equation}

Bearing this in mind, by Proposition \ref{prop:Laplacian.elliptic-regularity} the operator $\Delta_{h,\nu}$ maps continuously $\cH_\theta^{(2)}$ to $\cH_\nu^\opp$ and $\cH_\theta^{(2)}$ is a maximal domain for $\Delta_{h,\nu}$ on $\cH_\nu^\opp$. We actually have the following result. 

\begin{proposition}\label{prop:Laplacian.spectral-theory}
 The following holds. 
\begin{enumerate}
\item $\Delta_{h,\nu}$ is selfadjoint  when we regard it as an operator on $\cH_\nu^\opp$ with domain $\cH_\theta^{(2)}$. 

\item The spectrum of $\Delta_{h,\nu}$ is an unbounded discrete subset of $[0,\infty)$ that consists of eigenvalues with finite multiplicity. 

\item Every eigenspace of $\Delta_{h,\nu}$ is a finite dimensional subspace of $\cA_\theta$. 
\end{enumerate}
\end{proposition}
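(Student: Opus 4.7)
The plan is to reduce parts (i)--(iii) to the spectral theorem for a compact positive selfadjoint operator via a Lax--Milgram argument combined with elliptic regularity. Symmetry and nonnegativity of $\Delta_{h,\nu}$ on $\cA_\theta$ are immediate from Proposition~\ref{prop:Laplacian.positivity-ellipticity}(i): one has
\begin{equation*}
    \scal{\Delta_{h,\nu} u}{v}_\nu^{\opp} = \scal{du}{dv}_{h,\nu}^{\opp} = \overline{\scal{dv}{du}_{h,\nu}^{\opp}} = \scal{u}{\Delta_{h,\nu} v}_\nu^{\opp},
\end{equation*}
together with $\scal{\Delta_{h,\nu} u}{u}_\nu^{\opp} = \scal{du}{du}_{h,\nu}^{\opp} \geq 0$. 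Both extend to $\cH_\theta^{(2)}$ by density of $\cA_\theta$ and the continuity of $\Delta_{h,\nu}:\cH_\theta^{(2)}\to \cH_\theta^{(0)}\simeq \cH_\nu^{\opp}$ provided by Proposition~\ref{prop:Laplacian.elliptic-regularity}(i).

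The core step is to show that $\Delta_{h,\nu} + I: \cH_\theta^{(2)} \to \cH_\nu^{\opp}$ is a bijection. For this, I would introduce the sesquilinear form
\begin{equation*}
    Q(u,v) := \scal{du}{dv}_{h,\nu}^{\opp} + \scal{u}{v}_\nu^{\opp}
\end{equation*}
on $\cH_\theta^{(1)}$. Continuity of $Q$ is clear from the explicit formula~(\ref{eq:div.inner product-forms-nu}); for coercivity, one writes $\scal{du}{du}_{h,\nu}^{\opp} = \scal{h^{-1}\psi}{\psi}_{\cH_\theta^n}$ with $\psi_i = \nu^{\frac12}\dl_i u$ and applies the lower bound $h^{-1}\geq c_0>0$ obtained by combining $h^{-1}\in\GL_n^+(\cA_\theta)$ with Corollary~\ref{cor:Positivity.equivalence-free}, together with the equivalence of $\|\cdot\|_\nu^{\opp}$ and $\|\cdot\|_\theta$ that follows from Lemma~\ref{lem:densities.weight-tau}. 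Lax--Milgram then furnishes, for each $f \in \cH_\nu^{\opp}$, a unique $u \in \cH_\theta^{(1)}$ with $Q(u,v) = \scal{f}{v}_\nu^{\opp}$ for all $v$. Testing against $v \in \cA_\theta$ and using Proposition~\ref{prop:1-forms.divergence}(i) to rewrite $\scal{du}{dv}_{h,\nu}^{\opp} = \scal{u}{\Delta_{h,\nu} v}_\nu^{\opp}$ (valid for $u\in \cH_\theta^{(1)}$ by continuity), the identity becomes $\scal{u}{(\Delta_{h,\nu}+1)v}_\nu^{\opp} = \scal{f}{v}_\nu^{\opp}$ for all $v\in \cA_\theta$; via the identification~(\ref{eq:Laplacian.pairing-inner-nu-opp}) and the already-established symmetry of $\Delta_{h,\nu}+1$, this translates into the distributional equation $(\Delta_{h,\nu}+1)u = f$ in $\cA_\theta'$. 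Elliptic regularity (Proposition~\ref{prop:Laplacian.elliptic-regularity}(ii)) then promotes $u$ to $\cH_\theta^{(2)}$. Injectivity on $\cH_\theta^{(2)}$ is immediate from $Q(u,u)\geq \scal{u}{u}_\nu^{\opp}$.

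Consequently $T := (\Delta_{h,\nu}+I)^{-1}: \cH_\nu^{\opp} \to \cH_\theta^{(2)}$ is bounded, and composing with the compact Rellich inclusion $\cH_\theta^{(2)} \hookrightarrow \cH_\theta^{(0)} \simeq \cH_\nu^{\opp}$ yields a compact operator on $\cH_\nu^{\opp}$. The symmetry of $\Delta_{h,\nu}$ transfers to $T$, which, being bounded and symmetric, is selfadjoint; consequently $\Delta_{h,\nu}$ is selfadjoint with domain $T(\cH_\nu^{\opp}) = \cH_\theta^{(2)}$, giving~(i). The spectral theorem applied to the compact positive selfadjoint operator $T$ produces eigenvalues $\mu_j>0$ with $\mu_j\to 0$, each of finite multiplicity; correspondingly $\Delta_{h,\nu}$ has eigenvalues $\lambda_j=\mu_j^{-1}-1$ tending to $+\infty$, nonnegative by the first paragraph, with the same finite-dimensional eigenspaces, proving~(ii). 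Finally, any eigenvector $u\in\cH_\theta^{(2)}$ with $\Delta_{h,\nu} u=\lambda u\in\cH_\theta^{(2)}$ bootstraps via Proposition~\ref{prop:Laplacian.elliptic-regularity}(ii) through $\cH_\theta^{(2k)}$ for every $k$, landing in $\bigcap_s \cH_\theta^{(s)} = \cA_\theta$, yielding~(iii).

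The main obstacle is the coercivity estimate for $Q$ together with the subsequent distributional identification after Lax--Milgram. Coercivity requires translating the matrix-level lower bound on $h^{-1}$ in $M_n(\cA_\theta)$ into a scalar Sobolev estimate after weighting by $\nu^{\frac12}$ on both sides; the distributional step requires checking that the $\scal{\cdot}{\cdot}_\nu^{\opp}$-identity against smooth test functions matches the natural $\tau$-pairing action of $\Delta_{h,\nu}+1$ on $\cA_\theta'$, which relies on both the pairing formula~(\ref{eq:Laplacian.pairing-inner-nu-opp}) and the symmetry of $\Delta_{h,\nu}+1$ to eliminate the modular twist built into $\scal{\cdot}{\cdot}_{h,\nu}^{\opp}$.
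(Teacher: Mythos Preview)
Your proof is correct but follows a genuinely different route from the paper's. The paper establishes selfadjointness directly by showing $\dom(\Delta_{h,\nu}^*)\subset \cH_\theta^{(2)}$: given $u\in\dom(\Delta_{h,\nu}^*)$, it approximates $u$ by smooth elements, uses the continuity of $\Delta_{h,\nu}:\cH_\theta^{(0)}\to\cH_\theta^{(-2)}$ to identify $\Delta_{h,\nu}^*u$ with the distributional $\Delta_{h,\nu}u$, and then invokes elliptic regularity to conclude $u\in\cH_\theta^{(2)}$. For parts~(ii) and~(iii) the paper simply cites the spectral results for elliptic operators from~\cite{HLP:Part2}, using selfadjointness only to ensure $\Sp(\Delta_{h,\nu})\neq\C$; nonnegativity then follows from~(\ref{eq:Laplacian.positivity}). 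Your approach instead builds the resolvent $(\Delta_{h,\nu}+I)^{-1}$ by Lax--Milgram on $\cH_\theta^{(1)}$, upgrades via elliptic regularity, and then reads off selfadjointness and the spectral picture from the compact selfadjoint operator $T$. This is more self-contained, since you never need the black-box spectral input from~\cite{HLP:Part2}, and it gives compactness of the resolvent explicitly; the cost is the extra work you correctly flag, namely the coercivity estimate and the identification of the Lax--Milgram solution with the distributional one through the pairing~(\ref{eq:Laplacian.pairing-inner-nu-opp}). The paper's argument is shorter precisely because it trades that analysis for the external reference.
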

\begin{proof}
 Let us first show that $\Delta_{h,\nu}$ is selfadjoint. We regard it as an operator on $\cH_\nu^\opp$ with domain $\cH_\theta^{(2)}$. It follows from~(\ref{eq:Laplacian.positivity}) that $\Delta_{h,\nu}$ is formally selfadjoint with respect to the inner product $\scal{\cdot}{\cdot}_\nu^\opp$. Namely, for all $u,v\in \cA_\theta$, we have 
\begin{equation}
 \scal{\Delta_{h,\nu} u}{v}_\nu^\opp=  \scal{du}{dv}_{h,\nu}^\opp=\overline{\scal{dv}{du}_{h,\nu}^\opp}=\overline{\scal{\Delta_{h,\nu} v}{u}_\nu^\opp}=
 \scal{u}{\Delta_{h,\nu} v}_\nu^\opp. 
 \label{eq:Laplacian.selfadjoint}
\end{equation}
Combining this with the density of $\cA_\theta$ in $\cH_\theta^{(2)}$ and the fact that $\Delta_{h,\nu}$ maps continuously $\cH_\theta^{(2)}$ to $\cH_\nu^\opp$, we deduce that
\begin{equation*}
 \scal{\Delta_{h,\nu} u}{v}_\nu^\opp=\scal{u}{\Delta_{h,\nu} v}_\nu^\opp \qquad \text{for all $u,v\in \cH_\theta^{(2)}$}. 
\end{equation*}
That is, $\Delta_{h,\nu}$ is symmetric, i.e., $\Delta_{h,\nu} \subset \Delta_{h,\nu}^*$. 

Let $u \in \dom(\Delta_{h,\nu}^*)$. In addition, let $(u_\ell)_{\ell\geq 0}\subset \cA_\theta$ be a sequence such that $u_\ell \rightarrow u$ in $\cH_\nu^\opp$. It follows from Proposition~\ref{prop:Laplacian.elliptic-regularity} that $\Delta_{h,\nu}$ maps continuously $\cH_\nu^\opp=\cH_\theta^{(0)}$ to $\cH_\theta^{(-2)}$. Thus, $\Delta_{h,\nu}u_\ell \rightarrow \Delta_{h,\nu}u$ in $\cH_\theta^{(-2)}$, and hence $\Delta_{h,\nu}u_\ell \rightarrow \Delta_{h,\nu}u$ in $\cA_\theta'$. 

Let $v\in \cA_\theta'$, and set $\tilde{v}=\nu^{-1}v^*$. Then by~(\ref{eq:Laplacian.pairing-inner-nu-opp}) and~(\ref{eq:Laplacian.selfadjoint}) we have
\begin{equation*}
 \acou{\Delta_{h,\nu}u_\ell}{v}= \scal{\Delta_{h,\nu}u_\ell}{\tilde{v}}_\nu^\opp = \scal{u_\ell}{\Delta_{h,\nu}\tilde{v}}_\nu^\opp \longrightarrow  
 \scal{u}{\Delta_{h,\nu}\tilde{v}}_\nu^\opp. 
\end{equation*}
 Note that as $u \in \dom(\Delta_{h,\nu}^*)$, we have $\scalt{u}{\Delta_{h,\nu}\tilde{v}}_\nu^\opp=  \scalt{\Delta_{h,\nu}^*u}{\tilde{v}}_\nu^\opp=\acout{\Delta_{h,\nu}^*u}{v}$. Therefore, we see that $\acout{\Delta_{h,\nu}u_\ell}{v}\rightarrow \acout{\Delta_{h,\nu}^*u}{v}$ for all $v\in \cA_\theta$, i.e., $\Delta_{h,\nu}u_\ell \rightarrow \Delta_{h,\nu}^*u$ weakly in $\cA_\theta'$. As  we also know that $\Delta_{h,\nu}u_\ell \rightarrow \Delta_{h,\nu}u$  strongly in $\cA_\theta'$, we deduce that $\Delta_{h,\nu}u= \Delta_{h,\nu}^*u$. In particular, $\Delta_{h,\nu}u$ is contained in $\cH_\nu^\opp=\cH_\theta^{(0)}$, and so by~(\ref{eq:Laplacian.elliptic-regularity-Sobolev})  $u$ must be contained in $\cH_\theta^{(2)}$. 
 This shows that $\dom(\Delta_{h,\nu}^*)\subset \cH_\theta^{(2)}=\dom(\Delta_{h,\nu})$. As $\Delta_{h,\nu} \subset \Delta_{h,\nu}^*$, it then follows that $\Delta_{h,\nu} =\Delta_{h,\nu}^*$, i.e., $\Delta_{h,\nu}$ is selfadjoint. 
 
 Recall that $\Delta_{h,\nu}$ is an elliptic differential operator. Moreover,  the very fact that  $\Delta_{h,\nu}$ is selfadjoint implies that its spectrum is real. In particular, $\Sp( \Delta_{h,\nu})\neq \C$. It then follows from the results of~\cite{HLP:Part2} that $\Sp( \Delta_{h,\nu})$ is an unbounded discrete set that consists of eigenvalues with finite multiplicity. Moreover, all its eigenspaces are contained in $\cA_\theta$. We further observe that (\ref{eq:Laplacian.positivity}) implies that $\scal{\Delta_{h,\nu}u}{u}_\nu^\opp\geq 0$ for all $u\in \cA_\theta$. Thus, $\Delta_{h,\nu}$ cannot have negative eigenvalues, and so its spectrum must be contained in $[0,\infty)$. The proof is complete. 
\end{proof}

\begin{proposition}\label{prop:Laplacian.nullspace}
 We have
\begin{equation*}
 \ker \Delta_{h,\nu} = \ker d =\C. 
\end{equation*}
\end{proposition}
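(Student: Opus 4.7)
The plan is to reduce $\ker\Delta_{h,\nu}$ to $\ker d$ by exploiting the formal selfadjointness identity from Proposition~\ref{prop:Laplacian.positivity-ellipticity}(1), namely
\[
\scal{\Delta_{h,\nu}u}{u}_\nu^\opp = \scal{du}{du}_{h,\nu}^\opp \qquad \text{for all $u \in \cA_\theta$}.
\]
Then the known equality $\ker d = \C$ from Proposition~\ref{prop:1-forms.properties-differential}(2) finishes the job.

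First I would dispose of the easy inclusion $\ker d \subseteq \ker \Delta_{h,\nu}$: if $du=0$, then by the very definition of the Laplace-Beltrami operator in~(\ref{eq:Laplacian.definition}), $\Delta_{h,\nu}u = -\delta(du) = 0$. For the converse inclusion, take $u \in \ker\Delta_{h,\nu}$. A priori $u$ might be taken in the Sobolev domain $\cH_\theta^{(2)}$ on which $\Delta_{h,\nu}$ is selfadjoint (Proposition~\ref{prop:Laplacian.spectral-theory}); but since $\Delta_{h,\nu}u = 0 \in \cA_\theta$, the hypoellipticity statement in Proposition~\ref{prop:Laplacian.elliptic-regularity}(3) upgrades $u$ to an element of $\cA_\theta$. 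The displayed identity then gives $\scal{du}{du}_{h,\nu}^\opp = 0$, and positive definiteness of the pre-inner product $\scal{\cdot}{\cdot}_{h,\nu}^\opp$ on $\Omega_\theta^1$ forces $du=0$, i.e.\ $u\in\ker d$.

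Combining the two inclusions with $\ker d=\C$ yields the claim. The only conceptual subtlety is the domain on which $\ker \Delta_{h,\nu}$ is interpreted; this is handled cleanly by the hypoellipticity result already established, so I do not anticipate any real obstacle. Everything else is a direct reading of the energy identity and the positivity of the relevant inner product.
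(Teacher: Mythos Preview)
Your proof is correct and follows essentially the same argument as the paper: both directions use the definition $\Delta_{h,\nu}=-\delta d$ for the easy inclusion and the energy identity $\scal{\Delta_{h,\nu}u}{u}_\nu^\opp=\scal{du}{du}_{h,\nu}^\opp$ together with smoothness of kernel elements for the reverse inclusion. The only cosmetic difference is that the paper cites Proposition~\ref{prop:Laplacian.spectral-theory}(3) (eigenspaces lie in $\cA_\theta$) rather than hypoellipticity to guarantee $u\in\cA_\theta$, but this is the same point.
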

\begin{proof}
 We know by Proposition~\ref{prop:1-forms.properties-differential} and Proposition~\ref{prop:Laplacian.spectral-theory} that $\ker d =\C$ and $\ker \Delta_{h,\nu}\subset \cA_\theta$. Moreover, if $du=0$, then by~(\ref{eq:Laplacian.definition}) we have $\Delta_{h,\nu}u = -\delta(du)=0$. Thus, $\ker d \subset \ker \Delta_{h,\nu}$. In addition,  if $u \in \ker \Delta_{h,\nu}$, then $u\in \cA_\theta $, and by~(\ref{eq:Laplacian.positivity}) we have 
\begin{equation*}
 \scal{du}{du}_{h,\nu}^\opp= \scal{\Delta_{h,\nu}u}{u}^\opp=0.
\end{equation*}
As $ \scal{\cdot}{\cdot}_{h,\nu}^\opp$ is an inner product, it then follows that $du=0$. Thus,  $\ker \Delta_{h,\nu}\subset \ker d$. It then follows that $\ker \Delta_{h,\nu}=\ker d=\C$. The proof is complete.    
\end{proof}

By Proposition~\ref{prop:Laplacian.spectral-theory} the spectrum of $\Delta_{h,\nu}$ is an unbounded discrete set consisting of non-negative eigenvalues with finite multiplicities. Moreover, by Proposition~\ref{prop:Laplacian.nullspace} the zero-eigenvalue has multiplicity 1. Therefore, we can organize the spectrum of $\Delta_{h,\nu}$ as a non-decreasing sequence, 
\begin{equation*}
 0=\lambda_0( \Delta_{h,\nu}) < \lambda_1( \Delta_{h,\nu})\leq \lambda_2( \Delta_{h,\nu})\leq \cdots \leq \lambda_j(\Delta_{h,\nu}) \leq \cdots, 
\end{equation*}
where each eigenvalue is repeated according to multiplicity. The unboundedness of $\Sp( \Delta_{h,\nu})$ implies that $\lambda_\ell( \Delta_{h,\nu})\rightarrow \infty$ as $\ell \rightarrow \infty$. Moreover, as $\Delta_{h,\nu}$ is selfadjoint and all its eigenspaces are contained in $\cA_\theta$ we obtain the following result. 

\begin{proposition}\label{prop:Laplacian.eigenbasis}
 There is an orthonormal basis $(e_\ell)_{\ell\geq 0}$ of $\cH_\nu^\opp$ such that, for all $\ell\geq 0$, we have
\begin{equation*}
 \Delta_{h,\nu} e_\ell =  \lambda_\ell( \Delta_{h,\nu})e_\ell \qquad \text{and} \qquad e_\ell \in \cA_\theta. 
\end{equation*}
\end{proposition}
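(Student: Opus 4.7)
The plan is to derive the proposition from the classical spectral theorem for compact selfadjoint operators, applied to the resolvent of $\Delta_{h,\nu}$. All the required analytic ingredients are already supplied by the preceding propositions: the selfadjointness and non-negative discrete character of the spectrum (Proposition~\ref{prop:Laplacian.spectral-theory}), the elliptic regularity statements (Proposition~\ref{prop:Laplacian.elliptic-regularity}), and the compactness of the Sobolev embedding $\cH_\theta^{(2)}\hookrightarrow \cH_\theta^{(0)}=\cH_\nu^\opp$ recalled just before.

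The first step is to introduce the resolvent $R:=(\Delta_{h,\nu}+I)^{-1}$. Since $\Delta_{h,\nu}$ is selfadjoint and $\Sp(\Delta_{h,\nu})\subset[0,\infty)$, the point $-1$ lies in its resolvent set, so $\Delta_{h,\nu}+I$ is a topological bijection from $\cH_\theta^{(2)}=\dom(\Delta_{h,\nu})$ onto $\cH_\nu^\opp$. Combined with~(\ref{eq:Laplacian.elliptic-regularity-Sobolev}) this shows that $R$ maps $\cH_\nu^\opp$ continuously into $\cH_\theta^{(2)}$. Composing with the compact inclusion $\cH_\theta^{(2)}\hookrightarrow \cH_\nu^\opp$ then shows that $R$ is a compact operator on $\cH_\nu^\opp$; its selfadjointness follows from that of $\Delta_{h,\nu}+I$.

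The second step is to invoke the spectral theorem for compact selfadjoint operators on the Hilbert space $\cH_\nu^\opp$. It yields an orthonormal basis $(e_\ell)_{\ell\geq 0}$ of $\cH_\nu^\opp$ consisting of eigenvectors $Re_\ell=\mu_\ell e_\ell$. The injectivity of $R$ (equivalent to $-1\notin\Sp(\Delta_{h,\nu})$) forces every $\mu_\ell$ to be non-zero, so $e_\ell=\mu_\ell^{-1}Re_\ell\in \ran R\subset \cH_\theta^{(2)}=\dom(\Delta_{h,\nu})$, and $\Delta_{h,\nu}e_\ell=(\mu_\ell^{-1}-1)e_\ell$. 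By Proposition~\ref{prop:Laplacian.spectral-theory}(3), each eigenspace of $\Delta_{h,\nu}$ is finite-dimensional and contained in $\cA_\theta$, and so every $e_\ell$ lies in $\cA_\theta$. A final permutation of the basis, arranging the eigenvalues $\mu_\ell^{-1}-1$ in non-decreasing order and grouping together vectors sharing the same eigenvalue, produces a basis that satisfies $\Delta_{h,\nu}e_\ell=\lambda_\ell(\Delta_{h,\nu})e_\ell$ as required.

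I do not anticipate any substantial obstacle: essentially all the hard analytic work has been done in the earlier propositions, and the conclusion is a routine consequence of compact-resolvent theory once $R$ has been shown to be well-defined and compact. The only point that requires a brief verification is the identification of the abstract operator-theoretic domain $\dom(\Delta_{h,\nu})$ with $\cH_\theta^{(2)}$, which is already part of Proposition~\ref{prop:Laplacian.spectral-theory}(1).
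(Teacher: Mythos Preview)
Your proposal is correct and, in fact, supplies considerably more detail than the paper itself: the paper states the proposition without proof, preceded only by the remark that ``as $\Delta_{h,\nu}$ is selfadjoint and all its eigenspaces are contained in $\cA_\theta$ we obtain the following result.'' Your compact-resolvent argument via $R=(\Delta_{h,\nu}+I)^{-1}$ is the standard way to make that remark rigorous, and every step you outline is justified by the propositions you cite.
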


\begin{remark}[\cite{Po:Weyl}]\label{rmk:Laplacian.Weyl} 
 We have a Weyl law for the distribution of the eigenvalues $ \lambda_\ell( \Delta_{h,\nu})$. Namely, 
\begin{equation}
  \lambda_\ell( \Delta_{h,\nu}) \sim  \left( \frac{\ell}{c_n(h)}\right)^{\frac2{n}} \qquad \text{as $\ell \rightarrow \infty$}, 
  \label{eq:Laplacian.Weyl_Law}
\end{equation}
where we have set
\begin{equation*}
 c_n(h) = \frac{1}{n} \int_{\mathbb{S}^{n-1}} \tau\left[ |\xi|_{h}^{-n}\right]d^{n-1}\xi, \qquad |\xi|_h : =\sqrt{(\xi,\xi)_{h^{-1}}}= \sqrt{\sum \xi_i h^{ij}\xi_j}. 
\end{equation*}
In fact, when $g$ is a self-compatible Riemannian metric, we have 
\begin{equation*}
 c_n(g)= \frac{1}{n} (2\pi)^{-n}|\mathbb{S}^{n-1}| \Vol_g(\cA_\theta)= (2\pi)^{-n} |\mathbb{B}^n| \Vol_g(\cA_\theta). 
\end{equation*}
Thus, in this case we have the usual form of Weyl's law. In particular, when $n=2$ and $g$ is a conformal deformation of the Euclidean metric this agrees with the Weyl Law of~\cite{FK:LMP13}. We refer to~\cite{Po:Weyl} for a more detailed account, including local and microlocal versions of the Weyl 
law~(\ref{eq:Laplacian.Weyl_Law}).  
\end{remark}

\end{document}